\newtheorem{Lemma}{Lemma}[section]
\newtheorem{Theorem}{Theorem}[section]
\newtheorem{Definition}{Definition}[section]
\newtheorem{Proposition}{Proposition}[section]
\newtheorem{Remark}{Remark}[section]
\numberwithin{equation}{section} \allowdisplaybreaks
\def\bega{\begin{array}}
\def\enda{\end{array}}
\def\begi{\begin{itemize}}
\def\endi{\end{itemize}}
\def\be{\begin{equation}}
\def\beq{\begin{equation}}
\def\bel{\begin{equation}\label}
\def\eeq{\end{equation}}
\newcommand{\LC}{\left(}
\newcommand{\RC}{\right)}
\newcommand{\bea}{\begin{eqnarray}}
\newcommand{\eea}{\end{eqnarray}}
\newcommand{\beann}{\begin{eqnarray*}}
\newcommand{\eeann}{\end{eqnarray*}}
\begin{document}
\vskip 0.2cm

\title[Finsler type Lipschitz metric for a wave equation]{\bf  A Finsler type Lipschitz optimal transport metric for a quasilinear wave equation}

\author[H. Cai]{Hong Cai}
\address{Hong Cai \newline
Department of Mathematics and Research Institute for Mathematics and Interdisciplinary Sciences, Qingdao University of Science and Technology, Qingdao, Shandong, P.R. China, 266061.}
\email{caihong19890418@163.com}

\author[G. Chen]{Geng Chen}
\address{Geng Chen \newline
Department of Mathematics, University of Kansas, Lawrence, KS 66045, USA.}
\email{gengchen@ku.edu}

\author[Y. Shen]{Yannan Shen}
\address{Yannan Shen \newline
Department of Mathematics, University of Kansas, Lawrence, KS 66045, USA.}
\email{yshen@ku.edu}


\maketitle

\begin{abstract}
{\small We consider the global well-posedness of weak energy conservative solution to a general quasilinear wave equation through variational principle, where the solution may form finite time cusp singularity, when energy concentrates. As a main result in this paper, we construct a Finsler type optimal transport metric, then prove that the solution flow is Lipschitz under this metric.
We also prove a generic regularity result by applying Thom's transversality theorem, then find piecewise smooth transportation paths among a dense set of solutions. The results in this paper are for large data solutions, without restriction on the size of solutions.
}
 \bigbreak
\noindent

{\bf \normalsize Keywords.} {\small Variational wave equations;  Generic regularity; Lipschitz metric; Optimal transport; Conservative solutions.}

\end{abstract}

\tableofcontents

\section{Introduction}
Consider a class of quasilinear wave equations derived from a variational principle whose action is a quadratic function of derivatives of the field with coefficients depending on both the field and independent variables
\begin{equation}\label{field}
\delta\int A_{\mu\nu}^{ij}(\mathbf{x},u)\frac{\partial u^\mu}{\partial x_i}\frac{\partial u^\nu}{\partial x_j}d\mathbf{x}=0,
\end{equation}
where we use the summation convention, see \cite{AH2007}.
Here ${\bf x}\in \mathbb{R}^{d+1}$ are the space-time variables and $u:\mathbb{R}^{d+1}\to R^n$ are the dependent variables. We assume the coefficients $A_{\mu\nu}^{ij}:\mathbb{R}^{d+1}\times \mathbb{R}^n\to \mathbb{R}$ are smooth and satisfy $A_{\mu\nu}^{ij}=A_{\nu\mu}^{ij}=A_{\mu\nu}^{ji}$.
The Euler-Lagrange equations associated with \eqref{field} are
\begin{equation}\label{EulerL}
\frac{\partial}{\partial x_i}\Big(A_{k\mu}^{ij}\frac{\partial u^\mu}{\partial x_j}\Big)=\frac{1}{2}\frac{\partial A_{\mu\nu}^{ij}}{\partial u^k}\frac{\partial u^\mu}{\partial x_i}\frac{\partial u^\nu}{\partial x_j}.
\end{equation}

In this paper, we consider the special case of \eqref{field} when  $n=1$ and $d=1$, where the Euler-Lagrange equation \eqref{EulerL} reads that
\begin{equation}\label{EL}
(A^{11}u_t+A^{12}u_x)_t+(A^{12}u_t+A^{22}u_x)_x=\frac{1}{2}
\big(\frac{\partial A^{11}}{\partial u}u_t^2+2\frac{\partial A^{12}}{\partial u}u_t u_x+\frac{\partial A^{22}}{\partial u}u_x^2\big).
\end{equation}
Moreover, assume the coefficients satisfy
\begin{equation*}
(A^{ij})_{2\times 2}=\left(
  \begin{array}{cc}
   \alpha^2 & \beta \\
   \beta & -\gamma^2 \\
\end{array}
\right)(x,u),
\end{equation*}
then equation \eqref{EL} exactly gives the following nonlinear variational wave equation,
\begin{equation}\label{vwl}
(\alpha^2 u_t+\beta u_x)_t+(\beta u_t-\gamma^2 u_x)_x=\alpha\alpha_u u_t^2+\beta_u u_tu_x-\gamma\gamma_u u_x^2,
\end{equation}
with initial data
\begin{equation}\label{ID}
u(x,0)=u_0(x)\in H^1,\quad
u_t(x,0)=u_1(x)\in L^2.
\end{equation}
Here the variable $t\geq 0$ is time, and $x$ is the spatial coordinate. The coefficients $\alpha=\alpha(x,u),\beta=\beta(x,u),\gamma=\gamma(x,u)$ are smooth functions on $x$ and $u$, satisfying that, there exist positive constants $\alpha_1,\alpha_2,\beta_2,\gamma_1$ and $\gamma_2$, such that for any $z=(x,u)$,
\begin{equation}\label{con}
\begin{cases}
0<\alpha_1\leq \alpha(z)\leq\alpha_2,~~|\beta(x,u)|\leq \beta_2,~~0<\gamma_1\leq\gamma(z)\leq\gamma_2,\\
\displaystyle\sup_z\{|\nabla\alpha(z)|,|\nabla\beta(z)|,|\nabla\gamma(z)|\}<\infty,~~\forall z\in\mathbb{R}^2.
\end{cases}\end{equation}
Moreover, in this paper we always assume that the following generic condition is satisfied
\begin{equation}\label{gencon}
\partial_u \lambda_\pm(x,u)=0\Rightarrow\partial_{uu}\lambda_\pm(x,u)\neq0 \quad{\rm or}\quad \partial_{ux}\lambda_\pm(x,u)\neq0.
\end{equation}
Then system \eqref{vwl} is strictly hyperbolic with two eigenvalues
\begin{equation}\label{lambda}
\lambda_{-}:=\frac{\beta-\sqrt{\beta^2+\alpha^2\gamma^2} }{\alpha^2}<0,\qquad
\lambda_{+}:=\frac{\beta+\sqrt{\beta^2+\alpha^2\gamma^2} }{\alpha^2}>0.
\end{equation}
In this paper, we will always call waves in the families of $\lambda_-$ and $\lambda_+$ as backward and forward waves, respectively.
By \eqref{con}, $-\lambda_{-}(x,u)$ and $\lambda_{+}(x,u)$ are both smooth on $x$ and $u$, bounded and uniformly positive.

Solutions of \eqref{vwl}-\eqref{ID} may form finite time cusp singularity, see examples in \cite{BC,BHY,GHZ}. The existence and uniqueness of global-in-time energy conservative H\"older continuous (weak) solution have been established by Hu in \cite{H} and the authors in \cite{CCDS}, respectively. In this paper, we address the Lipschitz continuous dependence and generic regularity of solution.

\subsection{Physical background}

Let's first introduce various physical models related to equations  \eqref{field} and \eqref{vwl}.

\paragraph{\bf Variational wave equation}
A particular physical example leading to \eqref{field} and \eqref{vwl} is the motion of a massive director field in a nematic liquid crystal. A nematic liquid crystal can be described by a director field of unit vectors $\mathbf{n}\in\mathbb{S}^2$ describing the orientation of rod-like molecules.
In the regime in which inertia effects
dominate viscosity, the propagation of orientation waves in the director field is modeled by the least action principle (see \cite{AH2009})
\begin{equation}\label{ofdelta}
\delta\int\Big(\partial_t \mathbf{n}\cdot \partial_t \mathbf{n}-W(\mathbf{n},\nabla\mathbf{n}) \Big)\,d\mathbf{x}dt=0,\quad \mathbf{n}\cdot\mathbf{n}=1,
\end{equation}
where
\begin{equation}\label{O_F}
W(\mathbf{n},\nabla\mathbf{n})=K_1|\mathbf{n}\times(\nabla\times\mathbf{n})|^2
+K_2(\nabla \cdot \mathbf{n})^2+K_3(\mathbf{n}\cdot\nabla\times\mathbf{n})^2
\end{equation}
is the well-known Oseen-Franck potential energy density.
Here $K_1, K_2$ and $K_3$ are positive elastic constants. This variational principle is in the form of \eqref{field}.

The study of \eqref{ofdelta} starts from a simplest case consisting of planar deformations depending on a single space variable $x\in \mathbb R$, i.e. when
$
\mathbf{n}=(\cos u(x,t),\sin u(x,t),0).
$ In this case,
the functional $W(\mathbf{n},\nabla\mathbf{n})$ vastly simplifies to $W(\mathbf{n},\nabla\mathbf{n})=(K_1\cos^2u+K_2 \sin^2u)u_x^2$.
Then the dynamics are described by the variational principle
\begin{equation*}
\delta \int(u_t^2-c^2(u)u_x^2)\,dx\,dt=0,
\end{equation*}
with the wave speed $c$ given by $c^2(u)=K_1\cos^2 u+K_2\sin^2 u$.
Thus, the Euler-Lagrange equation for this variational principle results the variational wave equation
\begin{equation}\label{vwe}
u_{tt}-c(u)(c(u)u_x)_x=0.
\end{equation}
It is known that solutions for the initial value problem of \eqref{vwe} generically have finite time cusp singularity \cite{BC,BHY,GHZ}.
The global existence of H\"older continuous energy conservative solution was established by Bressan and Zheng in \cite{BZ}. To select a unique solution after singularity formation, one needs to add an additional admissible condition, such as the energy conservative condition. In \cite{BCZ}, uniqueness of energy conservative solution has been established by Bressan, Chen and Zhang. Later, these results have been extended to \eqref{ofdelta} in \cite{CCD,CZZ,ZZ10,ZZ11}. Also see other existence results for \eqref{vwe}: existence of conservative solution for more general initial data in \cite{HR}; and existence of dissipative solution with monotonic $c(\cdot)$ in \cite{BH,ZZ03}. 

The breakthrough on the Lipschitz continuous dependence happened later in \cite{BC2015} by Bressan and Chen, where the solution flow was proved to be Lipschitz continuous on a new Finsler type optimal transport metric. In fact, the solution flow fails to be Lipschitz continuous under existing metrics, such as the Sobolev metric or Wasserstein metric.

The main target of this paper is to extend this Lipschitz continuous dependence result to a much more general equation \eqref{vwl}. Here, the equation \eqref{vwl} is a general quasilinear wave equation with various physical backgrounds (see \cite{AH2007}), including \eqref{vwe} as an example, when $\alpha=1, \beta=0$ and $\gamma=c(u)$.

\bigskip

\paragraph{\bf The $O(3)~\sigma$-Model}
Another background model we like to introduce is the $O(3)~\sigma$-model.
In fact, one motivation to consider \eqref{vwl} in this paper is for the future study of multi-d solutions of \eqref{field} with radial symmetry, which is related to the $O(3)~\sigma$-model.

One of the simplest nontrivial models of quantum field theory is based on the $(2+1)$-dimensional Lorentz invariant $O(3)~\sigma$-model. This model is also known as the wave map flow from the $(2+1)$-dimensional Minkowski space to the sphere $\mathbb{S}^2\subset\mathbb{R}^3$: for a wave map $\Phi:\mathbb{R}^{2+1}\mapsto \mathbb{S}^2$ with the Lagrangian density
$$\mathcal{L}[\Phi]=\frac{1}{2}\partial_\alpha\Phi\cdot\partial_\alpha\Phi ~m^{\alpha \beta},$$
where $m^{\alpha \beta}$ is the Minkowski metric, see \cite{RS}. The Euler-Lagrange equations are given by
\begin{equation}\label{ele}
\Box\Phi=-\Phi(\partial^\alpha\cdot\partial_\alpha\Phi).
\end{equation}
Here, we are particularly interested in solutions with the following symmetry.
Consider solutions with $\lambda$-equivariant symmetry, or $\lambda$-corotational, which correspond to equivariant maps that in local coordinates take the form
$$\Phi(r,t,\theta)=(u(r,t),\lambda\theta)\hookrightarrow(\sin u\cos \lambda\theta, \sin u\sin \lambda\theta, \cos u)\in\mathbb{S}^2,$$
where $u$ is the colatitude measured from the north pole of the sphere and the metric on $\mathbb{S}^2$ is given by  $ds^2=du^2+\sin^2 u\, d\theta^2$. In this case \eqref{ele} is reduced to the following scalar equation
\beq\label{o3s}
u_{tt}=u_{rr}+\frac{1}{r}u_r-\lambda^2\frac{\sin(2u)}{2r^2}
\eeq
with $\lambda\in \mathbb{N}^+$. This equation has various physical backgrounds including general relativity and Yang-Mills field \cite{RS}.
There are many intensive deep  studies on \eqref{o3s}, for example the singularity formaiton example in \cite{RS} with $\lambda\geq 4$ and some global existence result in \cite{JL} with $\lambda\geq 2$. For more references, we refer readers to the review in the introduction of \cite{DG} and references of
\cite{JL,RS}.

The general equation \eqref{field} includes a quasilinear version of $O(3)~\sigma$-model.
Since we only consider the 1-d case of \eqref{field}, it seems unrelated to the $O(3)~\sigma$-model. Now we explain our motivation.

First, in general, in order to study the quasilinear wave equations \eqref{field} in multiple space dimension with symmetry, it is reasonable to start building mathematical tools for the 1-d case. We believe that this preparation will be very helpful later for the multi-d case with symmetry.

Secondly, in the next part, we will introduce a new observation found by the second author showing the relation between full Poiseuille flow of nematic liquid crystals via Erickson-Leslie model and the $O(3)~\sigma$-model \eqref{o3s}.
\bigskip

\paragraph{\bf Poiseuille flow of nematic liquid crystal via Erickson-Leslie model and \eqref{o3s}}
To get a more physical relevant model for nematic liquid crystals than \eqref{ofdelta}, one needs to consider a coupled system describing both ``liquid'' and ``crystal'' properties, such as the  Erickson-Leslie model, see \cite{L}.
We consider the flow of a liquid-crystal due to a pressure gradient in a stationary capillary tube of radius $R$. The Poiseuille laminar flow concerns  solutions of the form, in terms of cylindrical coordinates $(r,\phi,z)$ of the tube,
 \[{\bf v}=(v_{r}, v_{\phi}, v_z)=(0,0,v(r,t)),\quad {\bf n}=(n_r,n_{\phi},n_z)=(\sin u(r,t), 0, \cos u(r,t)),\]
where $u$ is the angle between ${\bf n}$ and the positive $z$-axis.
Now this model  via Erickson-Leslie model is
\begin{align}\label{time_poi}\begin{split}
\rho  v_t=&a+\frac{1}{r}\Big(rg(u)v_r+rh(u) u_t\Big)_r,\\
\sigma  u_{tt}+\gamma_1 u_t=&c( u)(c(u) u_r)_r+\frac{c^2( u)}{r} u_r
-\frac{ K_1\sin(2 u)}{2r^2}-\frac{\gamma_1+\gamma_2\cos(2 u) }{2}v_r,
\end{split}
\end{align}
where the constant $a$ is the gradient of pressure along the $z$-axis, and
 \begin{align}\label{fgh}\begin{split}
 g( u)=&\alpha_1\sin^2 u\cos^2 u+\frac{\alpha_5-\alpha_2}{2}\sin^2\theta+\frac{\alpha_3+\alpha_6}{2}\cos^2 u+\frac{\alpha_4}{2},\\
c^2( u)=&K_1\cos^2 u+K_3\sin^2 u,\quad h( u)=\alpha_3\cos^2 u-\alpha_2\sin^2 u.
 \end{split}
 \end{align}
Here $K_1,K_3$ are defined in \eqref{O_F}; $\alpha_j$'s are the Lesile's parameters satisfying some  empirical relations (p.13, \cite{L}). Here $\rho,\sigma$ are positive constants, and $\gamma_1$, $\gamma_2$ are nonnegative constants. See \cite{L} for more details.

For example, when $K_1=K_3=1$, it is easy to see that $c(u)=1$. So $
\eqref{time_poi}_2$ becomes semilinear on $u$.
Without loss of generality, we can choose $\sigma=1$. One exactly gets \eqref{o3s} with $\lambda=1$, by
further assuming $\gamma_1=\gamma_2=0$ on $
\eqref{time_poi}_2$. Other models for nematic Liquid crystals with
$\lambda>1$ and $\lambda\in\mathbb{N}^+$ can also been established.

In general, the main part of $
\eqref{time_poi}_2$ on $u$ can be considered as a quasilinear version of
\eqref{o3s}. The understanding of the wave part in $
\eqref{time_poi}_2$  is vital in studying \eqref{time_poi}.

In a recent paper \cite{CHL}, the cusp singularity formation and global existence of H\"older continuous solution for the 1-d model
of \eqref{time_poi} has been established. By 1-d model we mean
\begin{align}\label{time_poi2}
\begin{split}
 \rho v_t=&a+\Big(g(u)v_x+h(u)u_t\Big)_x,\\
\nu u_{tt}+\gamma_1 u_t=&c(u)(c(u)u_{x})_x
 -h(u)v_x,
\end{split}
\end{align}
whose solution is the solution of  Erickson-Leslie model satisfying the following symmetry:
\[{\bf v}=(0,0,v(x,t))\quad {\bf n}=(\sin u(x,t), 0, \cos u(x,t)),\]
where $(x,t)\in \mathbb R\times \mathbb R^+$.
The existence framework in \cite{BZ} for the variational wave equation \eqref{vwe} serves as the basis for the existence result in \cite{CHL}  for \eqref{time_poi2}, because the major wave parts on $u$ in these two equations are same. Furthermore, the recent study shows that this 1-d framework will very likely direct to some interesting results for the exterior problem of \eqref{time_poi} out of a small cylinder including the center line.

Using a similar strategy, we like to first study the 1-d equation \eqref{vwl} in this paper, priori to the future study on quasilinear wave equations \eqref{field} in the multi-d case with symmetry.

\subsection{Main results of this paper}
The solution of  \eqref{vwl}--\eqref{ID} generically forms finite time cusp singularities, due to the quasilinear structure in the equation \cite{BC, BHY,GHZ}. In general one can only consider weak solutions.

\begin{Definition}[Weak solution]\label{weakdef}
The function $u=u(x,t)$, defined for all $(x,t)\in\mathbb{R}\times\mathbb{R}^+$, is a {\bf weak solution} to the Cauchy problem \eqref{vwl}--\eqref{ID} if it satisfies following conditions.
\begin{itemize}
\item[(i)] In the $x$-$t$ plane, the function $u(x,t)$ is locally H\"older continuous
with exponent $1/2$.  The function $t\mapsto u(\cdot,t)$ is
continuously differentiable as a map with values in $L^p_{\rm
loc}$, for all $1\leq p<2$. Moreover, it is Lipschitz continuous with respect to
(w.r.t.)~the $L^2$ distance, that is, there exists a constant $L$ such that
\begin{equation*}
 \big\|u(\cdot,t)-u(\cdot,s)\big\|_{L^2}
 \leq L\,|t-s|,
\end{equation*}
for all $t,s\in\mathbb R^+$.
\item[(ii)] The function $u(x,t)$ takes on the initial conditions in (\ref{ID})
pointwise, while their temporal derivatives hold in  $L^p_{\rm
loc}\,$ for $p\in [1,2)\,$.
\item[(iii)] The equations (\ref{vwl}) hold in distributional sense, that is
\begin{equation*}
\int\int\big[\varphi_t(\alpha^2 u_t+\beta u_x)+\varphi_x(\beta u_t-\gamma^2 u_x)+\varphi(\alpha \alpha_u u_t^2+\beta_u u_t u_x-\gamma\gamma_u u_x^2)\big]\,dx\,dt=0
\end{equation*}
for any test function $\varphi\in
C^1_c(\mathbb R\times \mathbb R^+)$.
\end{itemize}
\end{Definition}

The existence of weak solution, defined above, has been established by \cite{H}, where the main idea is to study a semi-linear system under characteristic coordinates. This method was first used in \cite{BZ} for the variational wave equation \eqref{vwe}.
To select a unique solution after singularity formation, one needs to add an additional admissible condition, such as the energy conservative condition whose definition will be given in Theorem \ref{thm_ec}.
The uniqueness of energy weak solution satisfying the energy conservative condition, i.e. the uniqueness of conservative solution, was recently proved by the authors in \cite{CCDS}. We will review the existence and uniqueness results in the next section.

In this paper, we consider the stability of conservative solution. Due to finite time energy concentration, occurring when gradient blowup forms, the solution flow is not Lipschitz in the energy space, i.e. $H^1$ space. We will construct a Finsler-type distance that renders the conservative solution flows of \eqref{vwl}--\eqref{ID} Lipschitz continuous. The new distance will be determined by the minimum cost to transport  from one solution to the other. We consider a double optimal transportation problem which can equip the metric with information on the quasilinear  structure of the wave equation. More precisely, we consider the propagation of forward and backward waves, respectively, and their interactions. To control the energy transfer between two directions, we add a wave potential capturing future wave interactions in the metric.

The main result of this paper is

\begin{Theorem}\label{thm_metric}
We consider the unique conservative solution given in Theorems \eqref{thm_ex} and \eqref{thm_ec} for \eqref{vwl}-\eqref{ID}.
 Let the conditions \eqref{con}--\eqref{gencon}  be satisfied,
 then the geodesic distance $d$, defined in Definition \ref{def_weak}, provides solution flow the following Lipschitz continuous property. Consider two
initial data $(u_0,u_1)(x)$ and $(\hat{u}_0,\hat{u}_1)(x)$ in \eqref{ID}, then for any $T>0$, the corresponding solutions $u(x,t)$ and $\hat{u}(x,t)$ satisfy
\begin{equation*}
d\Big((u,u_t)(t),(\hat{u},\hat{u}_t)(t)\Big)\leq C d\Big((u_0,u_1),(\hat{u}_0,\hat{u}_1)\Big),
\end{equation*}
when $t\in[0,T]$, where the constant $C$ depends only on T and the total energy.
\end{Theorem}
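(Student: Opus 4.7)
The plan is to adapt the Finsler-type optimal transport framework of Bressan--Chen \cite{BC2015}, originally developed for the variational wave equation \eqref{vwe}, to the more general quasilinear equation \eqref{vwl}. Since the flow develops cusp singularities where $u_x, u_t$ concentrate, one cannot work in a Sobolev space. Instead, one measures distances by minimizing, over one-parameter families joining two conservative solutions, the total cost of transporting mass and energy along forward and backward characteristic directions, with a weight that encodes future wave interactions.

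First, I would reduce \eqref{vwl} to a semilinear system in characteristic coordinates, as already done in \cite{H,CCDS} for the existence and uniqueness theory. Setting Riemann-type variables $R,S$ proportional to $u_t - \lambda_- u_x$ and $u_t - \lambda_+ u_x$, together with forward and backward energy densities built from $R^2$ and $S^2$, and passing to adapted coordinates $(X,Y)$ straightening the two families of characteristics, yields a smooth semilinear system in $u, R, S, x_X, x_Y, t_X, t_Y$. In these coordinates cusp formation is encoded simply by the vanishing of a Jacobian, so all derivations can be performed rigorously on smooth data and the estimates pushed to the general case by the approximation scheme already built in \cite{H,CCDS}.

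Next, for a smooth one-parameter family of solutions $u^\theta$, I would derive the linearized variational equations in $(X,Y)$ coordinates for the tangent vector, which splits into (a) horizontal shifts of characteristic curves and (b) vertical variations of $u, R, S$. The key object is a weighted norm
\begin{equation*}
\|(v,\xi_+,\xi_-,\ldots)\|_\star = \int e^{\kappa Q(X,Y)}\Bigl(|v| + |\xi_+|\,\text{(forward energy)} + |\xi_-|\,\text{(backward energy)} + \cdots\Bigr),
\end{equation*}
where $Q$ is a wave interaction potential measuring the total mass of forward and backward energies that have yet to cross. Because $Q$ is nonincreasing along the conservative flow, a suitably large $\kappa$ forces the exponential weight to absorb the bad terms generated in the linearized equations by interactions of forward and backward waves. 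Defining $d$ as the infimum of $\int_0^1 \|\partial_\theta u^\theta\|_\star\,d\theta$ over admissible paths, one obtains a genuine geodesic distance; its Lipschitz propagation reduces to proving that the $\|\cdot\|_\star$-length of any smooth path grows at most at rate $C(T)$, which is exactly the semigroup estimate for the linearized system.

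The main obstacle will be step (a) of the length estimate: the source terms $\alpha\alpha_u u_t^2, \beta_u u_t u_x, \gamma\gamma_u u_x^2$ on the right-hand side of \eqref{vwl}, together with the $x$-dependence of $\alpha,\beta,\gamma$, produce additional couplings in the linearized equations that are absent in \cite{BC2015}. To handle them, I would exploit the uniform bounds \eqref{con} together with the generic condition \eqref{gencon} on $\lambda_\pm$ to show that these extra contributions are either lower order (controlled by the $u$-component of the tangent vector and a standard Gronwall) or themselves interaction-type terms absorbed by the exponential weight $e^{\kappa Q}$ after possibly enlarging $\kappa$. Once this is established for smooth data, the standard approximation and closure arguments of \cite{H,CCDS} transfer the bound to arbitrary conservative solutions, and the density of piecewise smooth transportation paths (from the generic regularity result) guarantees that the infimum defining $d$ is attained by admissible paths, completing the proof.
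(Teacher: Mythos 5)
Your overall strategy is aligned with the paper's: pass to characteristic coordinates, linearize along a one-parameter family, introduce an interaction-potential weight, prove a Gronwall estimate for smooth solutions, use generic regularity to supply admissible paths, and close by density. But the mechanism by which the paper actually controls the linearized flow is more delicate than what you describe, and the gap is in exactly the place you flag as the ``main obstacle.''

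The paper's Finsler norm is not a single weighted integral with an exponential weight $e^{\kappa Q}$. It is a sum of \emph{seven} structurally distinct integrals $I_0,\dots,I_6$ (see \eqref{norm1}), each with its own multiplicative constant $\kappa_k$ and its own bounded interaction weight $\mathcal{W}^\pm = 1+\int S^2$ (respectively $1+\int R^2$). The reason a single ``suitably large $\kappa$'' does not work is the following: when one computes $\frac{d}{dt}I_k$, the decay of $\mathcal{W}^\pm$ produces a negative term $-\frac{2\gamma_1}{\alpha_2}\int S^2 J_k^-\mathcal{W}^-$ which can only absorb high-order ($\sim S^2$) growth \emph{of the same integrand $J_k^-$}. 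However the high-order growth terms in $\frac{d}{dt}I_k$ typically involve $(1+S^2)J_\ell^-$ with $\ell\neq k$ (see the index sets $\mathcal{F}_k^h$ in \eqref{I1est}--\eqref{I6est}). These cross-terms cannot be absorbed by the potential alone; they must be dominated by choosing $\kappa_\ell\gg\kappa_k$. Making such a consistent hierarchy of constants $\kappa_0\gg\kappa_1\gg\kappa_3,\kappa_4\gg\kappa_5\gg\kappa_2,\kappa_6$ possible requires verifying that the dependency graph $k\mapsto\mathcal{F}_k^h$ has no cycle (Fig.~\ref{f:wa36}). This combinatorial structure of the metric is the real content of Lemma \ref{lem_est}, and it is not reproduced by tuning one exponent.

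A second omission: the paper stresses that the relative shift terms in \eqref{rseq} and in each integrand $J_k^\pm$ (the $(w-z)$ corrections such as $\frac{2a_1(w-z)}{c_2-c_1}S$) are the ``most subtle and difficult part.'' They account for the new wave interactions produced when forward and backward waves are transported by different amounts, and they must be matched precisely against the source terms of the balance laws \eqref{balance1}; a slight misalignment destroys the Gronwall structure. Your sketch treats the horizontal shifts $w,z$ as a single object and never addresses the relative shift $w-z$, so even granting an interaction weight, the estimate \eqref{est on w and z} would not close. If you want to keep your exponential-weight formulation, you would still need to (a) decompose the norm into the distinct base measures with densities $1$, $R$, $R^2$ and the changes in $u$ and $\arctan R$, (b) identify the acyclic dependency structure among the resulting time derivatives, and (c) build the $(w-z)$ corrections into each integrand so that the interaction sources cancel.
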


This result together with the existence and uniqueness results give a fairly complete picture for the global well-posedness of H\"older continuous energy conservative solution to \eqref{vwl}--\eqref{ID}.

One crucial obstruction in establishing the new distance is how to prove the existence of regular enough transportation planes between two solutions. We overcome this issue by proving a generic regularity result.
\begin{Theorem}[Generic regularity]\label{thm_reg}
 Let the condition \eqref{con}--\eqref{gencon} be satisfied and let $T>0$ be given, then there exists an open dense set of initial data
\[\mathcal{M}\subset \Big(\mathcal{C}^3(\mathbb{R})\cap H^1(\mathbb{R})\Big)\times \Big(\mathcal{C}^2(\mathbb{R})\cap L^2(\mathbb{R})\Big),\]
such that, for $(u_{0}, u_{1})\in \mathcal{M}$, the conservative solution $u=u(x,t)$ of \eqref{vwl}--\eqref{ID} is twice continuously differentiable in the complement of finitely many characteristic curves, within the domain $ \mathbb{R}\times[0,T].$
\end{Theorem}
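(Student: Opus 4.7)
The plan is to work entirely in the energy-balance characteristic coordinate framework of [H] and [CCDS], in which \eqref{vwl} becomes a smooth semilinear system whose singularities are pinned down by the vanishing of the Jacobian of the coordinate change. Let $(X,Y)$ denote coordinates that are constant, respectively, along forward ($\lambda_+$) and backward ($\lambda_-$) characteristics. Following the existence construction, rescale the gradient variables along characteristic directions to bounded quantities $w:=2\arctan R_+$ and $z:=2\arctan R_-$, where $R_\pm$ are the directional derivatives of $u$ along the two families (normalized by an energy factor). In the $(X,Y)$ plane, the unknowns $(u,w,z,x_X,x_Y,t_X,t_Y)$ satisfy a smooth semilinear system of ODEs whose right-hand side depends smoothly on $(\alpha,\beta,\gamma)$ and which inherits initial-data regularity: for $(u_0,u_1)\in\mathcal{C}^3\times\mathcal{C}^2$, the solution $(w,z)$ is $\mathcal{C}^2$ in $(X,Y)$, depending continuously on the data.

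The crucial point is that cusp singularities of $u$ in $(x,t)$ occur exactly where the Jacobian $J(X,Y):=x_X t_Y-x_Y t_X$ vanishes, and a direct computation (as in the variational wave equation case) shows $J=c_0\cos^2(w/2)\cdot\cos^2(z/2)$ for a positive smooth factor $c_0$. Introduce $\phi_+(X,Y):=\cos(w/2)$ and $\phi_-(X,Y):=\cos(z/2)$. Then the set of $(X,Y)$ which map to $\mathcal{C}^2$-singular points of $u$ is precisely $\mathcal{S}_+\cup\mathcal{S}_-$ with $\mathcal{S}_\pm:=\{\phi_\pm=0\}$. The theorem is therefore reduced to: for generic initial data, (i) each $\mathcal{S}_\pm$ is a smooth $1$-manifold, (ii) $\mathcal{S}_+$ and $\mathcal{S}_-$ intersect transversally, and (iii) in the compact $(X,Y)$-box corresponding to the domain of determinacy of $\mathbb{R}\times[0,T]$ (using finite propagation speed from \eqref{con}), $\mathcal{S}_+\cup\mathcal{S}_-$ contains only finitely many points of intersection.

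To close the loop I will invoke Thom's multi-jet transversality theorem. View the assignment $(u_0,u_1)\mapsto(\phi_+,\phi_-)$ as a smooth map from the Banach manifold $\bigl(\mathcal{C}^3\cap H^1\bigr)\times\bigl(\mathcal{C}^2\cap L^2\bigr)$ into $\mathcal{C}^2(K)\times\mathcal{C}^2(K)$ on any compact $K\subset(X,Y)$-space. The generic condition \eqref{gencon} on $\partial_u\lambda_\pm$ guarantees genuine nonlinearity along each characteristic, and hence, at any base point $(X_0,Y_0)$, the $2$-jet evaluation $j^2(\phi_+,\phi_-)|_{(X_0,Y_0)}$ pulled back through the data is a submersion (by choosing localized perturbations of $u_1$ supported near the foot of the characteristic through $(X_0,Y_0)$ and using that $\beta^2+\alpha^2\gamma^2>0$). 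Thom's theorem then yields an open dense set of data for which $(\phi_+,0)$, $(\phi_-,0)$ and the product map $(\phi_+,\phi_-,0,0)$ are all transverse in $K$. Translating back, the preimages in $(x,t)$ of $\mathcal{S}_\pm$ are a finite union of smooth characteristic curves (with isolated tangency/crossing points), off of which $u$ is $\mathcal{C}^2$. A countable exhaustion over compact boxes $K_n$ yields the global open-dense set $\mathcal{M}$.

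The main obstacle is the submersivity step: one must verify that finite-dimensional perturbations of $(u_0,u_1)$ really can realize arbitrary $2$-jets of $(\phi_+,\phi_-)$ at a prescribed point $(X_0,Y_0)$. This is delicate because $\phi_\pm(X_0,Y_0)$ depends on the initial data nonlocally, through integration of the semilinear system along the characteristics feeding into $(X_0,Y_0)$. The idea is to linearize this flow: a compactly supported perturbation of $u_1$ propagates by the semilinear ODEs and produces a controlled variation of $(w,z)$ at $(X_0,Y_0)$; by varying the support and shape of the perturbation and exploiting \eqref{gencon} to rule out degeneracies in the transport coefficients, one obtains the needed surjectivity onto the $2$-jet. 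A secondary technicality is that the $1$-manifolds $\mathcal{S}_\pm$ could a priori accumulate at infinity inside $K$; this is ruled out by transversality plus compactness of $K$, which forces $\mathcal{S}_\pm\cap K$ to be a finite union of smooth arcs meeting $\partial K$ or closing up.
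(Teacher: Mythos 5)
Your overall framework is the same one the paper uses: pass to the semilinear system in characteristic coordinates, observe that the singular set in the $(x,t)$ plane is the image of the zero set of the Jacobian (equivalently, the zeros of $h=\cos^2(w/2)$ and $g=\cos^2(z/2)$ in the paper's notation), and invoke Thom transversality to show that generic initial data avoid bad configurations. However, there are three genuine gaps in the proposal as written.

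First, your list of genericity conditions (i)--(iii) is incomplete. Conditions (i)--(ii) enforce only that each $\mathcal{S}_\pm$ is a regular level set and that the two families cross transversally. The paper (Definition \ref{def_gensin} and the list \eqref{4.1}) additionally excludes the configuration $h=0,\ \ell_X=0,\ \ell_{XX}=0$: when $\ell_X$ vanishes on $\{h=0\}$, the point is a start/end point of a singular curve in the $(x,t)$-plane, and one must have $\ell_{XX}\neq 0$ to make such points isolated. Your smooth-$1$-manifold condition only forces $\ell_Y\neq0$ there, not $\ell_{XX}\neq0$. The paper also excludes $(h,\partial_u\lambda_-,\ell_X)=(0,0,0)$. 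This is not decorative: the transport coefficient $a_1$ is essentially $\partial_u\lambda_-$, and $\ell_Y=-\tfrac{q}{c_2-c_1}a_1 g$ on $\{h=0\}$; thus when $\partial_u\lambda_-$ vanishes, $\ell_Y$ degenerates and the regular-level-set property for $\{h=0\}$ is lost unless one can fall back on $\ell_X\neq0$. Only here does the hypothesis \eqref{gencon} enter; in your sketch its role (``guarantees genuine nonlinearity'') is asserted but not located in the argument.

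Second, the submersivity step you flag as ``the main obstacle'' is the real content of the proof and cannot be left as a remark. The paper's Lemma \ref{Lemma 3.1} resolves it by a different and cleaner perturbation scheme: rather than perturbing $(u_0,u_1)$ at $t=0$ and propagating through the semilinear flow (which, as you note, makes the dependence of $2$-jets on the data nonlocal and hard to control), it perturbs the data of the semilinear system directly along the transversal line $\gamma_\kappa=\{X+Y=X_0+Y_0\}$ through the singular point. Then $\ell_X,\ell_{XX}$ and the other relevant quantities at $(X_0,Y_0)$ are expressed explicitly as functions of boundary data and its derivatives along $\gamma_\kappa$ (formulas \eqref{lmeqn}--\eqref{3.9}), and the required Jacobians are exhibited as lower-triangular with unit diagonal, hence of full rank. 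This localizes the verification and makes the propagation issue disappear; without some analogue of this step your outline is not a proof.

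Third, you assert that the images in $(x,t)$ of $\mathcal{S}_\pm$ are finitely many smooth characteristic curves without checking that the coordinate map $\mathcal{F}:(X,Y)\mapsto(x,t)$ sends $\mathcal{S}_\pm$ to $\mathcal{C}^2$ curves. Since $D\mathcal{F}$ is singular exactly on $\mathcal{S}_\pm$, this is not automatic. The paper verifies it by observing that on $\{h=0\}$ one has $t_X=\tfrac{\alpha ph}{c_2-c_1}=0$ while $t_Y=\tfrac{\alpha qg}{c_2-c_1}>0$, so $t$ is strictly monotone along any arc of $\{h=0\}$ parametrized by $Y$, giving a $\mathcal{C}^2$ graph over $t$. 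That computation should be included.
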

Roughly speaking, we prove that, for generic smooth initial data $({u}_0,{u}_1)$, the corresponding solution is piecewise smooth in the $x$--$t$ plane, with  singularities occurring along a finite set of smooth curves. We also show the existence of regular enough paths between generic solutions, so the transport metric can be well defined on generic solutions. The generic regularity result itself is a very interesting result. It clarifies generic properties of solutions and singularities.
\bigskip

The idea of using double optimal transport metric to prove Lipschitz continuous dependence of energy conservative weak solution, including cusp singularity, was first used for the variational wave equation \eqref{vwe} by \cite{BC2015}. In this paper, we consider a much more general and complicated system, so there are many variations in the construction of the new Lipschitz metric and proofs of Lipschitz property, comparing to the one for variational wave equation.  In fact, since the metric needs to be designed according to the equation itself, for \eqref{vwl}, we need to adjust many terms used in \cite{BC2015} for \eqref{vwe} and also add some new terms, especially on the subtle relative shift terms. A slight change in the metric may ruin the Lipschitz property.

The generic regularity result has been established for the variational wave equations in \cite{BC,CCD}. In this paper, based on the method used in \cite{BC}, we prove Theorem \ref{thm_reg} for the more general system \eqref{vwl}. The calculations on both Lipschitz metric and generic regularity for this general model are considerably more complicated than those for the variational wave equation. The results in this paper are for large solutions, without restriction on the size of solutions.

This paper will be divided into  seven sections. Section 2 is a short review on the existence and uniqueness of conservative solution to \eqref{vwl}--\eqref{ID}. In Section 3,  we will introduce main ideas used in this paper, and also the structure of Sections 4 to 7, in which we construct the metric and prove main theorems.


\section{Previous existence and uniqueness results}\label{sec_rew}
\setcounter{equation}{0}
We begin, in this paper, by reviewing the existence and uniqueness of conservative weak solution to the Cauchy problem \eqref{vwl}--\eqref{ID} in \cite{CCDS,H}.

\begin{Theorem} [Existence \cite{H}] \label{thm_ex}
Let the condition \eqref{con} be satisfied, then the Cauchy problem \eqref{vwl}--\eqref{ID} admits a global weak solution $u=u(x,t)$ defined for all $(x,t)\in\mathbb{R}\times\mathbb{R}^+$.

\end{Theorem}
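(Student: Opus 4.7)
The plan is to follow the strategy used by Bressan--Zheng \cite{BZ} for \eqref{vwe} and extended by Hu \cite{H} to more general models: rewrite the quasilinear equation \eqref{vwl} as a first-order system in Riemann-type invariants, pass to energy-based characteristic coordinates in which the system becomes semilinear with globally bounded right-hand side, solve it globally there, and then invert the coordinate change to recover a weak solution in the sense of Definition \ref{weakdef}.

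First I would introduce the characteristic directional derivatives $\partial_\pm := \partial_t + \lambda_\pm \partial_x$ with $\lambda_\pm$ as in \eqref{lambda}, and define Riemann-type invariants
\begin{equation*}
R := u_t - \lambda_- u_x, \qquad S := u_t - \lambda_+ u_x.
\end{equation*}
Rewriting \eqref{vwl} in these variables produces a first-order system for $(u,R,S)$ whose evolution equations along the two characteristic families have right-hand sides that are quadratic in $(R,S)$, with coefficients depending on $(x,u)$ through $\alpha,\beta,\gamma$ and their derivatives. The uniform bounds in \eqref{con} keep all these coefficients bounded.

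Second, because $R,S$ may blow up in finite time whenever energy concentrates, I would rescale to bounded trigonometric-type unknowns
\begin{equation*}
w := 2\arctan R, \qquad z := 2\arctan S,
\end{equation*}
and introduce energy-based characteristic coordinates $(X,Y)$, with $X$ constant along backward characteristics and $Y$ along forward characteristics, normalized so that the energies carried by $R^2$ and $S^2$ are spread uniformly along the corresponding characteristics. Defining $p,q$ as suitably weighted Jacobian factors of this change of variables, a direct computation should yield a semilinear system for $(u,w,z,p,q)$ on the entire $(X,Y)$-plane with smooth, globally bounded right-hand side. This semilinear system can then be solved by a standard Picard iteration / Cauchy--Lipschitz scheme, with a Gronwall argument controlling $\|(p,q)\|_{L^\infty}$ on any bounded strip in $(X,Y)$.

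Finally I would pass back to the physical variables. The map $(X,Y)\mapsto(x,t)$ is Lipschitz and surjective, and its Jacobian vanishes only on the energy concentration set, which has zero Lebesgue measure in the $(x,t)$-plane. Defining $u(x,t)$ by composition produces a locally $1/2$-H\"older continuous function matching the regularity in Definition \ref{weakdef}; the $L^2$-Lipschitz-in-time property follows from the global energy bound, and the distributional identity (iii) can be checked by pulling back the test integral to the $(X,Y)$-plane and using the semilinear equations. The main obstacle will be the second step: unlike \eqref{vwe}, the equation \eqref{vwl} carries a nonzero cross coefficient $\beta$ and genuine $x$-dependence in all of $\alpha,\beta,\gamma$, so extra source terms appear in the evolution of $R$ and $S$ that were absent in \cite{BZ}. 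Choosing the right energy weights for $p,q$ and verifying that the transformed system is in fact semilinear will require careful book-keeping of these new terms, but the uniform bounds in \eqref{con} are precisely what is needed to close the estimates.
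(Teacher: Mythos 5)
Your overall strategy---rewrite \eqref{vwl} in Riemann-type variables, pass to energy-weighted characteristic coordinates $(X,Y)$ where the system becomes semilinear with a globally bounded Lipschitz right-hand side, solve that system globally, and invert the change of variables---is exactly the Hu/Bressan--Zheng method the paper invokes and reviews around equations \eqref{semi1}--\eqref{2.18}. However, your choice of Riemann invariants fails precisely in the regime $\beta\neq 0$ that distinguishes \eqref{vwl} from \eqref{vwe}. You set $R := u_t - \lambda_- u_x$ and $S := u_t - \lambda_+ u_x$. The principal part of \eqref{vwl} is
\begin{equation*}
\alpha^2 u_{tt} + 2\beta u_{tx} - \gamma^2 u_{xx} \;=\; \alpha^2(\partial_t + \lambda_-\partial_x)(\partial_t + \lambda_+\partial_x)u,
\end{equation*}
so the quantity transported along $\partial_t + \lambda_-\partial_x$ is the inner factor $u_t + \lambda_+ u_x$, not $u_t - \lambda_- u_x$; symmetrically for $S$. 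Your formulas coincide with the correct ones only when $\lambda_+ = -\lambda_-$, i.e.\ $\beta = 0$, which is the very special case you are trying to generalize. The paper's choice \eqref{R-S} is $R = \alpha u_t + c_2 u_x = \alpha(u_t + \lambda_+ u_x)$ and $S = \alpha u_t + c_1 u_x = \alpha(u_t + \lambda_- u_x)$.

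This is not cosmetic. Your $R = u_t - \lambda_- u_x$ is an $(x,u)$-dependent linear combination of the true characteristic variables $\tilde R = u_t + \lambda_+ u_x$ and $\tilde S = u_t + \lambda_- u_x$ (since $R = \tilde R - (\lambda_++\lambda_-)u_x$ and $u_x$ is proportional to $\tilde R - \tilde S$). Applying $\partial_t + \lambda_-\partial_x$ to $R$ then produces a term proportional to $(\partial_t + \lambda_-\partial_x)\tilde S = (\partial_t + \lambda_+\partial_x)\tilde S - (\lambda_+-\lambda_-)\partial_x\tilde S$, whose last piece is a first-order $x$-derivative with non-constant coefficient that cannot be rewritten as a function of $(u,R,S)$. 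With your variables the transformed system is therefore still quasilinear and the global Picard--Gronwall argument you describe would not close. Once $R,S$ are replaced by \eqref{R-S}, one obtains the closed system \eqref{R-S-eqn} whose right-hand side is polynomial in $(R,S)$ with coefficients bounded via \eqref{con}, and the remainder of your outline---arctangent-type rescaling (equivalently the $\ell,h,m,g$ variables of \eqref{semi1}--\eqref{semi4}), energy-weighted $p,q$, global solvability of the semilinear system, and inversion of $(X,Y)\mapsto(x,t)$---follows the construction in \cite{H} that the paper cites.
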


To introduce the uniqueness result, let's first introduce some notations.
Denote wave speeds as
\[c_1:=\alpha\lambda_-=\frac{\beta-\sqrt{\beta^2+\alpha^2\gamma^2} }{\alpha}<0,\qquad c_2:=\alpha\lambda_+=\frac{\beta+\sqrt{\beta^2+\alpha^2\gamma^2} }{\alpha}>0,\]
and Riemann variables as
\begin{equation}\label{R-S}
 R:=\alpha u_t+c_2 u_x,\quad S:=\alpha u_t+c_1 u_x.
 \end{equation}
By \eqref{con}, the wave speeds $-c_1$ and $c_2$ are smooth, bounded and uniformly positive.

For a smooth solution of  (\ref{vwl}), the variables $R$ and $S$ satisfy
\begin{equation} \label{R-S-eqn}
\begin{cases}
\alpha(x,u) R_t+c_1(x,u)R_x=a_1 R^2-(a_1+a_2)RS+a_2 S^2+c_2bS-d_1 R,\\
\alpha(x,u)  S_t+c_2(x,u)S_x=-a_1 R^2+(a_1+a_2)RS-a_2 S^2+c_1bR-d_2 S,\\
\displaystyle u_t=\frac{c_2S-c_1R}{\alpha(c_2-c_1)} \quad{\text or} \quad u_x=\frac{R-S}{c_2-c_1},
 \end{cases}
 \end{equation}
 where
\begin{equation*}
\begin{split}
&a_i=\frac{c_i\partial_u\alpha-\alpha\partial_u c_i}{2\alpha(c_2-c_1)},\quad b=\frac{\alpha\partial_x(c_1-c_2)+(c_1-c_2)\partial_x\alpha}{2\alpha(c_2-c_1)},\\
&d_i=\frac{c_2\partial_x c_1-c_1\partial_x c_2}{2(c_2-c_1)}+\frac{\alpha\partial_x c_i-c_i\partial_x\alpha}{2\alpha},\quad(i=1,2).
\end{split}
\end{equation*}
Here $\partial_x$ and $\partial_u$ denote partial derivatives with respect to $x$ and $u$, respectively.

Multiplying the first equation in \eqref{R-S-eqn} by $2R$ and the second one by $2S$, one has the balance laws for energy densities in two directions, namely
\begin{equation}\label{balance1}
\begin{cases}
\displaystyle (R^2)_t+(\frac{c_1}{\alpha}R^2)_x=\frac{2a_2}{\alpha}(RS^2
-R^2S)+\frac{2c_2b}{\alpha}RS-\frac{c_2\partial_x c_1-c_1\partial_x c_2}{\alpha(c_2-c_1)}R^2,\\
\displaystyle (S^2)_t+(\frac{c_2}{\alpha}S^2)_x=\frac{2a_1}{\alpha}(RS^2
-R^2S)-\frac{2c_1b}{\alpha}RS-\frac{c_2\partial_x c_1-c_1\partial_x c_2}{\alpha(c_2-c_1)}S^2.
\end{cases}
\end{equation}
Moreover, we have
\begin{equation}\label{balance}
\begin{cases}
(\Tilde{R}^2)_t+(\frac{c_1}{\alpha}\Tilde{R}^2)_x=G,\\
(\Tilde{S}^2)_t+(\frac{c_2}{\alpha}\Tilde{S}^2)_x=-G,
\end{cases}
\end{equation}
where \begin{equation*}
\begin{split}
&\Tilde{R}^2=\frac{-c_1}{c_2-c_1}R^2,\quad \Tilde{S}^2=\frac{c_2}{c_2-c_1}S^2,\quad{\rm and}\\
&G=\frac{2c_2a_1}{\alpha(c_2-c_1)}R^2 S-\frac{2c_1a_2}{\alpha(c_2-c_1)}RS^2-\frac{2c_1c_2b}{\alpha(c_2-c_1)}RS,
\end{split}\end{equation*}
which indicates the following conserved quantities
\begin{equation*}
\alpha^2 u_t^2+\gamma^2u_x^2=\Tilde{R}^2+\Tilde{S}^2,
\end{equation*}
and the corresponding energy conservation law
\[
(\Tilde{R}^2+ \Tilde{S}^2)_t+(\frac{c_1}{\alpha}\Tilde{R}^2+\frac{c_2}{\alpha}\Tilde{S}^2)_x=0.
\]

Now, we state the uniqueness result in \cite{CCDS}, which together  with the energy conservation proved in \cite{H} show that the problem (\ref{vwl})--(\ref{ID}) has a unique weak solution which conserves the total energy.

\begin{Theorem}[Uniqueness \cite{CCDS} and energy conservation \cite{H}]\label{thm_ec}
 Let the condition \eqref{con} be satisfied, then there exists a unique  conservative weak solution $u(x,t)$ for \eqref{vwl}--\eqref{ID}.

Here a weak solution  $u(x,t)$ defined in Definition \ref{weakdef} is said to be {\em  (energy) conservative} if one can find two  families of positive Radon measures
on the real line: $\{\mu_-^t\}$ and $\{\mu_+^t\}$, depending continuously
on $t$ in the weak topology of measures, with the following properties.
\begi
\item[(i)] At every time $t$ one has
\begin{equation*}
\mu_-^t(\mathbb {R})+\mu_+^t(\mathbb {R})~=~\mathcal{E}_0~:=~
\int_{-\infty}^\infty \Big[\alpha^2\big(x,u_0(x)\big) u_1^2(x)+ \gamma^2\big(x,u_0(x)\big) u_{0,x}^2(x)) \Big]\, dx \,.\end{equation*}

\item[(ii)] For each $t$, the absolutely continuous parts of $\mu_-^t$ and
$\mu_+^t$ with respect to the Lebesgue measure
have densities  respectively given  by
\begin{equation*}
\Tilde{R}^2=\frac{-c_1}{c_2-c_1}(\alpha u_t+c_2 u_x)^2,\qquad
\Tilde{S}^2=\frac{c_2}{c_2-c_1}(\alpha u_t+c_1 u_x)^2.
\end{equation*}
\item[(iii)]  For almost every $t\in\mathbb {R}^+$, the singular parts of $\mu^t_-$ and $\mu^t_+$
are concentrated on the set where $\partial_u \lambda_-=0$ or $\partial_u \lambda_+=0$.

\item[(iv)] The measures $\mu_-^t$ and $\mu_+^t$ provide measure-valued solutions
respectively to the balance laws
\bel{mbl}
\begin{cases}
\displaystyle \xi_t + (\frac{c_1}{\alpha}\xi)_x  = \frac{2c_2a_1}{\alpha(c_2-c_1)}R^2 S-\frac{2c_1a_2}{\alpha(c_2-c_1)}RS^2-\frac{2c_1c_2b}{\alpha(c_2-c_1)}RS, \\
\displaystyle \eta_t + (\frac{c_2}{\alpha}\eta)_x  =  - \frac{2c_2a_1}{\alpha(c_2-c_1)}R^2 S+\frac{2c_1a_2}{\alpha(c_2-c_1)}RS^2+\frac{2c_1c_2b}{\alpha(c_2-c_1)}RS.
\end{cases}
\eeq
\endi
\end{Theorem}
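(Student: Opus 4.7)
The plan is to reduce everything to a smooth semilinear system in characteristic coordinates, so that both existence with the conservation properties (i)--(iv) and uniqueness fall out of ODE-type arguments. This is the Bressan--Zheng strategy developed for \eqref{vwe} and adapted in \cite{H,CCDS} to the present setting.

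For existence together with (i)--(iv), I would introduce characteristic coordinates $(X,Y)$ defined by
\[
X_t + \frac{c_2}{\alpha}\,X_x \;=\; 0, \qquad Y_t + \frac{c_1}{\alpha}\,Y_x \;=\; 0,
\]
normalized so that, along a horizontal slice, $(1+\tilde R^2)\,X_x\,dx$ reproduces the forward energy measure and $(1+\tilde S^2)\,Y_x\,dx$ the backward one. Then I would renormalize the Riemann invariants by $w:=2\arctan R$ and $z:=2\arctan S$, so that blow-ups $R,S\to\pm\infty$ become bounded values $w,z\to\pm\pi$. Plugging these into \eqref{R-S-eqn} and \eqref{balance1} turns the system into a smooth semilinear system for $(u,w,z,x,t)$ on the $(X,Y)$-plane, whose nonlinearities are polynomial expressions in $\cos(w/2),\sin(w/2),\cos(z/2),\sin(z/2)$. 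Global solvability is obtained by Picard iteration together with Gronwall control along characteristics to absorb the extra source terms in $a_i,b,d_i$ produced by the $x$-dependence of $\alpha,\beta,\gamma$. Pushing $dX$ (resp.\ $dY$) forward to the physical plane defines $\mu_+^t$ (resp.\ $\mu_-^t$); the absolutely continuous part gives the density in (ii) wherever the Jacobian $x_X>0$, while the singular part is supported on the degeneracy locus $\{\cos(w/2)=0\}$. Property (iv) follows from rewriting \eqref{balance1} in conservative form in the new coordinates, and (i) is the total mass of $dX+dY$.

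For uniqueness I would run the construction in reverse. Starting from an arbitrary conservative weak solution $u$ with measures $\mu_\pm^t$ satisfying the listed properties, I would define $X(x,t)$ as the cumulative distribution of $\mu_+^t$ along a time slice, transported backwards by the flow of $dx/dt=c_1/\alpha$, and $Y(x,t)$ symmetrically. Conditions (i) and the weak continuity in $t$ make these functions well-defined, nondecreasing and Lipschitz in $x$; condition (iv) ensures that in the $(X,Y)$ variables the solution satisfies exactly the same semilinear system that was constructed above, with initial data determined by \eqref{ID}. Uniqueness for that smooth system then forces $u$ to agree with the solution built in the existence step.

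The main obstacle will be making the change of coordinates rigorous through energy concentrations: the map $(x,t)\mapsto(X,Y)$ is only Lipschitz and collapses intervals whenever the singular part of $\mu_\pm^t$ charges a point, so one must consistently separate vacuum regions in the $(X,Y)$-plane from the physical support. The other delicate point, which distinguishes this problem from the variational wave equation, is property (iii): it must be shown that singular parts of $\mu_\pm^t$ can only concentrate where $\partial_u\lambda_\pm=0$. This is where the generic condition \eqref{gencon} enters crucially, via a transversality argument showing that in the $(X,Y)$-plane the zero set of the Jacobian has codimension one and projects, under the push-forward, onto a set whose projection to $t$ has measure zero outside $\{\partial_u\lambda_\pm=0\}$.
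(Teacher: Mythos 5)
The paper does not prove Theorem \ref{thm_ec}; it states it as a citation of \cite{H} (existence, energy conservation) and \cite{CCDS} (uniqueness), and Section~\ref{gen_sec} merely reviews the semilinear system \eqref{semi1}--\eqref{2.18} from \cite{H} for later use. Your sketch is the Bressan--Zheng strategy those references follow, so at the level of plan it is aligned. A minor variance: you compactify the Riemann invariants via $w=2\arctan R$, $z=2\arctan S$, whereas the cited construction (reproduced here in \eqref{semi1}--\eqref{semi4}) uses the algebraically equivalent bundles $\ell=R/(1+R^2)$, $h=1/(1+R^2)$, $p=(1+R^2)/X_x$ in each direction, carrying the energy density $p,q$ as independent unknowns rather than folding it into the normalization of $X,Y$. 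Both parametrizations work, but the latter makes the smoothness of the source terms $J_k,H_k$ (used later in Section~\ref{sec_piecewise}) transparent, since it keeps $\ell,h$ polynomial rather than trigonometric.

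There is one genuine error. You claim that the generic condition \eqref{gencon} ``enters crucially'' in establishing property (iii). But Theorem~\ref{thm_ec} assumes only \eqref{con}; the hypothesis \eqref{gencon} is reserved for the generic regularity and Lipschitz metric results. Property (iii) must, and does, hold under \eqref{con} alone. The mechanism is visible in the semilinear system: at a point where $h=0$ one automatically has $\ell=0$ (since $\ell^2+h^2=h$), and equation \eqref{semi1} collapses to
\[
\ell_Y=-\frac{q a_1 g}{c_2-c_1}=\frac{\alpha q g}{2(c_2-c_1)^2}\,\partial_u\lambda_- ,
\]
so whenever $\partial_u\lambda_-\neq 0$ and $g\neq 0$ the degeneracy $h=0$ is crossed transversally in $Y$, and its projection to the $t$-axis under $\mathcal F$ has measure zero. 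No transversality-of-jets argument or genericity hypothesis on the coefficients is needed for this; \eqref{gencon} only enters when one wants to further control \emph{higher-order} degeneracies ($\ell_X=\ell_{XX}=0$, crossings $h=g=0$, or $h=\partial_u\lambda_-=\ell_X=0$), which is precisely the content of Lemma~\ref{Lemma 3.1} and Theorem~\ref{thm_reg}. If your proof of (iii) truly required \eqref{gencon}, you would be proving a theorem strictly weaker than the one stated.
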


Furthermore, for above conservative weak solution, the total energy represented by the sum $\mu_-+\mu_+$ is showed to be
conserved in time. This energy may only be concentrated
on a set of zero measure or at points where $\partial_u\lambda_-$ or $\partial_u\lambda+$ vanishes. In particular, if $\partial_u \lambda_\pm\not= 0$
for any $(x,u)$, then the set
$$\Big\{\tau;~\mathcal{E}(\tau)~:=~\int_{-\infty}^\infty\Big[|
\alpha^2\big(x,u(x,\tau)\big) u_t^2(x,\tau)+
\gamma^2\big(x,u(x,\tau)\big) {u}_x^2(x,\tau)
\Big]\, dx  ~<~\mathcal{E}_0\Big\}$$
has measure zero.

Finally, we remark that the Lipschitz continuous dependence result in this paper does not direct to a uniqueness result for the original equation \eqref{vwl}, because in the current paper we only consider the solution constructed in \cite{H}. The uniqueness result in \cite{CCDS} rules our the possibility to construct a different solution using a method different from \cite{H}. It also makes the  Lipschitz continuous dependence result obtained in this paper working for all solutions, due to the uniqueness.
\section{Main ideas}
In this section, we introduce why we need a double optimal transport metric for the Lipschitz continuous dependence of conservative weak solutions of \eqref{vwl}, and how we construct it.

\subsection{Why consider a double optimal transport problem?}
The finite time energy concentration at the gradient blowup, makes the stability problem of \eqref{vwl} a very challenging one. As a direct consequence, solution flow is not Lipschitz in the energy space, i.e. $H^1$ space.
For examples showing this instability, one see \cite{CCCS} on some unitary direction wave models, or \cite{CS,GHZ} on the variational wave equation.

So it is very natural to use an optimal transport metric. An existing metric is the Kantorovich-Rubinstein metric
\[
d^*(u,\hat {u})=\sup_{\|f\|_{{\footnotesize\hbox{Lip}}}\leq 1}\,\bigg|\int f\,d\mu - \int f d\hat{\mu}\bigg|
,\]
where $\mu,\hat{\mu}$  are the
measures with densities
$\frac{1}{2}({\Tilde R}^2+{\Tilde S}^2)(x,u)$ and
$\frac{1}{2}({\Tilde R}^2+{\Tilde S}^2)(x,\hat {u})$
w.r.t.~Lebesgue measure, corresponding to two solutions $u$ and $\hat u$. Here $\|\cdot\|_{{\footnotesize\hbox{Lip}}}$ means the Lipschitz norm. This metric is equivalent to a  $1$-Wasserstein metric by a duel theorem \cite{V}.

Unfortunately, this norm also fails to work. In fact, to capture the quasilinear structure of solutions, we need to consider a double transportation problem, which means that we shall study the propagation of waves in backward and forward directions, respectively, and their interactions. This directs to our new metric.

However, to consider a double transport problem, the metric will change dramatically. For example: first, the metric is a Finsler type metric; secondly, we need to control the growth of norm caused by the energy transfer between two families during nonlinear wave interactions.

As a payback, now we can use both balance laws on forward and backward energy densities along characteristics, such as \eqref{balance1} and \eqref{balance}, to analyze the propagation of waves and the impact of wave interactions. 
Now we give more details of our metric.

\subsection{How to construct the Finsler type Lipschitz metric?}
We construct the metric $d$ and prove that the solution map is Lipschitz continuous under this metric in several steps.

\begin{itemize}
\item[1.] We construct a Lipschitz metric for smooth solution (Sections \ref{sec_smooth}).
\item[2.] We prove by an application of Thom's transversality theorem that the piecewise smooth solutions with only generic singularities are dense in $H^1\times L^2$ space. This proves Theorem \ref{thm_reg}  (Section \ref{gen_sec}).
\item[3.] We extend the Lipschitz metric to piecewise smooth solutions with generic singularities. (Section \ref{sec_piecewise}).
\item[4.] We finally define the metric $d$ for weak solutions with general $H^1\times L^2$ initial data, and complete the proof of Theorems \ref{thm_metric}. We also compare the metric $d$ with some Sobolev metrics and Kantorovich-Rubinstein and Wasserstein metrics (Section \ref{sec_weak}).
\end{itemize}

A more detailed introduction is given below.

\medskip

\paragraph{\bf Step 1: Metric for smooth solutions}
To keep track of the cost in the transportation, we are led to construct the geodesic distance. That is, for two given solution profiles $u(t)$ and $u^\epsilon(t)$, we consider all possible smooth deformations/paths $\gamma^t: \theta \mapsto \big(u^\theta(t),u_t^\theta(t)\big)$ for $\theta\in [0,1]$ with $\gamma^t(0) =\big( u(t),u_t(t)\big)$ and $\gamma^t(1) =\big( u^\epsilon(t), u_t^\epsilon(t)\big)$, and then measure the length of these paths through integrating the norm of the tangent vector $d\gamma^t/d\theta$; see Figure \ref{homotopy figure} (a).

\begin{figure}[h]
  \includegraphics[page=5, scale=0.8]{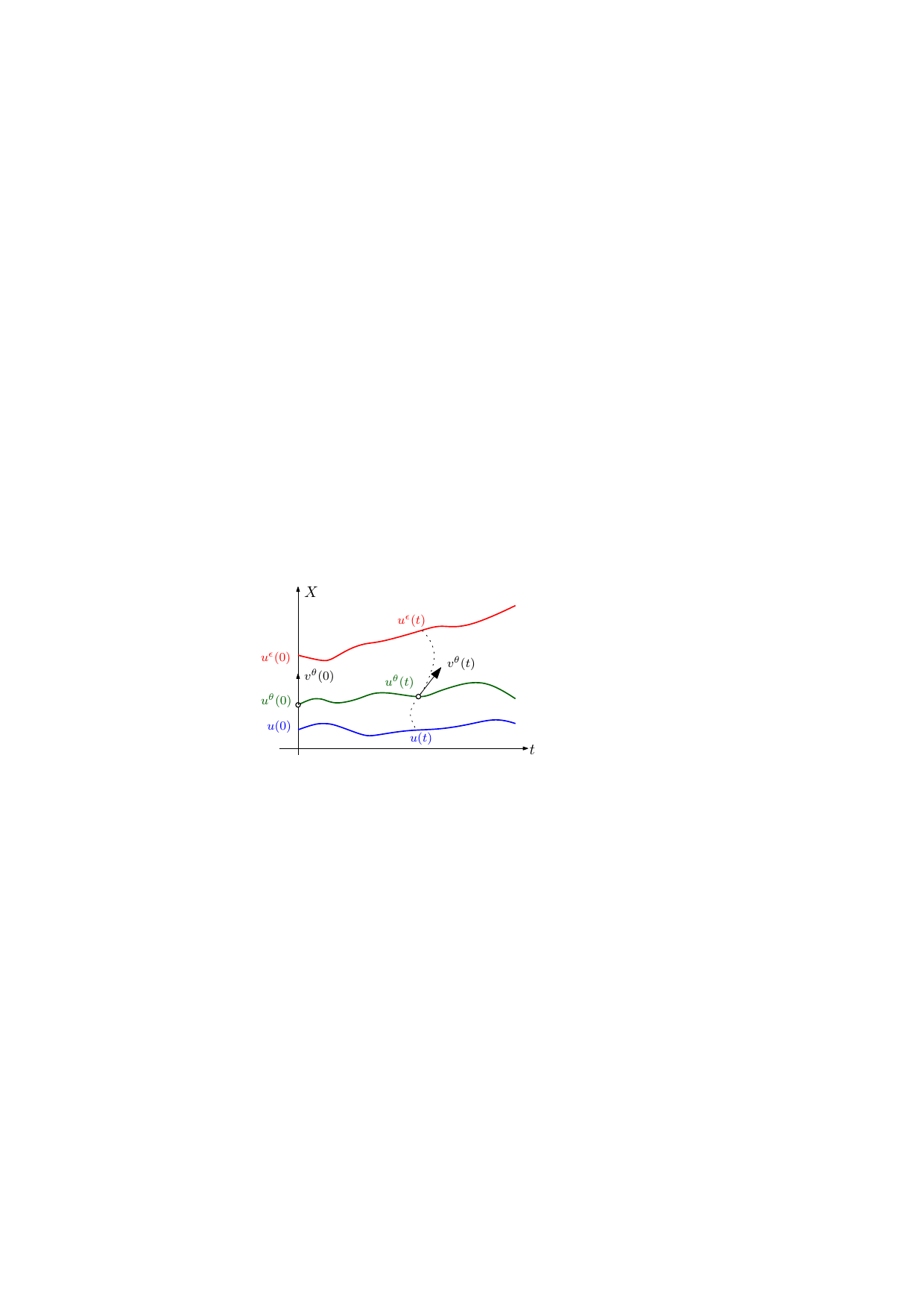}
  \caption{Compare two solutions $u(x)$ and $u^\epsilon(x)$ at a given time $t$.}
  \label{homotopy figure}
\end{figure}

Roughly speaking, the distance between $u$ and $u^\epsilon$ will be calculated by the optimal path length
\begin{equation*}
d\LC u(t), u^\epsilon(t) \RC = \inf_{\gamma^t}\|\gamma^t\| := \inf_{\gamma^t}\int^1_0 \| v^\theta(t) \|_{u^\theta(t)} \ d\theta, \quad \text{where } v^\theta(t) = {d\gamma^t\over d\theta}.
\end{equation*}
The subscript $u^\theta(t)$ emphasizes the dependence of the norm on the flow $u$.
The most important element is how to define the Finsler norm $\| v^\theta(t) \|_{u^\theta(t)}$ by capturing behaviors of the quasilinear wave equation, such that, for regular solutions,
\begin{equation}\label{rough est}
\|\gamma^t\| \leq C \|\gamma^0\| \qquad \forall\ t \in [0, T].
\end{equation}
Here $C$ only depends on the total initial energy and $T$, and is uniformly bounded when the solution approaches a singularity.

More precisely, we define the Finsler norm, by measuring the cost in shifting from one solution to the other one.
We will measure the cost in forward and backward directions, with energy densities $\mu^+,\mu^-$ defined in Theorem \ref{thm_ec}, respectively.

\begin{figure}[h]
  \includegraphics[scale=0.5]{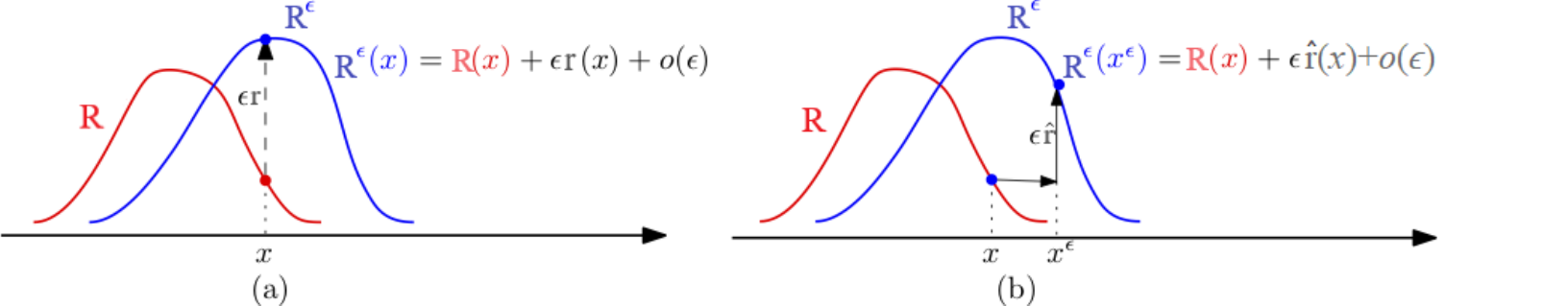}
  \caption{  \label{2storder figure}A sketch of how to deform from $R$ to $R^\epsilon$: (a) a vertical shift $\epsilon r$; (b) a horizontal shift $\epsilon R_x w$ followed by a vertical displacement $\epsilon r$. Here $x^\epsilon:=x+\epsilon w(x)$. We denote the total shift as $\epsilon \hat r=\epsilon (r+R_x w)$. }
  \end{figure}

First, from Figure \ref{2storder figure} (b), we illustrate how to measure the cost of transporting $R$ to $R^\epsilon$ on the $x$-$u$ plane by
\beq\label{change}
[\hbox{change in } R] = o(\epsilon) \hbox{ order of } \Big(R^\epsilon(x^\epsilon) - R(x)\Big) =\hat r :=  r(x) + R_x(x)\, w(x),
\eeq
at any time $t$. Recall $R$ is defined in \eqref{R-S} for backward waves. Here $r$ and $R_x w$ measures the vertical and horizontal changes in the leading order, respectively. Similarly, we can find variations of other functions, like $u$, and the cost of transportation in the forward direction.

Now the basic structure of the Finsler norm, or the cost function for double transportation, can be  very roughly introduced as
\begin{equation}\label{1.6}
\begin{split}
&\int_{-\infty}^{\infty}\Big\{[\text{change\ in\ }x]+[\text{change\ in\ }u]+
[\text{change\ in\ }\arctan R ]\Big\}(1+R^2)
dx\\[1mm]
+& \int_{-\infty}^{\infty}[\text{change\ of\ base\  measure}
\text{\ with\ density}\ R^2]\,dx+\hbox{terms in forward direction}
\end{split}
\end{equation}
Here we use $R^2$, $S^2$ defined in \eqref{R-S} instead of $\Tilde R^2,\ \Tilde S^2$ for  $\mu^+,\mu^-$, for convenience. The forward and backward energies might transfer to each other during wave interactions, so we need to add the change of base measure (of energy) in each direction.

The structure of cost function shown in \eqref{1.6} only gives very rough idea of the real norm.
Two other major groups of terms need to be added,  such that the metric can satisfy the desired uniform Lipschitz property.

\paragraph{\em (i). Interaction potentials: $\mathcal W^-,\ \mathcal W^+$}
By \eqref{balance1} or \eqref{balance}, the forward or backward energy might increase in the wave interaction, although the total energy is conserved. This causes a major challenge. A pair of interaction potentials $\mathcal W^-$ and  $\mathcal W^+$ will be added. These potentials produce time decay which controls the energy increase, in backward or  forward direction, respectively. We also need to very carefully weight each term in the cost function by a different weight.

\paragraph{\em (ii). Relative shifts}
From Figure \ref{Pic_drawing4}, we see that, when a backward wave shifts horizontally by $w$ and a forward wave shifts horizontally by $z$, then these two waves are relatively shifted by $w-z$. And new wave interactions happen due to the relative shift.
So the cost function needs to be adjusted accordingly in order to taking account of the relative shift.

Roughly speaking, the  relative shift terms in the cost function can be evaluated by the source terms of balance laws such as \eqref{balance1}. While there is in fact no routine way to find the relative shift terms. One needs to adjust each term very carefully, corresponding to the balance laws, to obtain the desired Lipschitz continuous property. A slight change in the delicate metric might ruin the Lipschitz property. This part is the most subtle and difficult part in the construction of the metric.

\begin{figure}[htbp]
   \centering
\includegraphics[width=0.35\textwidth]{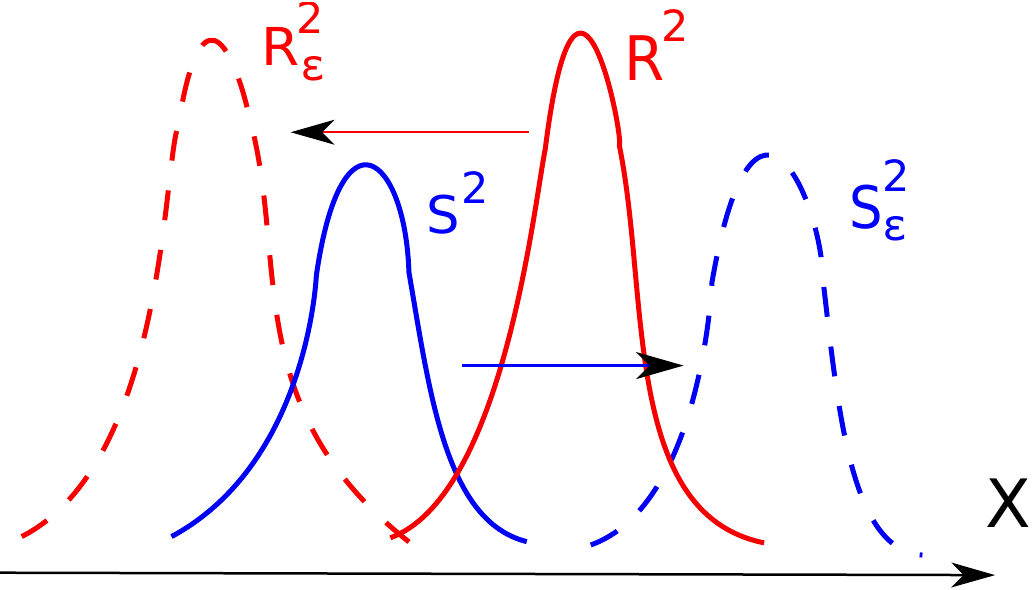}
   \caption{Waves propagate in two directions $\frac{dx}{dt}=\lambda_\pm$. Transportation will cause relative shifts between forward and backward waves, and new wave interactions. This needs to be counted in the cost function.
   }\label{Pic_drawing4}
\end{figure}



\vspace{.2cm}

\paragraph{\bf Step 2: Generic regularity and metric for generic solutions}
However, smooth solutions do not always remain smooth for all time. Therefore a smooth path of initial data $\gamma^0$ may lose regularity at a later time $T$ so that the tangent vector $d\gamma^T/dt$ may not be well-defined (see Figure \ref{homotopy figure} (b)); even if it does exist, it is not obvious that the estimate \eqref{rough est} should remain valid after a singularity.

To overcome the first problem, the idea is to show that there are sufficiently many paths which are initially regular (piecewise smooth), and remain regular later in time. So the norm is well-defined for almost all solutions, named as generic solutions. The generic regularity in Theorem \ref{thm_reg} is itself of great interest because it shows very detailed structures of singularities, and the generic properties of solutions.
The proof relies on an application of Thom's Transversality Theorem.

Then we extend the metric to generic (piecewise smooth) solutions. We consider the semilinear system under characteristic coordinates used in the existence result, and show the metric is continuous in time. So the Lipschitz property still holds.

\vspace{.2cm}
\paragraph{\bf Step 3: Metric for general weak solutions}
Finally, we extend the metric to general weak solutions, by taking limit from generic solutions. This completes the proof of Theorem \ref{thm_metric}. Moreover, we will compare the new metric with other metrics.

\begin{Remark}
There are a lot of works on the optimal transport Lipschitz metrics for the unitary direction wave models, such as Camassa-Holm and Hunter-Saxton equations \cite{BF, BHR,CCCS,CGH, GHR}. These equations can be written as scalar first order equations with nonlocal source terms. So one does not have to consider the double transport problem, and interactions of waves from different families. As a consequence, the construction of metric for these models is much less complex than for the wave equation model.
\end{Remark}

%

\section{The norm of tangent vectors for smooth solutions}\label{sec_smooth}

Our first goal is to define a Finsler norm on tangent vectors measuring the cost of transport. Then by elaborate estimates, we show this norm satisfies the desired Lipschitz property \eqref{rough est} for any smooth solutions.


Let $(u,R,S)$ be any smooth solution to \eqref{vwl}, \eqref{R-S-eqn}, and then take a family of perturbed solutions $(u^\epsilon,R^\epsilon,S^\epsilon)(x)$ to \eqref{vwl}, \eqref{R-S-eqn}, which can be written as
\begin{equation}\label{perb}
u^\epsilon(x)=u(x)+\epsilon v(x)+o(\epsilon), \quad{\rm and }\quad
\begin{cases}
R^\epsilon(x)=R(x)+\epsilon r(x)+o(\epsilon),\\
S^\epsilon(x)=S(x)+\epsilon s(x)+o(\epsilon).
\end{cases}
\end{equation}
Here both $u$ and $u^\epsilon$ satisfy the Definition  \ref{weakdef} of weak solution. Because of finite speed of propagation, for any time $T>0$, there exists a compact subset on the $x$-$t$ plane with $t\in[0,T]$, out of which the solution  is smooth.

Here and in the sequel, we will omit the variables $t, x$ when we use any functions if it does not cause any confusion.

Let the tangent vectors $r,s$ be given, in terms of \eqref{R-S-eqn} (equation on $u_x$) and \eqref{perb}, the perturbation $v$ can be uniquely determined by
\begin{equation}\label{vx}
v_x=\frac{r-s}{c_2-c_1}-\frac{\partial_u c_2-\partial_u c_1}{(c_2-c_1)^2}(R-S)v,\qquad v(0,t)=0.
\end{equation}
Moreover, it holds that
\begin{equation}\label{vt}
v_t=\frac{c_2s-c_1r}{\alpha(c_2-c_1)}-
\frac{c_2S-c_1R}{\alpha^2(c_2-c_1)}v\partial_u \alpha+ \frac{c_2\partial_u c_1-c_1\partial_u c_2}{\alpha(c_2-c_1)^2}(S-R)v.
\end{equation}
Furthermore, by a straightforward calculation, the first order perturbations $v,s,r$ must satisfy the equations
\begin{equation}\label{vtt}
\begin{split}
&\alpha^2 v_{tt}-\gamma^2 v_{xx}+2\beta v_{xt}=[2\gamma\gamma_{u}u_x+2\gamma\gamma_{x}-\beta_u u_t]v_x-[2\alpha\alpha_u u_t+\beta_u u_x+\beta_x]v_t\\
&\qquad\qquad-[\alpha^2_u u_t^2+\alpha_{uu}\alpha u_t^2+2\alpha\alpha_u u_{tt}+2\beta_u u_{xt}+\beta_{xu}u_t+\beta_{uu}u_tu_x]v\\
&\qquad\qquad+[\gamma_u^2 u_x^2+\gamma\gamma_{uu}u_x^2+2\gamma\gamma_uu_{xx}
+2\gamma_u\gamma_xu_x+2\gamma\gamma_{xu}u_x]v,
\end{split}\end{equation}
and
\begin{equation}\label{rt}
\begin{cases}
\alpha r_t+c_1r_x=2a_1Rr-(a_1+a_1)(Rs+Sr)+2a_2Ss+c_2bs-d_1r
+2a_1(c_2-c_1)R_xv\\
\qquad\qquad\qquad+\alpha B_1R^2v-\alpha(B_1+B_2)RSv+\alpha B_2 S^2v+\alpha B_4 Sv-\alpha B_5 Rv,\\
\alpha s_t+c_2s_x=-2a_1Rr+(a_1+a_1)(Rs+Sr)-2a_2Ss+c_1br-d_2s
+2a_2(c_2-c_1)S_xv\\
\qquad\qquad\qquad-\alpha B_1R^2v+\alpha(B_1+B_2)RSv-\alpha B_2 S^2v-\alpha B_6 Sv+\alpha B_3 Rv,\\
\end{cases}\end{equation}
where $B_i=\frac{\alpha \partial_u a_i-a_i \partial_u \alpha}{\alpha^2},i=1,2$, $B_3=\frac{\alpha \partial_u (c_1 b)-c_1b \partial_u \alpha}{\alpha^2},$ $B_4=\frac{\alpha \partial_u (c_2 b)-c_2b \partial_u \alpha}{\alpha^2},$
$B_5=\frac{\alpha \partial_u d_1-d_1\partial_u \alpha}{\alpha^2},$ $B_6=\frac{\alpha \partial_u d_2-d_2\partial_u \alpha}{\alpha^2}.$

To measure the cost in shifting from one solution to the other one, it is nature to consider both vertical and horizontal shifts in the energy space. With this in mind, since the tangent flows $v,r,s$ only measure the vertical shifts between two solutions, we also need to add quantities $w(x,t), z(x,t)$ measuring the horizontal shifts, corresponding to backward and forward directions, respectively.
And it is very tentative to embed some important information of waves into $w(x,t), z(x,t)$ in order to focus only on reasonable transports between two solutions.
Here we require $w(x,t)$ to  satisfy
\[
\epsilon w(x,t)=x^\epsilon(t)-x(t),
\]
where $x^\epsilon(t)$ and $x(t)$ are two backward characteristics starting from initial points $x^\epsilon(0)$ and $x(0)$.
Symmetrically, the function $\epsilon z(x,t)$ measures the difference of two forward characteristics.
Then it is easy to see that $w, z$ satisfy the following system
\begin{equation}\label{wz}
\begin{cases}
\displaystyle \alpha w_t+c_1w_x=\frac{\alpha\partial_x c_1-c_1\partial_x\alpha}{\alpha}w-2a_1(c_2-c_1)(v+u_x w),\\
\displaystyle \alpha z_t+c_2z_x=\frac{\alpha\partial_x c_2-c_2\partial_x\alpha}{\alpha}z-2a_2(c_2-c_1)(v+u_x z),\\
w(x,0)=w_0(x),\qquad z(x,0)=z_0(x).
\end{cases}\end{equation}

Next, we define {\em  interaction potentials} $\mathcal{W}^+/\mathcal{W}^-$ for forward/backward directions as follows.
\begin{equation}\label{W}
\mathcal{W}^-:=1+\int_{-\infty}^x S^2(y)\,dy,\quad
\mathcal{W}^+:=1+\int^{+\infty}_x R^2(y)\,dy.
\end{equation}
Essentially, when tracking a backward wave, $\mathcal{W}^-$ measures the total forward energy, that this backward wave will meet in the future. This interaction potential
keeps decaying as $t$ increases. We will use this decay to balance the possible increase of backward energy during wave interactions. The forward potential $\mathcal{W}^+$ can be explained similarly. To understand these potentials, one can compare them with Glimm potential for hyperbolic conservation laws, while we use the potentials for large data solutions.

In view of \eqref{balance1}, it holds that
\begin{equation*}
\begin{cases}
\displaystyle\mathcal{W}^-_t+\frac{c_1}{\alpha}\mathcal{W}^-_x=
-\frac{c_2-c_1}{\alpha}S^2+\int_{-\infty}^x \big[\frac{2a_1}{\alpha}(RS^2-R^2S)-\frac{2c_1 b}{\alpha}RS-\frac{c_2\partial_x c_1-c_1\partial_xc_2}{\alpha(c_2-c_1)}S^2\big]\,dy,\\
\displaystyle\mathcal{W}^+_t+\frac{c_2}{\alpha}\mathcal{W}^+_x=
-\frac{c_2-c_1}{\alpha}R^2+\int^{+\infty}_x \big[\frac{2a_2}{\alpha}(RS^2-R^2S)+\frac{2c_2 b}{\alpha}RS-\frac{c_2\partial_x c_1-c_1\partial_xc_2}{\alpha(c_2-c_1)}R^2\big]\,dy.\\
\end{cases}
\end{equation*}
This together with condition \eqref{con} implies that
\begin{equation}\label{Westimate}
\begin{cases}
\displaystyle\mathcal{W}^-_t+\frac{c_1}{\alpha}\mathcal{W}^-_x\leq
-\frac{2\gamma_1}{\alpha_2}S^2+G_1(t),\\
\displaystyle\mathcal{W}^+_t+\frac{c_2}{\alpha}\mathcal{W}^+_x\leq
-\frac{2\gamma_1}{\alpha_2}R^2+G_2(t),\\
\end{cases}
\end{equation}
where the functions
$$G_1(t):=\int_{-\infty}^{+\infty} \Big|\frac{2a_1}{\alpha}(RS^2-R^2S)+\frac{2c_1 b}{\alpha}RS-\frac{c_2\partial_x c_1-c_1\partial_xc_2}{\alpha(c_2-c_1)}S^2\Big|\,dy,$$
$$G_2(t):=\int^{+\infty}_{-\infty} \Big|\frac{2a_2}{\alpha}(RS^2-R^2S)+\frac{2c_2 b}{\alpha}RS-\frac{c_2\partial_x c_1-c_1\partial_xc_2}{\alpha(c_2-c_1)}R^2\Big|\,dy.$$
As proved in \cite{CCDS} (see equation (3.16) in \cite{CCDS}), we obtain
\begin{equation}\label{4.9}
\int_0^T G_i(t)\leq C_T,\quad i=1,2,
\end{equation}
for some constant $C_T$ depending only on $T$ and the total energy.

Up to now, we are ready to define a Finsler norm for the tangent vectors $v,r,s$ as
\begin{equation}\label{Finsler v}
\|(v,r,s)\|_{(u,R,S)}: = \inf_{v, w, \hat {r},z, \hat{s}} \|(v, w, \hat r,z, \hat{s})\|_{(u,R,S)},
\end{equation}
where the infimum is taken over the set of vertical displacements $v, \hat{r}, \hat{s}$ and horizontal shifts $w,z$ which satisfy equations \eqref{vx}, \eqref{vt}, \eqref{wz} and relations
\begin{equation}\label{rseq}
\begin{cases}
\displaystyle\hat{r}=r+wR_x+\frac{a_2(w-z)}{c_2-c_1}S^2
-\frac{a_1+a_2}{c_2-c_1}(w-z)RS+\frac{c_2b(w-z)}{c_2-c_1}S,\\
\displaystyle\hat{s}=s+zS_x-\frac{a_1(w-z)}{c_2-c_1}R^2
+\frac{a_1+a_2}{c_2-c_1}(w-z)RS+\frac{c_1b(w-z)}{c_2-c_1}R.
\end{cases}
\end{equation}
Here, to motivate the explicit construction of $\|(v,r,s)\|_{(u,R,S)}$, we consider a reference solution $R$ together with a perturbation $R^\epsilon$. As shown in Figure \ref{2storder figure}, the tangent vector $r$ can be expressed as a horizontal part $\epsilon w$ and a vertical part $\epsilon\hat{r}$, that is $r=\hat{r}-R_x w$. 
The other terms in \eqref{rseq} take account of relative shift. We will give more details on the relative shift terms later.
Now, we define the following norm.
\begin{equation}\label{norm1}
\begin{split}
&\ \|(v, w, \hat r,z, \hat{s})\|_{(u,R,S)}\\
&:=~\kappa_0\int_\mathbb{R} \big[|w|\, \mathcal{W}^-+|z|\, \mathcal{W}^+\big]\,dx+\kappa_1\int_\mathbb{R}  \big[|w|(1+R^2)\, \mathcal{W}^-
+|z|(1+S^2)\, \mathcal{W}^+\big]\,dx\\
&\quad+\kappa_2\int_\mathbb{R}  \Big|v+\frac{Rw-Sz}{c_2-c_1}\Big|\big[(1+R^2)\, \mathcal{W}^-
+(1+S^2)\, \mathcal{W}^+\big]\,dx+\kappa_3\int_\mathbb{R}  \big[|\hat{r}|\, \mathcal{W}^-
+|\hat{s}|\, \mathcal{W}^+\big]\,dx \\
&\quad+\kappa_4\int_\mathbb{R} \Big[ \Big|w_x+\frac{2a_1(w-z)}{c_2-c_1}S\Big|\, \mathcal{W}^-+
\Big|z_x-\frac{2a_2(w-z)}{c_2-c_1}R\Big|\, \mathcal{W}^+\Big]\,dx \\
&\quad+\kappa_5\int_\mathbb{R} \Big[ \Big|Rw_x+\frac{2a_1(w-z)}{c_2-c_1}RS\Big|\, \mathcal{W}^-+
\Big|Sz_x-\frac{2a_2(w-z)}{c_2-c_1}RS\Big|\, \mathcal{W}^+\Big]\,dx \\
&\quad+\kappa_6\int_\mathbb{R} \Big[ \Big|2R\hat{r}+R^2w_x+\frac{2a_1(w-z)}{c_2-c_1}R^2S\Big|\, \mathcal{W}^-+
\Big|2S\hat{s}+S^2z_x-\frac{2a_2(w-z)}{c_2-c_1}RS^2\Big|\, \mathcal{W}^+\Big]\,dx \\
&=: \sum_{i=0}^6\kappa_i\big(\int_\mathbb{R}  J_i^-\, \mathcal{W}^-\,dx+\int_\mathbb{R}  J_i^+\, \mathcal{W}^+\,dx\big)=: \sum_{i=0}^6\kappa_i I_i,
\end{split}
\end{equation}
where $\kappa_i$ with $i=0,1,2\cdots,6$ are the constants to be determined later, and $I_i, J_i^-, J_i^+$ are the corresponding terms in the above equation.
To help readers understand the metric, roughly speaking, $J_i^-$ mean that
\[
\left\{
\begin{array}{rcl}
J_0^-&\approx& [\hbox{change in }x],\vspace{.2cm}\\
J_1^-&\approx& [\hbox{change in }x]\cdot (1+R^2),\vspace{.2cm}\\
J_2^-&\approx& [\hbox{change in } u]\cdot (1+R^2),\vspace{.2cm}\\
J_3^-&\approx&[\hbox{change in }\arctan{R}]\cdot (1+R^2),\vspace{.2cm}\\
{{J_4^-}}&\approx&{[\hbox{change of base  measure with density } 1],}\vspace{.2cm}\\
{J_5^-}&\approx&{[\hbox{change of base  measure with density } R],}\vspace{.2cm}\\
J_6^-&\approx&[\hbox{change of base  measure with density }R^2].
\end{array}\right.\]
And $J_i^+$ is symmetric for forward waves. Here we use $R^2$, $S^2$ defined in \eqref{R-S} instead of $\Tilde R^2,\ \Tilde S^2$ for  $\mu^+,\mu^-$, for convenience.

Next we give more details on how to obtain \eqref{norm1}.

\vspace{.2cm}

\paragraph{\bf [I]} For $I_1$, the integrand $|w|(1+R^2)$ accounts for the cost of transporting the base measure with density
$1+R^2$ from the point $x$ to the point $x+\epsilon w(x)$.

The integrand $|z|(1+S^2)$ accounts for the cost of transporting the base measure with density
$1+S^2$ from the point $x$ to the point $x+\epsilon z(x)$. There are no relative shift terms.

The terms in $I_0$ are corresponding to the variation of $|x|$ with base measure  with density $1$, which are added for a technical purpose.

\vspace{.2cm}
\paragraph{\bf [II]} $I_2$ can be interpreted as: [change in $u$]. Indeed, the change in $u$ can be estimated as
\begin{equation*}
\begin{split}
\frac{u^\epsilon\big(x+\epsilon w(x)\big) - u(x)}{ \epsilon} &= v(x) + u_x(x) w(x)+ o(\epsilon)\\
&= v(x) + \frac{R(x)-S(x)}{c_2-c_1}w(x)+ o(\epsilon)\\
&= v(x) + \frac{Rw-Sz}{c_2-c_1}+\frac{z-w}{c_2-c_1}S+ o(\epsilon).
\end{split}\end{equation*}
Here the last term  $\frac{z-w}{c_2-c_1}S$
on the right hand side of the above equality is just balanced with the relative shift term.

Here we use this term to introduce how to calculate the relative shift term. Recall that
\[
\alpha u_t+c_2 u_x=R,\quad \alpha u_t+c_1 u_x=S.
\]
So the difference of these two equations give
\[
u_x=\frac{1}{c_2-c_1}(R-S).
\]
Roughly speaking,
\beq\label{deltau}
\Delta u\approx\frac{\Delta x}{c_2-c_1}(R-S)=\frac{z-w}{c_2-c_1}(R-S).
\eeq
Here the $S$ term balances $\frac{z-w}{c_2-c_1}S$. We omit the $R$ term since it is a lower order term.

As we can see from \eqref{deltau} that there is only a general philosophy on how to choose terms taking account of the relative shift. One needs to adjust them very carefully to meet the demand. Comparing to the variational wave equation \eqref{vwe} in which two wave speeds have same magnitude but different signs, it is quite different and much harder in finding the relative shift terms for the general equation \eqref{vwl}.

The integrand in $J_2^{+}$ can be estimated in a similar way.

\vspace{.2cm}
\paragraph{\bf [III]} $I_3$ accounts for the vertical displacements in the graphs of $R$ and $S$. More precisely, the integrand $|\hat{r}|$ as the change in $arctan R$ times the density $1+R^2$ of the base measure. Notice that, for $x^\epsilon=x+\epsilon w(x)+o(\epsilon)$,
\begin{equation*}
\begin{split}
\arctan R^\epsilon(x^\epsilon)& = \arctan \Big( R(x^\epsilon) + \epsilon r(x^\epsilon) + o(\epsilon) \Big) \\
& = \arctan \Big( R(x) + \epsilon w(x) R_{x}(x) + \epsilon r(x) + o(\epsilon)\Big) \\
& = \arctan R(x) + \epsilon {r(x) + w(x)R_{x}(x) \over 1 + R^2(x)}+ o(\epsilon),
\end{split}
\end{equation*}
which together with the relative shift term gives $J^-_3$. Here we add some subtle adjustments in the relative shift terms to take account of interactions between forward and backward waves using \eqref{R-S-eqn}.

The change in $\arctan S$ times the density $1+S^2$ of the base measure can be explained similarly.

\vspace{.2cm}
\paragraph{\bf [IV]} $I_6$ can be interpreted as the change in the base measure with densities $R^2$ and $S^2$, produced by the shifts $w,z$. Indeed,
\begin{equation*}
\begin{split}
\big(R^\epsilon(x^\epsilon) \big)^2 & = R^2(x^\epsilon) + 2\epsilon R(x^\epsilon) r(x^\epsilon) + o(\epsilon) \\
& = R^2(x) + 2\epsilon w(x) R(x) R_{x}(x) + 2\epsilon R(x) r(x) + o(\epsilon),
\end{split}
\end{equation*}
we obtain that
\begin{equation}\label{exp1}
\begin{split}
\big( R^\epsilon_x(x^\epsilon) \big)^2 dx^\epsilon - R^2(x) dx
=\Big( 2\epsilon R(x) R_{x}(x)w(x)  + 2\epsilon R(x) r(x) +\epsilon R^2(x)w_x(x)+o(\epsilon)\Big)\,dx.
\end{split}
\end{equation}
Moreover, as in \eqref{deltau}, in view of \eqref{balance1}, if the mass with density $S^2$ is transported from $x$ to $x+\epsilon z(x)$, the relative shift between forward and backward waves will contribute
\begin{equation}\label{exp2}
[2a_2(RS^2-R^2S)+2c_2bRS]\cdot \epsilon\frac{z-w}{c_2-c_1}.
\end{equation}

Hence subtracting \eqref{exp2} from  \eqref{exp1} yields the term $J_6^-$. Symmetrically, we have $J_6^{+}$.

\vspace{.2cm}

\paragraph{\bf [V]} In order to close the time derivatives of $I_2$ and $I_6$, we have to add two additional terms $I_4$ and $I_5$. Here $I_4$ accounts for the change in the Lebesgue measure produced by the shifts $w,z$, while $I_5$ account for  the change in the base measure with densities $R$ and $S$, produced by the shifts $w,z$. These two terms are in some sense lower order terms of $I_6$.

\vspace{.2cm}

The main goal of this section is to prove  the following lemma by showing that the norm of tangent vectors defined in \eqref{Finsler v} satisfies a Gr\"{o}wnwall type inequality.
\begin{Lemma}\label{lem_est} Let $T>0$ be given,
and $(u,R,S)(x,t)$ be a smooth solution to \eqref{vwl} and \eqref{R-S-eqn} when $t\in[0,T]$. Assume that the first order perturbations $(v,r,s)$ satisfy the corresponding equations \eqref{vtt}--\eqref{rt}. Then it follows that
\begin{equation}\label{normest}
\|(v,r,s)(t)\|_{(u,R,S)(t)}\leq \mathcal{C}(T)\|(v,r,s)(0)\|_{(u,R,S)(0)},
\end{equation}
for some constant $\mathcal{C}(T)$ depending only on the initial total energy and $T$.
\end{Lemma}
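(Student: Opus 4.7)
The plan is to prove \eqref{normest} via a Gronwall argument applied directly to the Finsler functional. Writing $\Phi(t):=\sum_{i=0}^{6}\kappa_i I_i(t)$ and choosing the weights $\kappa_0,\dots,\kappa_6$ inductively (each $\kappa_j$ sufficiently large so that a lower-order integrand $I_j$ can absorb cross contributions from $dI_i/dt$ with $j<i$), I aim to establish
\begin{equation*}
\frac{d}{dt}\Phi(t)\;\leq\;\bigl(C_0+C_1 G_1(t)+C_1 G_2(t)\bigr)\,\Phi(t),
\end{equation*}
with $C_0,C_1$ depending only on $\mathcal{E}_0$ and the bounds in \eqref{con}. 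Integrating and using \eqref{4.9} then yields \eqref{normest} with $\mathcal{C}(T)=\exp(C_0 T+2C_1 C_T)$, because $\Phi(t)$ is an admissible competitor in the infimum defining $\|(v,r,s)(t)\|_{(u,R,S)(t)}$ once one evolves the horizontal shifts $w,z$ by \eqref{wz} starting from a near-optimal choice at time $0$.

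For each $i$, the derivative $dI_i/dt$ is computed by differentiating $J_i^{\pm}\,\mathcal{W}^{\pm}$ along the characteristic fields $\partial_t+\tfrac{c_{1,2}}{\alpha}\partial_x$, using the linearized equations \eqref{vtt}--\eqref{rt} together with \eqref{wz} and the relations \eqref{vx},\eqref{vt}. Two mechanisms drive the estimate. First, \eqref{Westimate} contributes a favourable dissipative factor $-\tfrac{2\gamma_1}{\alpha_2}\int S^2|J_i^-|\,dx$ from transporting $\mathcal{W}^-$ (and symmetrically for $\mathcal{W}^+$), together with a harmless $G_i(t)\,I_i$ term. Second, integration by parts transfers any spatial derivatives of $J_i^\pm$ onto $\mathcal{W}^\pm$, turning advective $x$-derivatives into the manageable time derivative of the potential.

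Term-by-term, one derives transport equations for $\hat r,\hat s$, for $w_x+\tfrac{2a_1(w-z)}{c_2-c_1}S$ and $z_x-\tfrac{2a_2(w-z)}{c_2-c_1}R$, and for the bracketed expressions inside $J_5^\pm,J_6^\pm$. Each source term on the right-hand side must fall into one of three categories: a bounded coefficient times a lower-index integrand $J_j^\pm$, absorbed by $\kappa_j I_j$ once $\kappa_j/\kappa_i$ is large enough; an $S^2$-weighted copy of $|J_i^-|$ (respectively $R^2|J_i^+|$), absorbed by the dissipation from $\mathcal{W}^-$ (respectively $\mathcal{W}^+$); or a cross-family term, controlled via Cauchy--Schwarz together with the uniform bound $\mathcal{W}^\pm\leq 1+\mathcal{E}_0$ and terms already present in $\Phi$. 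The relative-shift coefficients in \eqref{rseq} and in $J_4^\pm,J_5^\pm,J_6^\pm$ are engineered precisely so that the cubic interactions $RS^2,\,R^2S$ and $c_{1,2}bRS$ coming from \eqref{R-S-eqn}--\eqref{balance1} land in one of these three categories rather than producing uncontrolled growth.

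The main obstacle is the bookkeeping for $I_6$. The integrand $J_6^-$ morally represents the first variation of the backward energy measure $R^2\,dx$, so its time derivative inherits every cubic source from \eqref{balance1}. After computing $(J_6^-\mathcal{W}^-)_t+(\tfrac{c_1}{\alpha}J_6^-\mathcal{W}^-)_x$ and integrating in $x$, the relative-shift piece $\tfrac{2a_1(w-z)}{c_2-c_1}R^2S$ inside $J_6^-$ together with the corrections built into $\hat r$ via \eqref{rseq} must exactly cancel the uncontrolled $R^2S\cdot(\text{shift})$ sources; any coefficient mismatch would leave behind growth of order $R^3|\cdot|$ which the $S^2|J_6^-|$ dissipation cannot cover. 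Carrying out this cancellation, together with the coupled closure between $I_4,I_5,I_6$ and the forward-backward cross terms, is the technical heart of the proof, and is where the general quasilinear structure of \eqref{vwl}, in particular the non-antisymmetric wave speeds $c_1\neq -c_2$, forces genuinely new computations compared with the variational wave equation \eqref{vwe} treated in \cite{BC2015}.
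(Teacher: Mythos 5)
Your proposal follows essentially the same route as the paper's proof of Lemma~\ref{lem_est}: derive transport equations for each $J_k^\pm$ along the $\tfrac{dx}{dt}=c_{1,2}/\alpha$ characteristics, couple them with the decay \eqref{Westimate} of $\mathcal{W}^\pm$ to extract the dissipative $-\tfrac{2\gamma_1}{\alpha_2}\int S^2 J_k^-\,\mathcal{W}^-\,dx$ terms and the harmless $G_{1,2}(t)I_k$ contributions, verify that the relative-shift corrections in \eqref{rseq} and in $J_4^\pm,J_5^\pm,J_6^\pm$ kill the otherwise uncontrollable cubic sources inherited from \eqref{balance1}, and finally close by Gronwall using \eqref{4.9} and a near-optimal choice of $(w,z,\hat r,\hat s)$ at $t=0$ evolved by \eqref{wz}. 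This matches the paper step for step.

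One point you state is not quite right and would actually break the inductive weight selection if taken literally. You assert that each $\kappa_j$ should be chosen large so that $I_j$ absorbs cross contributions from $dI_i/dt$ with $j<i$, i.e.\ that the absorption order is the index order $0<1<\cdots<6$. But look at the $(1+S^2)J_\ell^-$-type terms that arise in $dI_2/dt$ (equation \eqref{I2est}): they carry indices $\ell\in\{1,3,5\}$, so controlling $dI_2/dt$ requires $\kappa_3,\kappa_5\gg\kappa_2$ even though $3,5>2$; likewise $\mathcal{F}_6^h=\{1,3,5\}$ forces $\kappa_3,\kappa_5\gg\kappa_6$. If you insist on $\kappa_0>\kappa_1>\cdots>\kappa_6$ you reach a contradiction. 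The paper's resolution is exactly the observation illustrated in Fig.~\ref{f:wa36}: the directed graph $k\mapsto\mathcal{F}_k^h$ encoding which dissipations must dominate which $(1+S^2)$-weighted cross terms is \emph{acyclic}, and therefore admits a topological order; the weights are then chosen according to that order, namely $\kappa_0\gg\kappa_1\gg\kappa_3,\kappa_4\gg\kappa_5\gg\kappa_2,\kappa_6$ (realized by $1,\delta,\delta^2,\delta^2,\delta^3,\delta^4,\delta^4$). So replace ``\,$j<i$\,'' with ``\,$j\in\mathcal{F}_i^h$, and the graph $i\mapsto\mathcal{F}_i^h$ has no cycles\,''; with that correction your argument is the paper's.
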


\begin{proof}
To achieve \eqref{normest}, it suffices to show that
\begin{equation}\label{est on w and z}
{d \over dt}\|(v, w, \hat{r},z,\hat{s})(t) \|_{(u,R,S)(t)} \leq a(t) \|(v, w, \hat{r},z,\hat{s})(t)\|_{(u,R,S)(t)},
\end{equation}
for  any $w, z$ and $\hat{r}, \hat{s}$ satisfying (\ref{wz}) and (\ref{rseq}), with a local integrable function $a(t)$.
Here and in the sequel, unless specified, we will use $C>0$ to denote a constant depending on the initial total energy and $T$, where $C$ may vary in different estimates. Now we prove \eqref{est on w and z} by seven steps.

\bigskip
\paragraph{\bf 0} We first treat the time derivative of $I_0$. By \eqref{R-S-eqn} and \eqref{wz}, a straightforward calculation yields that
\begin{equation*}\label{wt}
\begin{split}
&\displaystyle w_t+(\frac{c_1}{\alpha}w)_x\\
&=2\frac{\alpha\partial_x c_1-c_1\partial_x\alpha}{\alpha^2}w-\frac{2a_1}{\alpha}(c_2-c_1)(v+
\frac{Rw-Sz}{c_2-c_1})+\frac{2a_1S}{\alpha}(w-z)
-\frac{2a_1w}{\alpha}(R-S).
\end{split}\end{equation*}
This together with \eqref{Westimate} implies that
\begin{equation*}
\begin{split}
&\frac{d}{dt}\int_\mathbb{R}J_0^-\mathcal{W}^-\,dx = \frac{d}{dt}\int_\mathbb{R}|w|\mathcal{W}^-\,dx \\
&\leq C\int_\mathbb{R} |w|(1+|R|+|S|)\mathcal{W}^-\,dx+C\int_\mathbb{R} |z||S|\mathcal{W}^+\,dx+ C\int_\mathbb{R}  \Big|v+\frac{Rw-Sz}{c_2-c_1}\Big|\mathcal{W}^-\,dx\\
&\quad+G_1(t)\int_\mathbb{R} |w|\mathcal{W}^-\,dx-\frac{2\gamma_1}{\alpha_2}\int_\mathbb{R} |w|S^2\mathcal{W}^-\,dx.
\end{split}
\end{equation*}
Repeating the above process for the time derivative of $\int_\mathbb{R}J_0^+\mathcal{W}^+\,dx$ yields
\begin{equation}\label{I0est}
\begin{split}
&\frac{d}{dt}I_0 =\frac{d}{dt}\int_\mathbb{R}
\Big(J_0^-\mathcal{W}^-+J_0^+\mathcal{W}^+\Big)\,dx\\
&\leq C\sum_{k=1,2}\int_\mathbb{R}\Big((1+|S|)J_k^-\mathcal{W}^-
+(1+|R|)J_k^+\mathcal{W}^+\Big)\,dx\\
&\quad+\int_\mathbb{R} \Big(G_1(t)J_0^-\mathcal{W}^-+G_2(t)J_0^+\mathcal{W}^+\Big)\,dx-\frac{2\gamma_1}{\alpha_2}\int_\mathbb{R} \Big(S^2J_0^-\mathcal{W}^-+R^2J_0^+\mathcal{W}^+\Big)\,dx.
\end{split}
\end{equation}

\bigskip
\paragraph{\bf 1}  For $I_1$, using \eqref{R-S-eqn} and \eqref{balance1}, by a direct computation, one has
\begin{equation}\label{1+R}
\begin{split}
(1+R^2)_t+[\frac{c_1}{\alpha}(1+R^2)]_x=&\frac{2a_2}{\alpha}
(RS^2-R^2S)+\frac{2c_2b}{\alpha}RS-\frac{c_2\partial_x c_1-c_1\partial_xc_2}{\alpha(c_2-c_1)}R^2\\
&-\frac{2a_1}{\alpha}(R-S)+\frac{\alpha\partial_x c_1-c_1\partial_x\alpha}{\alpha^2}.
\end{split}\end{equation}
With this help and by \eqref{wz}, we obtain
\begin{equation*}\label{wt1}
\begin{split}
&\big[w(1+R^2)\big]_t+\big[\frac{c_1}{\alpha}w(1+R^2)\big]_x\\
&=(w_t+\frac{c_1}{\alpha}w_x)(1+R^2)
+w[(1+R^2)_t+\big(\frac{c_1}{\alpha}(1+R^2)\big)_x]\\
&=\big[\frac{\alpha\partial_x c_1-c_1\partial_x\alpha}{\alpha^2}w-\frac{2a_1}{\alpha}(c_2-c_1)(v+
\frac{R-S}{c_2-c_1}w)\big](1+R^2)+w\big[\frac{2a_2}{\alpha}
(RS^2-R^2S)\\
&\quad+\frac{2c_2b}{\alpha}RS-\frac{c_2\partial_x c_1-c_1\partial_xc_2}{\alpha(c_2-c_1)}R^2-\frac{2a_1}{\alpha}(R-S)+\frac{\alpha\partial_x c_1-c_1\partial_x\alpha}{\alpha^2}\big]\\
&=-\frac{2a_1}{\alpha}(v+\frac{Rw-Sz}{c_2-c_1})(1+R^2)(c_2-c_1)
-\frac{2a_1Sz}{\alpha}(1+R^2)+\frac{w}{\alpha}\big[2(a_1-a_2)R^2S\\
&\quad+2a_2RS^2
+2c_2bRS+\frac{\partial_x(c_2-c_1) }{c_2-c_1}c_1R^2-\frac{\partial_x\alpha}{\alpha}c_1R^2-2a_1R+4a_1S
+2\frac{\alpha\partial_x c_1-c_1\partial_x\alpha}{\alpha}\big],
\end{split}
\end{equation*}
which, together with the uniform bounds \eqref{Westimate} on the weights, shows that
\begin{equation*}
\begin{split}
&\frac{d}{dt}\int_\mathbb{R}J_1^-\mathcal{W}^-\,dx = \frac{d}{dt}\int_\mathbb{R}|w|(1+R^2)\mathcal{W}^-\,dx \\
&\leq C\int_\mathbb{R} |w|(1+|RS^2|+|R^2S|+|RS|+|R^2|+|R|+|S|)\mathcal{W}^-\,dx\\
&\quad+C\int_\mathbb{R} |z|(|S|+|R^2S|)\mathcal{W}^+\,dx+C\int_\mathbb{R}  \Big|v+\frac{Rw-Sz}{c_2-c_1}\Big|(1+R^2)\mathcal{W}^-\,dx\\
&\quad+G_1(t)\int_\mathbb{R} |w|(1+R^2)\mathcal{W}^-\,dx-\frac{2\gamma_1}{\alpha_2}\int_\mathbb{R} |w|(1+R^2)S^2\mathcal{W}^-\,dx.
\end{split}
\end{equation*}
Similarly, we can obtain the estimate for the other terms of $I_1$.  Thus, it holds that
\begin{equation}\label{I1est}
\begin{split}
\frac{d}{dt}I_1
\leq& C\sum_{k=1,2}\int_\mathbb{R}\Big((1+|S|)J_k^-\mathcal{W}^-
+(1+|R|)J_k^+\mathcal{W}^+\Big)\,dx\\
&+ C\int_\mathbb{R}\Big(S^2J_0^-\mathcal{W}^-+R^2J_0^+\mathcal{W}^+\Big)\,dx
\\
&+\int_\mathbb{R} \Big(G_1(t)J_1^-\mathcal{W}^-+G_2(t)J_1^+\mathcal{W}^+\Big)\,dx-\frac{\gamma_1}{\alpha_2}\int_\mathbb{R} \Big(S^2J_1^-\mathcal{W}^-+R^2J_1^+\mathcal{W}^+\Big)\,dx.
\end{split}
\end{equation}
Here we have used the fact that $|RS^2|\leq \frac{1}{2} (\frac{\gamma_1}{\alpha_2} R^2S^2+\frac{\alpha_2}{\gamma_1}S^2)$ and $|R^2S|\leq \frac{1}{2} (\frac{\gamma_1}{\alpha_2} R^2S^2+\frac{\alpha_2}{\gamma_1}R^2)$.

\bigskip
\paragraph{\bf 2}  To estimate the time derivative of $I_2$, recalling
\eqref{vx} and \eqref{vt}, we get the equation for the first order perturbation $v$:
\begin{equation}\label{vequ}
v_t+\frac{c_1}{\alpha}v_x=\frac{s}{\alpha}+
\frac{2a_1R-2a_2S}{\alpha}v+\frac{\partial_u c_1-\partial_u c_2}{\alpha(c_2-c_1)}Sv.
\end{equation}
Next, by \eqref{R-S-eqn} and \eqref{wz}, it holds that
\begin{equation}\label{Rwt}
\begin{split}
&\Big[\frac{Rw-Sz}{c_2-c_1}\Big]_t
+\frac{c_1}{\alpha}\Big[\frac{Rw-Sz}{c_2-c_1}\Big]_x\\
&=\frac{w}{c_2-c_1}(R_t+\frac{c_1}{\alpha}R_x)
-\frac{z}{c_2-c_1}(S_t+\frac{c_2}{\alpha}S_x)+\frac{z}{\alpha}S_x
+\frac{R}{c_2-c_1}(w_t+\frac{c_1}{\alpha}w_x)\\
&\quad-\frac{S}{c_2-c_1}(z_t+\frac{c_2}{\alpha}z_x)+\frac{S}{\alpha}z_x
+(Rw-Sz)\big[(\frac{1}{c_2-c_1})_t+\frac{c_1}{\alpha}(\frac{1}{c_2-c_1})_x\big]\\
&=\frac{w}{\alpha(c_2-c_1)}\Big[a_1R^2-\big(a_1+a_2+\frac{\partial_u c_2-\partial_u c_1}{c_2-c_1}\big)RS+a_2S^2+c_2bS+(d_1-\partial_x c_1)R\Big]\\
&\quad-\frac{z}{\alpha(c_2-c_1)}\Big[-a_1R^2+(a_1+a_2)RS-(a_2+\frac{\partial_u c_2-\partial_u c_1}{c_2-c_1})S^2+c_1bR+(d_2-\partial_x c_1)S\Big]\\
&\quad +\frac{z}{\alpha}S_x+\frac{S}{\alpha}z_x
-\frac{2a_1R}{\alpha}\big(v+\frac{R-S}{c_2-c_1}w\big)
+\frac{2a_2S}{\alpha}\big(v+\frac{R-S}{c_2-c_1}z\big).
\end{split}
\end{equation}
Consequently, in accordance with \eqref{1+R}, \eqref{vequ} and \eqref{Rwt}, we have
\begin{equation*}\label{Rwt1}
\begin{split}
&\Big[\big(v+\frac{Rw-Sz}{c_2-c_1}\big)(1+R^2)\Big]_t
+\Big[\frac{c_1}{\alpha}\big(v+\frac{Rw-Sz}{c_2-c_1}\big)(1+R^2)\Big]_x\\
&=\Big[(v_t+\frac{c_1}{\alpha}v_x)
+\big(\frac{Rw-Sz}{c_2-c_1}\big)_t
+\frac{c_1}{\alpha}\big(\frac{Rw-Sz}{c_2-c_1}\big)_x\Big](1+R^2)\\
&\quad+\big(v+\frac{Rw-Sz}{c_2-c_1}\big)
\big[(1+R^2)_t+\big(\frac{c_1}{\alpha}(1+R^2)\big)_x\big]\\
&=\frac{1+R^2}{\alpha}\Big[s+zS_x-\frac{a_1(w-z)}{c_2-c_1}R^2
+\frac{a_1+a_2}{c_2-c_1}(w-z)RS+\frac{c_1b(w-z)}{c_2-c_1}R\Big]
\\
&\quad+\frac{w(1+R^2)}{\alpha(c_2-c_1)}\big[a_2S^2+c_2bS\big]-\frac{z(1+R^2)}{\alpha(c_2-c_1)}\big[a_2S^2+(d_2-\partial_x c_1)S\big]\\
&\quad
+\frac{1+R^2}{\alpha}(Sz_x-\frac{2a_2(w-z)}{c_2-c_1}RS)+\big(v+\frac{Rw-Sz}{c_2-c_1}\big)\Big[
\frac{2a_2}{\alpha}(RS^2-R^2S)+\frac{2c_2b}{\alpha}RS
\\
&\quad -\frac{c_2\partial_xc_1-c_1\partial_xc_2}{\alpha(c_2-c_1)}R^2+\frac{\partial_uc_1-\partial_uc_2}{\alpha(c_2-c_1)}(1+R^2)S
+\frac{\alpha\partial_xc_1-c_1\partial_x\alpha}{\alpha^2}-\frac{2a_1}{\alpha}(R-S)
\Big].
\end{split}
\end{equation*}
This immediately gives that
\begin{equation*}
\begin{split}
&\frac{d}{dt}\int_\mathbb{R}J_2^-\mathcal{W}^-\,dx = \frac{d}{dt}\int_\mathbb{R}\Big|v+\frac{Rw-Sz}{c_2-c_1}\Big|(1+R^2)\mathcal{W}^-\,dx \\
&\leq C\int_\mathbb{R} |w|(1+R^2)(S^2+|S|)\mathcal{W}^-\,dx+C\int_\mathbb{R} |z|(1+R^2)(S^2+|S|)\mathcal{W}^+\,dx\\
&\quad+C\int_\mathbb{R}  \Big|v+\frac{Rw-Sz}{c_2-c_1}\Big|(1+|R|+|S|+R^2+|RS|+|R^2S|+|RS^2|)\mathcal{W}^-\,dx\\
&\quad+C\int_\mathbb{R} |\hat{s}|(1+R^2)\mathcal{W}^+\,dx+C\int_\mathbb{R} \Big|Sz_x-\frac{2a_2(w-z)}{c_2-c_1}RS\Big|(1+R^2)\mathcal{W}^+\,dx\\
&\quad+G_1(t)\int_\mathbb{R} \Big|v+\frac{Rw-Sz}{c_2-c_1}\Big|(1+R^2)\mathcal{W}^-\,dx
-\frac{2\gamma_1}{\alpha_2}\int_\mathbb{R} \Big|v+\frac{Rw-Sz}{c_2-c_1}\Big|(1+R^2)S^2\mathcal{W}^-\,dx.
\end{split}
\end{equation*}
In a similar way, we have
\begin{equation}\label{I2est}
\begin{split}
\frac{d}{dt}I_2
\leq&  C\int_\mathbb{R}\Big((1+|S|)J_2^-\mathcal{W}^-
+(1+|R|)J_2^+\mathcal{W}^+\Big)\,dx\\
&+
C\sum_{i=1,3,5}\int_\mathbb{R}\Big((1+S^2)J_i^-\mathcal{W}^-
+(1+R^2)J_i^+\mathcal{W}^+\Big)\,dx\\
&+\int_\mathbb{R} \Big(G_1(t)J_2^-\mathcal{W}^-+G_2(t)J_2^+\mathcal{W}^+\Big)\,dx-\frac{2\gamma_1}{\alpha_2}\int_\mathbb{R} \Big(S^2J_2^-\mathcal{W}^-+R^2J_2^+\mathcal{W}^+\Big)\,dx.
\end{split}
\end{equation}

\bigskip
\paragraph{\bf 3}  We now turn to the time derivative of $I_3$, which is much more delicate than the other terms.
Differentiating $\eqref{R-S-eqn}_1$ with respect to $x$, we have
\begin{equation}\label{Rxt}
\begin{split}
R_{tx}&+\big(\frac{c_1}{\alpha}R_x\big)_x=\frac{2a_1}{\alpha}RR_x
-\frac{a_1+a_2}{\alpha}(RS_x+SR_x)+\frac{2a_2}{\alpha}SS_x
+\frac{c_2b}{\alpha}S_x-\frac{d_1}{\alpha}R_x\\
&+\frac{B_1R^3-B_2S^3}{c_2-c_1}+\frac{B_1+2B_2}{c_2-c_1}RS^2
-\frac{2B_1+B_2}{c_2-c_1}R^2S+(A_1-\frac{B_5}{c_2-c_1})R^2\\
&+(A_2-\frac{B_4}{c_2-c_1})S^2-(A_1+A_2-\frac{B_4+B_5}{c_2-c_1})RS
-A_4R+A_3S,
\end{split}\end{equation}
where we have used the notations in \eqref{rt} and \[A_i=\frac{\alpha \partial_x a_i-a_i \partial_x \alpha}{\alpha^2},i=1,2,\quad A_3=\frac{\alpha \partial_x (c_2 b)-c_2b \partial_x \alpha}{\alpha^2},
\]
and
 \[
A_4=\frac{\alpha \partial_x d_1-d_1\partial_x \alpha}{\alpha^2},\quad A_5=\frac{\alpha \partial_x d_2-d_2\partial_x \alpha}{\alpha^2}.\]
Then it follows from \eqref{rt}, \eqref{wz} and \eqref{Rxt} that
\begin{equation}\label{rtest1}
\begin{split}
&\big[r+wR_x\big]_t+\big[\frac{c_1}{\alpha}(r+wR_x)\big]_x\\
&=(r_t+\frac{c_1}{\alpha}r_x)+\big(\frac{c_1}{\alpha}\big)_x r
+R_x(w_t+\frac{c_1}{\alpha}w_x)
+w[R_{tx}+\big(\frac{c_1}{\alpha}R_x\big)_x]\\
&=(v+\frac{R-S}{c_2-c_1}w)\big(B_1R^2-(B_1+B_2)RS+B_2S^2+B_3S-B_4R\big)
-\frac{a_1+a_2}{\alpha}R(s+S_xw)\\
&\quad+\frac{2a_2}{\alpha}S(s+S_xw)+\frac{c_2b}{\alpha}(s+S_xw)
+\frac{a_1-a_2}{\alpha}S(r+R_xw)-\frac{d_1}{\alpha}(r+R_xw)\\
&\quad+\frac{\alpha\partial_xc_1-c_1\partial_x\alpha}{\alpha^2}(r+R_xw)
+w\big[A_1R^2-(A_1+A_2)RS+A_2S^2+A_3S-A_4R\big].\\
\end{split}
\end{equation}
Furthermore, for the third term of $J_3^-$, we derive from \eqref{R-S-eqn} \eqref{balance1} and \eqref{wz} that
\begin{equation}\label{rtest2}
\begin{split}
&\Big[\frac{a_2(w-z)}{c_2-c_1}S^2\Big]_t
+\Big[\frac{c_1}{\alpha}\frac{a_2(w-z)}{c_2-c_1}S^2\Big]_x\\
&=\frac{a_2S^2}{c_2-c_1}(w_t+\frac{c_1}{\alpha}w_x)
-\frac{a_2S^2}{c_2-c_1}(z_t+\frac{c_2}{\alpha}z_x)+\frac{a_2S^2}{\alpha}z_x
+\frac{a_2(w-z)}{c_2-c_1}\big[(S^2)_t+\big(\frac{c_2}{\alpha}S^2\big)_x\big]\\
&\quad-\frac{a_2(w-z)}{c_2-c_1}\big[\big(\frac{c_2}{\alpha}S^2\big)_x
-\big(\frac{c_1}{\alpha}S^2\big)_x\big]
+(w-z)S^2\big[(\frac{a_2}{c_2-c_1})_t+\frac{c_1}{\alpha}(\frac{a_2}{c_2-c_1})_x\big]\\
&=\frac{2a_2(a_2-a_1)S^2}{\alpha}\big(v+\frac{Rw-Sz}{c_2-c_1}\big)
+\frac{w-z}{\alpha(c_2-c_1)}(2a_1a_2+\alpha B_2)S^3-\frac{2a_1a_2(w-z)}{\alpha(c_2-c_1)}R^2S\\
&\quad+\frac{a_2(\alpha\partial_x c_1-c_1\partial_x \alpha)}{\alpha^2(c_2-c_1)}(w-z)S^2+[\frac{\partial_x c_1-\partial_x c_2}{\alpha(c_2-c_1)}+\frac{\partial_x \alpha}{\alpha^2}]a_2 S^2 w+\frac{a_2}{\alpha}S^2z_x\\
&\quad +\frac{c_1\partial_x a_2-a_2\partial_x c_1}{\alpha(c_2-c_1)}(w-z)S^2
+\frac{2a_2c_1b(w-z)}{\alpha(c_2-c_1)}RS-\frac{2a_2}{\alpha}SS_x(w-z).
\end{split}
\end{equation}
The other terms of $J_3^-$ can be estimated similarly. On the one hand, using \eqref{R-S-eqn} and \eqref{wz}  we have
\begin{equation}\label{rtest3}
\begin{split}
&\Big[-\frac{a_1+a_2}{c_2-c_1}(w-z)RS\Big]_t
+\Big[-\frac{c_1}{\alpha}\frac{a_1+a_2}{c_2-c_1}(w-z)RS\Big]_x\\
&=-\frac{a_1+a_2}{c_2-c_1}RS(w_t+\frac{c_1}{\alpha}w_x)
+\frac{a_1+a_2}{c_2-c_1}RS(z_t+\frac{c_2}{\alpha}z_x)
-\frac{a_1+a_2}{\alpha}RSz_x\\
&\quad-\frac{a_1+a_2}{c_2-c_1}(w-z)S\big[R_t+\frac{c_1}{\alpha}R_x\big]
-\frac{a_1+a_2}{c_2-c_1}(w-z)R\big[S_t+\frac{c_2}{\alpha}S_x\big]\\
&\quad+\frac{a_1+a_2}{\alpha}(w-z)RS_x
-(w-z)RS\big[(\frac{a_1+a_2}{c_2-c_1})_t+\big(\frac{c_1}{\alpha}\frac{a_1+a_2}{c_2-c_1}\big)_x\big]\\
&=\frac{2(a_1^2-a_2^2)}{\alpha}RS\big(v+\frac{Rw-Sz}{c_2-c_1}\big)
+\frac{w-z}{\alpha(c_2-c_1)}(2a_2^2-3a_1^2-a_1a_2)RS^2
-\frac{a_1+a_2}{\alpha}RSz_x
\\
&\quad+\frac{w-z}{\alpha(c_2-c_1)}(2a_2^2+a_1^2+3a_1a_2)R^2S
+\frac{a_1(a_1+a_2)}{\alpha(c_2-c_1)}(w-z)R^3-\frac{a_2(a_1+a_2)}{\alpha(c_2-c_1)}(w-z)S^3\\
&\quad+\frac{(a_1+a_2)RS}{\alpha^2(c_2-c_1)}\Big[2(c_1\partial_x \alpha-\alpha\partial_x c_1)w+\big(\alpha(\partial_x c_1+\partial_x c_2)-(c_2+c_1)\partial_x\alpha\big)z\Big]\\
&\quad -\frac{a_1+a_2}{\alpha(c_2-c_1)}(w-z)\big[(a_1+a_2)R^2S+c_1bR^2
+c_2bS^2-(d_1+d_2)RS\big]
\\
&\quad-\frac{c_1(w-z)}{\alpha(c_2-c_1)^2}\big[(c_2-c_1)\partial_x(a_1+a_2)
-(a_1+a_2)\partial_x(c_2-c_1)\big]RS+\frac{a_1+a_2}{\alpha}RS_x(w-z)\\
&\quad -\frac{w-z}{\alpha(c_2-c_1)^2}\big[(c_2-c_1)\partial_u(a_1+a_2)
-(a_1+a_2)\partial_u(c_2-c_1)\big]RS^2.
\end{split}
\end{equation}
On the other hand, by \eqref{R-S-eqn} and \eqref{wz} we obtain
\begin{equation}\label{rtest4}
\begin{split}
&\Big[\frac{c_2b(w-z)}{c_2-c_1}S\Big]_t
+\Big[\frac{c_1}{\alpha}\frac{c_2b(w-z)}{c_2-c_1}S\Big]_x\\
&=\frac{c_2bS}{c_2-c_1}(w_t+\frac{c_1}{\alpha}w_x)
-\frac{c_2bS}{c_2-c_1}(z_t+\frac{c_2}{\alpha}z_x)+\frac{c_2bS}{\alpha}z_x
+\frac{c_2b(w-z)}{c_2-c_1}\big(S_t+\frac{c_2}{\alpha}S_x\big)\\
&\quad-\frac{c_2bS_x(w-z)}{\alpha}
+(w-z)S\big[(\frac{c_2b}{c_2-c_1})_t+\big(\frac{c_1}{\alpha}\frac{c_2b}{c_2-c_1}\big)_x\big]\\
&=\frac{2c_2b(a_2-a_1)S}{\alpha}\big(v+\frac{Rw-Sz}{c_2-c_1}\big)
+\frac{c_2b(w-z)}{\alpha(c_2-c_1)}(4a_1-a_2)S^2
+\frac{c_2b}{\alpha}Sz_x\\
&\quad+\frac{(c_2-c_1)\partial_u(c_2b)-c_2b\partial_u (c_2-c_1)}{\alpha(c_2-c_1)^2}(w-z)S^2
-\frac{2c_2b(a_1+a_2)(w-z)}{\alpha(c_2-c_1)}RS
\\
&\quad -\frac{a_1c_2b(w-z)}{\alpha(c_2-c_1)}R^2+\frac{c_2b(w-z)}{\alpha(c_2-c_1)}[(a_1+a_2)RS+c_1bR-d_2S]
\\
&\quad+\frac{c_2bS}{\alpha^2(c_2-c_1)}\big[2(\alpha\partial_xc_1-c_1\partial_x \alpha)w-\big(\alpha\partial_x(c_1+c_2)-(c_1+c_2)\partial_x \alpha\big)z\big]\\
&\quad+\frac{c_1(w-z)S}{\alpha(c_2-c_1)^2}\big[(c_2-c_1)\partial_x(c_2b)-c_2b\partial_x(c_2-c_1) \big]-\frac{c_2b}{\alpha}S_x(w-z).
\end{split}
\end{equation}
In view of \eqref{rseq}, combining \eqref{rtest1}--\eqref{rtest4} yields
\begin{equation}\label{rtest5}
\begin{split}
&\hat{r}_t+\big(\frac{c_1}{\alpha}\hat{r}\big)_x\\
&=(v+\frac{Rw-SZ}{c_2-c_1})\Big(B_1R^2-(B_1+B_2)RS+B_2S^2+B_3S
-B_4R-\frac{2a_2(a_1-a_2)}{\alpha}S^2\Big)
\\
&\quad+(v+\frac{Rw-Sz}{c_2-c_1})\Big(\frac{2(a_1^2-a_2^2)}{\alpha}RS-\frac{2(a_1-a_2)c_2b}{\alpha}S\Big)
+\frac{c_2b-(a_1+a_2)R}{\alpha}\hat{s}
\\
&\quad+\Big(\frac{(a_1-a_2)S-d_1}{\alpha}+\frac{\alpha\partial_x c_1-c_1\partial_x\alpha}{\alpha^2}\Big)\hat{r}+\frac{a_2}{\alpha}(2S\hat{s}+S^2z_x-\frac{2a_2(w-z)}{c_2-c_1}RS^2)\\
&\quad +\frac{c_2b-(a_1+a_2)R}{\alpha}\big(Sz_x-\frac{2a_2(w-z)}{c_2-c_1}RS\big)
+\frac{R^2S(w-z)}{\alpha(c_2-c_1)}[a_1^2+a_1a_2-\alpha B_1]\\
&\quad+A_1wR^2-\frac{RS^2(w-z)}{\alpha(c_2-c_1)}[a_2^2+3a_1a_2]
+\frac{(a_1+a_2)RS}{\alpha(c_2-c_1)}[w\partial_x c_2-z\partial_x c_1-d_2(w-z)]\\
&\quad+\frac{RS}{\alpha(c_2-c_1)}\big[(c_1z-c_2w)\partial_x(a_1+a_2)
+(\alpha B_4-2a_1c_2b)(w-z)\big]-A_4wR\\
&\quad
+\frac{S}{\alpha(c_2-c_1)}\big[(c_2w-c_1z)\partial_x(c_2b)
+(d_1+d_2)c_2b(w-z)+c_2b(z\partial_x c_1-w\partial_x c_2)\big]\\
&\quad+\frac{S^2}{\alpha(c_2-c_1)}\big[(c_2w-c_1z)\partial_x a_2
+a_2(d_1+c_2b)(w-z)+a_2(z\partial_x c_1-w\partial_x c_2)\big].
\end{split}
\end{equation}
Hence, we can conclude that
\begin{equation*}
\begin{split}
&\frac{d}{dt}\int_\mathbb{R}J_3^-\mathcal{W}^-\,dx = \frac{d}{dt}\int_\mathbb{R}\big|\hat{r}\big|\mathcal{W}^-\,dx \\
&\leq C\int_\mathbb{R} |w|(|R|+|S|+R^2+S^2+|RS|+|R^2S|+|RS^2|)\mathcal{W}^-\,dx\\
&\quad+C\int_\mathbb{R} |z|(|S|+S^2+|RS|+|R^2S|+|RS^2|)\mathcal{W}^+\,dx+C\int_\mathbb{R} |\hat{r}|(1+|S|)\mathcal{W}^-\,dx\\
&\quad+C\int_\mathbb{R} |\hat{s}|(1+|R|)\mathcal{W}^+\,dx+C\int_\mathbb{R} \Big|Sz_x-\frac{2a_2(w-z)}{c_2-c_1}RS\Big|(1+|R|)\mathcal{W}^+\,dx\\
&\quad
+C\int_\mathbb{R} \Big|2S\hat{s}+S^2z_x-\frac{2a_2(w-z)}{c_2-c_1}RS^2\Big|\mathcal{W}^+\,dx
+C\int_\mathbb{R}  \Big|v+\frac{Rw-Sz}{c_2-c_1}\Big|(R^2+|R|)\mathcal{W}^-\,dx\\
&\quad
+C\int_\mathbb{R}  \Big|v+\frac{Rw-Sz}{c_2-c_1}\Big|(S^2+|S|)\mathcal{W}^+\,dx+G_1(t)\int_\mathbb{R} \big|\hat{r}\big|\mathcal{W}^-\,dx
-\frac{2\gamma_1}{\alpha_2}\int_\mathbb{R} \big|\hat{r}\big|S^2\mathcal{W}^-\,dx.
\end{split}
\end{equation*}
Applying the similar idea used above to get
\begin{equation}\label{I3est}
\begin{split}
\frac{d}{dt}I_3
\leq&  C\sum_{k=2,3,5,6}\int_\mathbb{R}\Big((1+|S|)J_k^-\mathcal{W}^-
+(1+|R|)J_k^+\mathcal{W}^+\Big)\,dx\\
&+
C\int_\mathbb{R}\Big((1+S^2)J_1^-\mathcal{W}^-
+(1+R^2)J_1^+\mathcal{W}^+\Big)\,dx\\
&+\int_\mathbb{R} \Big(G_1(t)J_3^-\mathcal{W}^-+G_2(t)J_3^+\mathcal{W}^+\Big)\,dx-\frac{2\gamma_1}{\alpha_2}\int_\mathbb{R} \Big(S^2J_3^-\mathcal{W}^-+R^2J_3^+\mathcal{W}^+\Big)\,dx.
\end{split}
\end{equation}

\bigskip
\paragraph{\bf 4}  Consider $I_4$, we first differentiate $\eqref{wz}_1$ with respect to $x$ to get
\begin{equation}\label{wxt}
\begin{split}
w_{tx}&+\big(\frac{c_1}{\alpha}w_x\big)_x=-\frac{2a_1}{\alpha}(r+R_xw)
+\frac{2a_1}{\alpha}(s+S_xw)-\frac{2a_1}{\alpha}(R-S)w_x
\\
&-\frac{2a_1}{\alpha}(\partial_x c_2-\partial_x c_1)v-2(c_2-c_1)(A_1+B_1\frac{R-S}{c_2-c_1})(v+\frac{R-S}{c_2-c_1}w)\\
&+
\frac{\alpha \partial_xc_1-c_1\partial_x\alpha}{\alpha^2}w_x+\big(\frac{\alpha \partial_xc_1-c_1\partial_x\alpha}{\alpha^2}\big)_xw.
\end{split}\end{equation}
With this help, utilizing the estimates \eqref{R-S-eqn}, \eqref{wz} and \eqref{wxt}, we can derive
\begin{equation}\label{wxtest}
\begin{split}
&\Big[w_x+\frac{2a_1(w-z)}{c_2-c_1}S\Big]_t
+\Big[\frac{c_1}{\alpha}\big(w_x+\frac{2a_1(w-z)}{c_2-c_1}S\big)\Big]_x\\
&=w_{xt}+\big(\frac{c_1}{\alpha}w_x\big)_x+\frac{2a_1S}{c_2-c_1}(w_t+\frac{c_1}{\alpha}w_x)
-\frac{2a_1S}{c_2-c_1}(z_t+\frac{c_2}{\alpha}z_x)+\frac{2a_1S}{\alpha}z_x
\\
&\quad+\frac{2a_1(w-z)}{c_2-c_1}\big(S_t+\frac{c_2}{\alpha}S_x\big)-\frac{2a_1(w-z)}{\alpha}S_x
+2(w-z)S\big[(\frac{a_1}{c_2-c_1})_t+(\frac{c_1}{\alpha}\frac{a_1}{c_2-c_1})_x\big]\\
&=-\frac{2a_1}{\alpha}\hat{r}+\frac{2a_1}{\alpha}\hat{s}
-\frac{2a_1}{\alpha}\big(Rw_x+\frac{2a_1(w-z)}{c_2-c_1}RS\big)+
\frac{2a_1}{\alpha}\big(Sz_x-\frac{2a_2(w-z)}{c_2-c_1}RS\big)\\
&\quad-2\Big[A_1(c_2-c_1)+B_1(R-S)+\frac{2a_1S}{\alpha}(a_1-a_2)
+\frac{a_1(\partial_x c_2-\partial_x c_1)}{\alpha}\Big]\big(v+\frac{Rw-Sz}{c_2-c_1}\big)
\\
&\quad+\big(\frac{2a_1S}{\alpha}+\frac{\alpha\partial_x c_1-c_1\partial_x\alpha}{\alpha^2}\big)\big(w_x+\frac{2a_1(w-z)}{c_2-c_1}S\big)
+\big(\frac{\alpha\partial_x c_1-c_1\partial_x \alpha}{\alpha^2}\big)_xw\\
&\quad +\frac{2a_1(\partial_x c_2-\partial_x c_1)}{\alpha(c_2-c_1)}Rw
 +\frac{2a_1Sz}{\alpha(c_2-c_1)}[2\partial_x(c_1-c_2)
-\frac{c_1-c_2}{\alpha}\partial_x \alpha]\\
&\quad+\frac{2(w-z)S}{\alpha(c_2-c_1)}\big[2a_1a_2S-a_1(a_1+a_2)R+a_1(d_2+c_2b-\partial_x c_2)+\alpha B_1R+c_2\partial_x a_1\big]
.
\end{split}
\end{equation}
This in turn yields the estimate
\begin{equation*}
\begin{split}
&\frac{d}{dt}\int_\mathbb{R}J_4^-\mathcal{W}^-\,dx = \frac{d}{dt}\int_\mathbb{R}\Big|w_x+\frac{2a_1(w-z)}{c_2-c_1}S\Big|\mathcal{W}^-\,dx \\
&\leq C\int_\mathbb{R} |w|(1+|R|+|S|+S^2+|RS|)\mathcal{W}^-\,dx+C\int_\mathbb{R} |z|(|S|+S^2+|RS|)\mathcal{W}^+\,dx\\
&\quad+C\int_\mathbb{R} |\hat{r}|\mathcal{W}^-\,dx+C\int_\mathbb{R} |\hat{s}|\mathcal{W}^+\,dx+C\int_\mathbb{R} \Big|Rw_x+\frac{2a_1(w-z)}{c_2-c_1}RS\Big|\mathcal{W}^-\,dx\\
&\quad+C\int_\mathbb{R} \Big|Sz_x-\frac{2a_2(w-z)}{c_2-c_1}RS\Big|\mathcal{W}^+\,dx+C\int_\mathbb{R} \Big|w_x+\frac{2a_1(w-z)}{c_2-c_1}S\Big|(1+|S|)\mathcal{W}^-\,dx\\
&\quad
+C\int_\mathbb{R}  \Big|v+\frac{Rw-Sz}{c_2-c_1}\Big|(1+|R|)\mathcal{W}^-\,dx+C\int_\mathbb{R}  \Big|v+\frac{Rw-Sz}{c_2-c_1}\Big||S|\mathcal{W}^+\,dx\\
&\quad
+G_1(t)\int_\mathbb{R} \Big|w_x+\frac{2a_1(w-z)}{c_2-c_1}S\Big|\mathcal{W}^-\,dx
-\frac{2\gamma_1}{\alpha_2}\int_\mathbb{R} \Big|w_x+\frac{2a_1(w-z)}{c_2-c_1}S\Big|S^2\mathcal{W}^-\,dx.
\end{split}
\end{equation*}
Thus, we can get
\begin{equation}\label{I4est}
\begin{split}
\frac{d}{dt}I_4
\leq&  C\sum_{k=2,3,4,5}\int_\mathbb{R}\Big((1+|S|)J_k^-\mathcal{W}^-
+(1+|R|)J_k^+\mathcal{W}^+\Big)\,dx\\
&+
C\int_\mathbb{R}\Big((1+S^2)J_1^-\mathcal{W}^-
+(1+R^2)J_1^+\mathcal{W}^+\Big)\,dx\\
&+\int_\mathbb{R} \Big(G_1(t)J_4^-\mathcal{W}^-+G_2(t)J_4^+\mathcal{W}^+\Big)\,dx-\frac{2\gamma_1}{\alpha_2}\int_\mathbb{R} \Big(S^2J_4^-\mathcal{W}^-+R^2J_4^+\mathcal{W}^+\Big)\,dx.
\end{split}
\end{equation}

\bigskip
\paragraph{\bf 5}  Next, we deal with the time derivative of $I_5$. By \eqref{R-S-eqn} and \eqref{wxtest}, it holds that
\begin{equation*}
\begin{split}
&\Big[R\big(w_x+\frac{2a_1(w-z)}{c_2-c_1}S\big)\Big]_t
+\Big[\frac{c_1}{\alpha}R\big(w_x+\frac{2a_1(w-z)}{c_2-c_1}S\big)\Big]_x\\
&=(R_t+\frac{c_1}{\alpha}R_x)\big(w_x+\frac{2a_1(w-z)}{c_2-c_1}S\big)\\
&\quad+R\Big[\big(w_x+\frac{2a_1(w-z)}{c_2-c_1}S\big)_t+\big(\frac{c_1}{\alpha}(w_x+\frac{2a_1(w-z)}{c_2-c_1}S)\big)_x\Big]\\
&=-\frac{a_1}{\alpha}[2R\hat{r}+R^2w_x+\frac{2a_1(w-z)}{c_2-c_1}R^2S]
+\frac{2a_1}{\alpha}R\hat{s}+
\frac{2a_1R}{\alpha}\big(Sz_x-\frac{2a_2(w-z)}{c_2-c_1}RS\big)\\
&\quad+\big(\frac{a_1-a_2}{\alpha}S-\frac{d_1}{\alpha}
+\frac{\alpha\partial_x c_1-c_1\partial_x\alpha}{\alpha^2}\big)\big(Rw_x+\frac{2a_1(w-z)}{c_2-c_1}RS\big)\\
&\quad-2R\Big[A_1(c_2-c_1)+B_1(R-S)+\frac{2a_1S}{\alpha}(a_1-a_2)
+\frac{a_1(\partial_x c_2-\partial_x c_1)}{\alpha}\Big]\big(v+\frac{Rw-Sz}{c_2-c_1}\big)
\\
&\quad+\frac{S}{\alpha}(a_2S+c_2b)\big(w_x+\frac{2a_1(w-z)}{c_2-c_1}S\big)
+\big(\frac{\alpha\partial_x c_1-c_1\partial_x \alpha}{\alpha^2}\big)_xRw\\
&\quad +\frac{2a_1(\partial_x c_2-\partial_x c_1)}{\alpha(c_2-c_1)}R^2w
 +\frac{2a_1RSz}{\alpha(c_2-c_1)}[2\partial_x(c_1-c_2)
-\frac{c_1-c_2}{\alpha}\partial_x \alpha]\\
&\quad+\frac{2(w-z)RS}{\alpha(c_2-c_1)}\big[2a_1a_2S-a_1(a_1+a_2)R+a_1(d_2+c_2b-\partial_x c_2)+\alpha B_1R+c_2\partial_x a_1\big].
\end{split}
\end{equation*}
This immediately gives that
\begin{equation*}
\begin{split}
&\frac{d}{dt}\int_\mathbb{R}J_5^-\mathcal{W}^-\,dx = \frac{d}{dt}\int_\mathbb{R}\Big|Rw_x+\frac{2a_1(w-z)}{c_2-c_1}RS\Big|\mathcal{W}^-\,dx \\
&\leq C\int_\mathbb{R} |w|(|R|+|RS|+|R^2|+|R^2S|+|RS^2|)\mathcal{W}^-\,dx\\
&\quad+C\int_\mathbb{R} |z|(|RS|+|RS^2|+|R^2S|)\mathcal{W}^+\,dx+C\int_\mathbb{R}  \Big|v+\frac{Rw-Sz}{c_2-c_1}\Big|(|R|+R^2+|RS|)\mathcal{W}^-\,dx \\
&\quad+C\int_\mathbb{R} |\hat{s}||R|\mathcal{W}^+\,dx+C\int_\mathbb{R} \Big|2R\hat{r}+R^2w_x+\frac{2a_1(w-z)}{c_2-c_1}R^2S\Big|\mathcal{W}^-\,dx
\\
&\quad+C\int_\mathbb{R} \Big|Sz_x-\frac{2a_2(w-z)}{c_2-c_1}RS\Big||R|\mathcal{W}^+\,dx+C\int_\mathbb{R} \Big|Rw_x+\frac{2a_1(w-z)}{c_2-c_1}RS\Big|(1+|S|)\mathcal{W}^-\,dx\\
&\quad+C\int_\mathbb{R} \Big|w_x+\frac{2a_1(w-z)}{c_2-c_1}S\Big|(|S|+S^2)\mathcal{W}^-\,dx+G_1(t)\int_\mathbb{R} \Big|Rw_x+\frac{2a_1(w-z)}{c_2-c_1}RS\Big|\mathcal{W}^-\,dx\\
&\quad
-\frac{2\gamma_1}{\alpha_2}\int_\mathbb{R} \Big|Rw_x+\frac{2a_1(w-z)}{c_2-c_1}RS\Big|S^2\mathcal{W}^-\,dx.
\end{split}
\end{equation*}
Similarly, we can bound the remaining term of $I_5$. We thus conclude
\begin{equation}\label{I5est}
\begin{split}
\frac{d}{dt}I_5
\leq&  C\sum_{k=2,3,5,6}\int_\mathbb{R}\Big((1+|S|)J_k^-\mathcal{W}^-
+(1+|R|)J_k^+\mathcal{W}^+\Big)\,dx\\
&+
C\sum_{i=1,4}\int_\mathbb{R}\Big((1+S^2)J_i^-\mathcal{W}^-
+(1+R^2)J_i^+\mathcal{W}^+\Big)\,dx\\
&+\int_\mathbb{R} \Big(G_1(t)J_5^-\mathcal{W}^-+G_2(t)J_5^+\mathcal{W}^+\Big)\,dx-\frac{2\gamma_1}{\alpha_2}\int_\mathbb{R} \Big(S^2J_5^-\mathcal{W}^-+R^2J_5^+\mathcal{W}^+\Big)\,dx.
\end{split}
\end{equation}

\bigskip
\paragraph{\bf 6}  Finally, we repeat the same procedure on $I_6$.
Using \eqref{R-S-eqn}, \eqref{balance1}, \eqref{rtest5} and \eqref{wxt}, we achieve
\begin{equation}\label{2Rest1}
\begin{split}
&\big[2R\hat{r}+R^2w_x\big]_t+\big[\frac{c_1}{\alpha}(2R\hat{r}+R^2w_x)\big]_x\\
&=2R\Big(\hat{r}_t+\big(\frac{c_1}{\alpha}\hat{r}\big)_x\Big)
+2\hat{r}(R_t+\frac{c_1}{\alpha}R_x)+R^2[w_{xt}+(\frac{c_1}{\alpha}w_x)_x]
+w_x[(R^2)_{t}+\frac{c_1}{\alpha}\big(R^2\big)_x]\\
&=2(v+\frac{Rw-Sz}{c_2-c_1})\Big[B_2RS^2-B_2R^2S+B_3RS-B_4R^2-A_1(c_2-c_1)R^2
+\frac{2(a_1^2-a_2^2)}{\alpha}R^2S\\
&\quad
-\frac{2a_2(a_1-a_2)}{\alpha}RS^2-\frac{a_1(\partial_x c_2-\partial_x c_1)}{\alpha}R^2+\frac{2c_2b(a_2-a_1)}{\alpha}RS\Big]+\frac{2\hat{r}}{\alpha}(a_2S^2+c_2bS)
\\
&\quad-\frac{2\hat{s}}{\alpha}(a_2R^2-c_2bR)+\big(\frac{c_1\partial_x c_2-c_2\partial_xc_1}{\alpha(c_2-c_1)}-\frac{2a_2S}{\alpha}\big) \big(2R\hat{r}+R^2w_x+\frac{2a_1(w-z)}{c_2-c_1}R^2S\big)
\\
&\quad+\frac{2a_2R}{\alpha} \big(2S\hat{s}+S^2z_x-\frac{2a_2(w-z)}{c_2-c_1}RS^2\big)
+\frac{2c_2bS+2a_2S^2}{\alpha}\big(Rw_x+\frac{2a_1(w-z)}{c_2-c_1}RS\big)\\
&\quad+\frac{2c_2bR-2(a_1+a_2)R^2}{\alpha}\big(Sz_x-\frac{2a_2(w-z)}{c_2-c_1}RS\big)
+\frac{2a_1(w-z)}{\alpha}R^2S_x+\frac{2a_1^2(w-z)}{\alpha(c_2-c_1)}R^4\\
&\quad+\frac{2a_1(\partial_x c_2-\partial_x c_1)}{\alpha(c_2-c_1)}R^3w-\frac{2a_1c_1b(w-z)}{\alpha(c_2-c_1)}R^3
+2A_1wR^3-2A_4wR^2-\frac{4a_1a_2(w-z)}{\alpha(c_2-c_1)}RS^3\\
&\quad+\frac{2RS}{\alpha(c_2-c_1)}\big[(c_2w-c_1z)\partial_x(c_2b)
+c_2b(d_1+d_2)(w-z)+c_2b(z\partial_xc_1-w\partial_xc_2)\big]\\
&\quad +\frac{2RS^2}{\alpha(c_2-c_1)}\big[(c_2w-c_1z)\partial_x a_2+(a_2(d_1+c_2b)-2a_1c_2b)(w-z)+a_2(z\partial_xc_1-w\partial_xc_2)\big]\\
&\quad +\frac{2R^2S}{\alpha(c_2-c_1)}\Big[(c_1z-c_2w)\partial_x (a_1+a_2)-a_1(\partial_xc_2-\partial_xc_1)z+\alpha(B_4+A_1(c_2-c_1))(w-z)\\
&\quad-a_1(c_2b+\frac{c_1\partial_x c_2-c_2\partial_x c_1}{c_2-c_1})(w-z)\Big]
+\frac{2(a_1+a_2)}{\alpha(c_2-c_1)}R^2S[w\partial_xc_2-z\partial_xc_1-d_2(w-z)]\\
&\quad-\frac{2(w-z)}{\alpha(c_2-c_1)}R^2S^2[\alpha B_1+a_2^2]-\frac{2(w-z)}{\alpha(c_2-c_1)}R^3S(a_1^2+a_1a_2)
+\big(\frac{\alpha\partial_x c_1-c_1\partial_x\alpha )}{\alpha^2}\big)_x R^2w.
\end{split}
\end{equation}
Moreover, it follows from \eqref{R-S-eqn}, \eqref{balance1} and \eqref{wz} that
\begin{equation}\label{2Rest2}
\begin{split}
&\Big[\frac{2a_1(w-z)}{c_2-c_1}R^2S\Big]_t
+\Big[\frac{c_1}{\alpha}\frac{2a_1(w-z)}{c_2-c_1}R^2S\Big]_x\\
&=\frac{2a_1R^2S}{c_2-c_1}(w_t+\frac{c_1}{\alpha}w_x)
-\frac{2a_1R^2S}{c_2-c_1}(z_t+\frac{c_2}{\alpha}z_x)+\frac{2a_1R^2S}{\alpha}z_x
-\frac{2a_1(w-z)}{\alpha}R^2S_x\\
&\quad+\frac{2a_1R^2(w-z)}{c_2-c_1}\big(S_t+\frac{c_2}{\alpha}S_x\big)+\frac{2a_1S(w-z)}{c_2-c_1}\big[(R^2)_t+(\frac{c_1}{\alpha}R^2)_x\big]
\\
&\quad+2(w-z)R^2S\big[(\frac{a_1}{c_2-c_1})_t+\frac{c_1}{\alpha}\big(\frac{a_1}{c_2-c_1}\big)_x\big]\\
&=\frac{4a_1(a_2-a_1)R^2S}{\alpha}\big(v+\frac{Rw-Sz}{c_2-c_1}\big)
+\frac{2a_1R^2}{\alpha}\big(Sz_x-\frac{2a_2(w-z)}{c_2-c_1}RS\big)
-\frac{2a_1(w-z)}{\alpha}R^2S_x\\
&\quad+
\frac{2R^2S^2(w-z)}{\alpha(c_2-c_1)}(\alpha B_1-a_1a_2)+\frac{2a_1R^2S}{\alpha(c_2-c_1)}
\big[\frac{\alpha\partial_xc_1-c_1\partial_x\alpha}{\alpha}w
-\frac{\alpha\partial_xc_2-c_2\partial_x\alpha}{\alpha}z\big]\\
&\quad -\frac{2R^2S(w-z)}{\alpha(c_2-c_1)}[a_1d_2+a_1\partial_x c_1-c_1\partial_x a_1]+\frac{4a_1a_2(w-z)}{\alpha(c_2-c_1)}RS^3
+\frac{4a_1c_2b(w-z)}{\alpha(c_2-c_1)}RS^2\\
&\quad
+\frac{2a_1(a_1+a_2)(w-z)}{\alpha(c_2-c_1)}R^3S
+\frac{2a_1c_1b(w-z)}{\alpha(c_2-c_1)}R^3
-\frac{2a_1^2(w-z)}{\alpha(c_2-c_1)}R^4.
\end{split}
\end{equation}
Combining \eqref{2Rest1} and \eqref{2Rest2}, we obtain
\begin{equation*}
\begin{split}
&\big[2R\hat{r}+R^2w_x+\frac{2a_1(w-z)}{c_2-c_1}R^2S\big]_t
+\big[\frac{c_1}{\alpha}\big(2R\hat{r}+R^2w_x+\frac{2a_1(w-z)}{c_2-c_1}R^2S\big)\big]_x\\
&=2(v+\frac{Rw-Sz}{c_2-c_1})\Big[B_2(RS^2-R^2S)+B_3RS-B_4R^2-A_1(c_2-c_1)R^2
-\frac{2(a_1-a_2)^2}{\alpha}R^2S\\
&\quad
-\frac{2a_2(a_1-a_2)}{\alpha}RS^2-\frac{a_1(\partial_x c_2-\partial_x c_1)}{\alpha}R^2+\frac{2c_2b(a_2-a_1)}{\alpha}RS\Big]+\frac{2\hat{r}}{\alpha}(a_2S^2+c_2bS)
\\
&\quad-\frac{2\hat{s}}{\alpha}(a_2R^2-c_2bR)+\big(\frac{c_1\partial_x c_2-c_2\partial_xc_1}{\alpha(c_2-c_1)}-\frac{2a_2S}{\alpha}\big) \big(2R\hat{r}+R^2w_x+\frac{2a_1(w-z)}{c_2-c_1}R^2S\big)
\\
&\quad+\frac{2a_2R}{\alpha} \big(2S\hat{s}+S^2z_x-\frac{2a_2(w-z)}{c_2-c_1}RS^2\big)
+\frac{2c_2bS+2a_2S^2}{\alpha}\big(Rw_x+\frac{2a_1(w-z)}{c_2-c_1}RS\big)\\
&\quad+\frac{2c_2bR-2a_2R^2}{\alpha}\big(Sz_x-\frac{2a_2(w-z)}{c_2-c_1}RS\big)
-\frac{2(w-z)}{\alpha(c_2-c_1)}R^2S^2[a_1a_2+a_2^2]
\\
&\quad +\frac{2R^2S}{\alpha(c_2-c_1)}\Big[(c_1-c_2)z\partial_x a_1+(c_1z-c_2w)\partial_xa_2+(\alpha B_4-a_1c_2b-a_2d_2)(w-z)\Big]\\
&\quad+\frac{2a_1R^2S}{\alpha(c_2-c_1)}\Big[(\partial_xc_1-\partial_xc_2)z
+\frac{(c_2-c_1)\partial_x \alpha}{\alpha}z\Big]+\frac{2a_2R^2S}{\alpha(c_2-c_1)}[w\partial_xc_2-z\partial_xc_1]
\end{split}
\end{equation*}
\begin{equation*}
\begin{split}
&\quad +\frac{2RS^2}{\alpha(c_2-c_1)}\big[(c_2w-c_1z)\partial_x a_2+a_2(d_1+c_2b)(w-z)+a_2(z\partial_xc_1-w\partial_xc_2)\big]\\
&\quad+\frac{2RS}{\alpha(c_2-c_1)}\big[(c_2w-c_1z)\partial_x(c_2b)
+c_2b(d_1+d_2)(w-z)+c_2b(z\partial_xc_1-w\partial_xc_2)\big]\\
&\quad
-\frac{\alpha\partial_u(\alpha\partial_x c_1-c_1\partial_x \alpha)-2(\alpha\partial_x c_1-c_1\partial_x\alpha)\partial_u \alpha}{\alpha^3}R^2Sw-2A_4R^2w\\
&\quad+\frac{\alpha\partial_x(\alpha\partial_x c_1-c_1\partial_x \alpha)-2(\alpha\partial_x c_1-c_1\partial_x\alpha)\partial_x \alpha}{\alpha^3}R^2w.\\
\end{split}
\end{equation*}
Thus, we can conclude
\begin{equation*}
\begin{split}
&\frac{d}{dt}\int_\mathbb{R}J_6^-\mathcal{W}^-\,dx = \frac{d}{dt}\int_\mathbb{R}\Big|2R\hat{r}+R^2w_x+\frac{2a_1(w-z)}{c_2-c_1}R^2S\Big|\mathcal{W}^-\,dx \\
&\leq C\int_\mathbb{R} |w|(|RS|+|R^2|+|R^2S|+|RS^2|+R^2S^2)\mathcal{W}^-\,dx+C\int_\mathbb{R} |\hat{r}|(|S|+S^2)\mathcal{W}^-\,dx\\
&\quad+C\int_\mathbb{R} |z|(|RS|+|RS^2|+|R^2S|+R^2S^2)\mathcal{W}^+\,dx+C\int_\mathbb{R} |\hat{s}|(| R|+R^2)\mathcal{W}^+\,dx\\
&\quad+C\int_\mathbb{R}  \Big|v+\frac{Rw-Sz}{c_2-c_1}\Big|(R^2+|RS|+|R^2S|)\mathcal{W}^-\,dx
+C\int_\mathbb{R}  \Big|v+\frac{Rw-Sz}{c_2-c_1}\Big||RS^2|\mathcal{W}^+\,dx  \\
&\quad+C\int_\mathbb{R} \Big|Rw_x+\frac{2a_1(w-z)}{c_2-c_1}RS\Big|(|S|+S^2)\mathcal{W}^-\,dx\\
&\quad+C\int_\mathbb{R} \Big|Sz_x-\frac{2a_2(w-z)}{c_2-c_1}RS\Big|(| R|+R^2)\mathcal{W}^+\,dx\\
&\quad+C\int_\mathbb{R} \Big|2R\hat{r}+R^2w_x+\frac{2a_1(w-z)}{c_2-c_1}R^2S\Big|(1+|S|)\mathcal{W}^-\,dx
\\
&\quad+C\int_\mathbb{R} \Big|2S\hat{s}+S^2z_x-\frac{2a_2(w-z)}{c_2-c_1}RS^2\Big||R|\mathcal{W}^+\,dx
\\
&\quad+G_1(t)\int_\mathbb{R} \Big|2R\hat{r}+R^2w_x+\frac{2a_1(w-z)}{c_2-c_1}R^2S\Big|\mathcal{W}^-\,dx\\
&\quad
-\frac{2\gamma_1}{\alpha_2}\int_\mathbb{R} \Big|2R\hat{r}+R^2w_x+\frac{2a_1(w-z)}{c_2-c_1}R^2S\Big|S^2\mathcal{W}^-\,dx.
\end{split}
\end{equation*}
In a similar way, we deduce that
\begin{equation}\label{I6est}
\begin{split}
\frac{d}{dt}I_6
\leq&  C\sum_{k=2,6}\int_\mathbb{R}\Big((1+|S|)J_k^-\mathcal{W}^-
+(1+|R|)J_k^+\mathcal{W}^+\Big)\,dx\\
&+
C\sum_{i=1,3,5}\int_\mathbb{R}\Big((1+S^2)J_i^-\mathcal{W}^-
+(1+R^2)J_i^+\mathcal{W}^+\Big)\,dx\\
&+\int_\mathbb{R} \Big(G_1(t)J_6^-\mathcal{W}^-+G_2(t)J_6^+\mathcal{W}^+\Big)\,dx-\frac{2\gamma_1}{\alpha_2}\int_\mathbb{R} \Big(S^2J_6^-\mathcal{W}^-+R^2J_6^+\mathcal{W}^+\Big)\,dx.
\end{split}
\end{equation}
Combining the estimates in \eqref{I0est}, \eqref{I1est}, \eqref{I2est}, \eqref{I3est}, \eqref{I4est}, \eqref{I5est} and \eqref{I6est}, and using \eqref{4.9}, we have
\[\left.\begin{array}{l}
\displaystyle\frac{dI_k}{dt}\leq~
C\sum_{\ell\in {\mathcal F}^l_k}
\left(\int_\mathbb{R}  (1+|S|)\,J_\ell^-\,W^- \,dx+\int_\mathbb{R}
(1+|R|)J_\ell^+\,\,W^+ \,dx
\right) \\[2mm]
\displaystyle \qquad \qquad+C\sum_{\ell\in {\mathcal F}^h_k}
\left(\int_\mathbb{R} (1+ S^2)\,J_\ell^-\,W^- \,dx+\int_\mathbb{R}   (1+R^2)\,J_\ell^+\,W^+\, dx
\right)\\[2mm]
\displaystyle\qquad \qquad+\int_\mathbb{R} \Big(G_1(t)J_k^-\mathcal{W}^-+G_2(t)J_k^+\mathcal{W}^+\Big)\,dx-\frac{\gamma_1}{\alpha_2}\left(\int_\mathbb{R}    S^2\,J_k^-\,W^- \,dx+\int_\mathbb{R}   R^2\,J_k^+\,W^+ \,dx
\right).
\end{array}\right.
\]
Here $\mathcal {F}^l_k,\mathcal {F}^h_k\subset\{0,1,2,\cdots,6\}$ are suitable sets of indices from the estimates  \eqref{I0est}, \eqref{I1est}, \eqref{I2est}, \eqref{I3est}, \eqref{I4est}, \eqref{I5est} and \eqref{I6est}, where a graphical summary of ${\mathcal F}_k^h$ is illustrated in Fig. \ref{f:wa36}.
For example, by \eqref{I6est}, $\mathcal {F}^l_6=\{2,6\}$ and $\mathcal {F}^h_6=\{1,3,5\}$.

\begin{figure}[htbp]
   \centering
\includegraphics[width=0.3\textwidth]{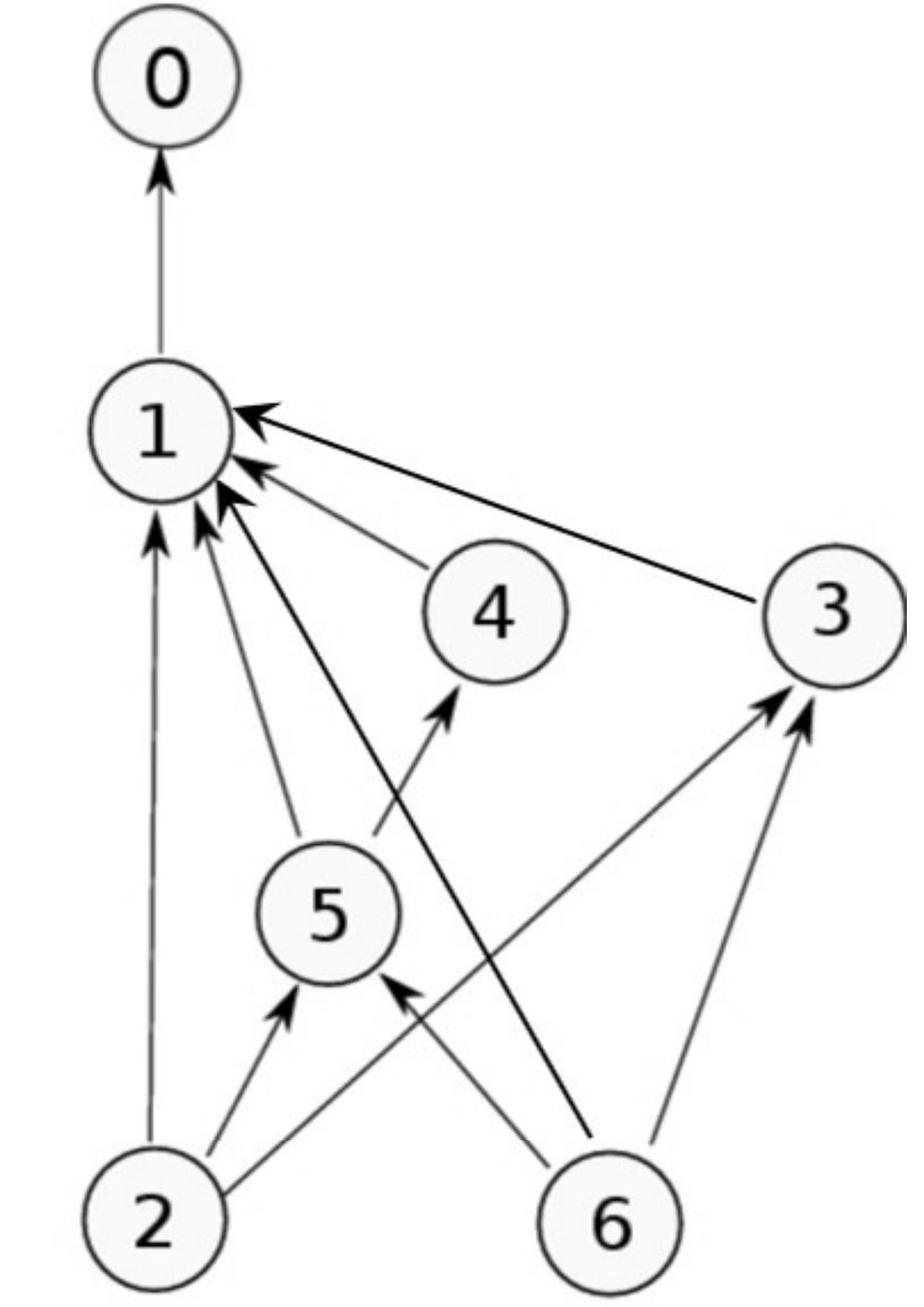}
\caption{$k\rightarrow {\mathcal F}_k^h \subset\{0,1,\cdots 6\}$ has {\bf{no cycle!}} Choose $\kappa_k$ in a certain order
($\kappa_0\gg\kappa_1\gg\kappa_3, \kappa_4\gg \kappa_5\gg \kappa_2,
\kappa_6$) to prove \eqref{est on w and z}. }
   \label{f:wa36}
\end{figure}

Since there is no cycle for the relation tree ${\mathcal F}_k^h$,
we can choose a suitable small constant $\delta>0$, with the weighted norm defined by
\begin{equation}
\|(w,\hat{r},z,\hat{s})\|_{(u,R,S)}:=I_0+\delta I_1+\delta^4 I_2+\delta^2 I_3+\delta^2 I_4+\delta^3 I_5+\delta^4 I_6,
\end{equation}
such that the desired estimate \eqref{est on w and z} holds.
This completes the proof of Lemma \ref{lem_est}.

\end{proof}

\section{Generic regularity of conservative solutions}\label{gen_sec}

For any path $\theta\mapsto u^\theta, \theta\in[0,1]$ of smooth solutions to \eqref{vwl}, Lemma \ref{lem_est} has provided a key estimate on the growth of its weighted length. However, smooth solutions do not always remain smooth for all time. In fact, the gradient of the solution can blow up in finite time, and therefore for any two solutions there may be no regular enough solution path to connect them, as shown in Fig. \ref{homotopy figure} (b).

In another word, we need to first prove the existence of regular enough transport path, so the norm \eqref{Finsler v} can be well defined.
Secondly, even if the norm is well defined, it is still not obvious that the estimate in Lemma \ref{lem_est} holds even only after finitely many singularities.
We will cope with these two issues in this and next sections, respectively.

Aim of this section is to study generic singularities to (\ref{vwl})--(\ref{ID}) and thus prove Theorem \ref{thm_reg} by an application of the Thom's Transversality theorem. Furthermore, for any two generic solutions, we show that there exist a family of regular solutions connecting them.

We divide this section into four parts.
In the first part, we review the semi-linear system used in the construction of conservative solutions to \eqref{vwl}--\eqref{ID} in \cite{H}. In the second part, we define the generic singularities. In the third part, we construct several families of perturbations of a given solution to the semi-linear system. The proof of Theorem \ref{thm_reg} is completed in the last part.

\subsection{The semi-linear system on new coordinates}
As a start, we briefly review the semi-linear system introduced in \cite{H}, which will be used in both this and next sections. Please find detail calculations and derivations in \cite{H}. Consider the equations for the forward and backward characteristics as follows
\begin{equation*}
\begin{cases}
\frac{d}{ds}x^\pm(s;x,t)=\lambda_\pm(x^\pm(s;x,t),u(s;x^\pm(s;x,t))),\\
x^\pm|_{s=t}=x,
\end{cases}
\end{equation*}
where $\lambda_\pm$ are defined in \eqref{lambda}. Then introduce a  new coordinate transformation $(x,t)\to (X,Y)$ as
\begin{equation*}
X~:=~\int_0^{x^-(0;x,t)}[1+R^2(y,0)]\,dy, \quad \text{and }\quad Y~:=~\int^0_{x^+(0;x,t)}[1+S^2(y,0)]\,dy,
\end{equation*}
which implies that
\begin{equation}\label{coor}
\alpha(x,u)X_t+c_1(x,u)X_x=0,\quad \alpha(x,u) Y_t+c_2(x,u)Y_x=0.\end{equation}
Thus, for any smooth function $f$, we have
\begin{equation}\label{2.15}
\begin{cases}
\alpha(x,u) f_t+c_2(x,u)f_x=(\alpha X_t+c_2 X_x)f_X=(c_2-c_1)X_x f_X,\\
\alpha(x,u) f_t+c_1(x,u)f_x=(\alpha Y_t+c_1Y_x)f_Y=(c_1-c_2)Y_x f_Y.
\end{cases}
\end{equation}

For convenience to deal with possibly unbounded values of $R$ and $S$, we introduce a new set of dependent variables
\begin{equation*}\label{2.16}
\begin{split}
&\ell:=\frac{ R}{1+ R^2},\quad   h:=\frac{1}{1+R^2}, \quad p:=\frac{1+R^2}{X_x},\\
&m:=\frac{ S}{1+ S^2}, \quad g:=\frac{1}{1+ S^2}, \quad q:=\frac{1+ S^2}{-Y_x}.
\end{split}\end{equation*}
Making use of  \eqref{R-S-eqn}, \eqref{coor} and the above definitions, one obtains a semi-linear hyperbolic system with smooth coefficients for the variables $\ell,m,h,g,p,q,u,x$ in $(X,Y)$ coordinates, c.f. \cite{H}
\begin{equation}\label{semi1}
\begin{cases}
 \displaystyle\ell_Y=\frac{q(2h-1)}{c_2-c_1}[a_1g+a_2h-(a_1+a_2)(g h+m\ell)+c_2 bhm-d_1g\ell],\quad\quad\quad\quad\quad\quad\quad\quad\\
 \displaystyle m_X =\frac{p(2g-1)}{c_2-c_1}[-a_1g-a_2h+(a_1+a_2)(g h+m\ell)+c_1 bg\ell-d_2hm],\\
 \end{cases}
\end{equation}
\begin{equation}\label{semi2}
\begin{cases} \displaystyle h_Y=-\frac{2q\ell}{c_2-c_1}[a_1g+a_2h-(a_1+a_2)(gh+m\ell)
+c_2bhm-d_1g\ell],\quad\quad\quad\quad\quad\quad\quad\quad\\
\displaystyle
 g_X =-\frac{2pm}{c_2-c_1}[-a_1g-a_2h+(a_1+a_2)(g h+m\ell)+c_1 bg\ell-d_2hm],\\
\end{cases}
\end{equation}
\begin{equation}\label{semi3}
\begin{cases} \displaystyle p_Y=\frac{2pq}{c_2-c_1}[a_2(\ell-m)+(a_1+a_2)(hm-g\ell)+c_2 bm\ell+d_1gh+\frac{c_1\partial_x c_2-c_2\partial_x c_1}{2(c_2-c_1)}g],\\
  \displaystyle q_X=\frac{2pq}{c_2-c_1}[a_1(\ell-m)+(a_1+a_2)(hm-g\ell)+c_1
bm\ell+d_2gh+\frac{c_1\partial_x c_2-c_2\partial_x c_1}{2(c_2-c_1)}h],\\
\end{cases}
\end{equation}
\begin{equation}\label{semi4}
\begin{cases}\displaystyle u_X=\frac{p\ell}{c_2-c_1},\quad({\text or }\quad u_Y=\frac{qm}{c_2-c_1}),\qquad\qquad\qquad\qquad\qquad\qquad\qquad\qquad\qquad\qquad\\
\displaystyle x_X=\frac{c_2}{c_2-c_1}ph,\quad({\text or } \quad x_Y=\frac{c_1}{c_2-c_1}qg).
\end{cases}
\end{equation}
Setting $f=t$ in the \eqref{2.15}, we obtain the equations for $t$,
\begin{equation}\label{2.18}
t_X=\frac{\alpha ph}{c_2-c_1},\quad t_Y=\frac{\alpha qg}{c_2-c_1}.
\end{equation}
The system \eqref{semi1}--\eqref{2.18} must now be supplemented by non-characteristic boundary conditions, corresponding to the initial data \eqref{ID}. Toward this goal,
along the curve $$\gamma_0:=\{(X,Y);~X+Y=0\}\subset\mathbb{R}^2$$ parameterized by $x\mapsto(\bar{X}(x),\bar{Y}(x))~:=~(x,-x)$, we assign the boundary data $(\bar{u},\bar{\ell}, \bar{m},\bar{h}, \bar{g},\bar{p},\bar{q})$ by setting
\begin{equation}\label{2.19}
\begin{split}
&\bar{u}=u_0(x), \quad\bar{h}=\frac{1}{1+R^2(x,0)}, \quad \bar{g}=\frac{1}{1+ S^2(x,0)},\\&\bar{\ell}=R(x,0)\bar{h}, \quad\bar{ m}= S(x,0)\bar{g},
\quad \bar{p}=1+ R^2(x,0),  \quad\bar{q}=1+S^2(x,0),
\end{split}\end{equation}
with
\begin{equation*}
\begin{split}
 R(x,0)=\alpha\big(x,u_0(x)\big) u_1(x)+c_2\big(x,u_0(x)\big)u_{x,0}(x), \\
S(x,0)=\alpha\big(x,u_0(x)\big) u_1(x)+c_1\big(x,u_0(x)\big)u_{x,0}(x).
\end{split}\end{equation*}
Obviously, the coordinate transformation $\mathcal{F}:(X,Y)\mapsto (x,t)$ maps the point $(x,-x)\in\gamma_0$ to the point $(x,0)$, for every $x\in\mathbb{R}$.

For future reference, we state the following result of the above construction in \cite{CCDS, H}.
\begin{Lemma} \label{Lemma 2.1}
Let $(u, \ell, m, h, g, p, q, x,t)$ be a smooth solution to the system \eqref{semi1}--\eqref{2.18} with $p,q>0$. Then the function $u=u(x,t)$ whose graph is
\begin{equation}\label{2.20}
\big\{(x(X,Y),t(X,Y),u(X,Y));~(X,Y)\in \mathbb{R}^2\}
\end{equation}
provide the unique conservative solution to the variational wave equation \eqref{vwl}--\eqref{con}.
\end{Lemma}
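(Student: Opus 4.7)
The plan is to verify, in sequence: (a) the map $\Phi:(X,Y)\mapsto (x(X,Y),t(X,Y))$ defined by \eqref{semi4} and \eqref{2.18} is a legitimate change of variables from the half-plane $\{X+Y\ge 0\}$ onto $\mathbb R\times\mathbb R^+$; (b) $u(X,Y)$ descends through $\Phi$ to a well-defined function of $(x,t)$ with the regularity of Definition \ref{weakdef}; (c) this $u$ satisfies \eqref{vwl} distributionally and carries energy measures $\mu_\pm^t$ fulfilling conditions (i)--(iv) of Theorem \ref{thm_ec}; (d) uniqueness is inherited directly from Theorem \ref{thm_ec}.

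For (a), I would first check the compatibility of mixed partials $(t_X)_Y=(t_Y)_X$, $(x_X)_Y=(x_Y)_X$, $(u_X)_Y=(u_Y)_X$. Each is an algebraic identity obtained by differentiating \eqref{semi4}, \eqref{2.18} and substituting from \eqref{semi2}--\eqref{semi3}, with the chain-rule derivatives of $\alpha,c_1,c_2$ accessed through \eqref{semi4} itself. Because $p,q,h,g,\alpha>0$ and $c_2-c_1>0$, \eqref{2.18} gives $t_X,t_Y>0$ pointwise, so $t$ is monotone along each of the coordinate characteristics; together with the boundary data \eqref{2.19}, this yields surjectivity of $\Phi$ onto $\mathbb R\times\mathbb R^+$. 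The Jacobian of $\Phi$ degenerates only on $\{h=0\}\cup\{g=0\}$, which is precisely where the Riemann variables $R,S$ blow up. On these fibers, the identity $(u_X)_Y=(u_Y)_X$ combined with continuity forces $u$ to be constant, so $u(x,t)$ is unambiguously defined.

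For (b) and (c), the pulled-back energy integral $\int(\Tilde R^2+\Tilde S^2)(\cdot,t)\,dx$ is bounded by the initial total energy $\mathcal E_0$ directly from \eqref{semi1}--\eqref{semi3}; the $L^2$-Lipschitz-in-$t$ and $1/2$-H\"older-in-$x$ estimates follow from standard characteristic-tracing bounds, and the initial conditions \eqref{ID} are recovered pointwise from \eqref{2.19}. The distributional form of \eqref{vwl} is verified by taking $\varphi\in C^1_c$, changing variables in the test-function integral via $\Phi$, integrating by parts in $(X,Y)$, and reducing the resulting identity to a direct consequence of \eqref{semi1}--\eqref{semi4}. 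The measures $\mu_\pm^t$ are obtained as pushforwards under $\Phi|_{\{t=\mathrm{const}\}}$ of the natural forward/backward energy measures on the $(X,Y)$ side; conditions (i), (ii), (iv) follow from the divergence-form balance laws induced by \eqref{semi1}--\eqref{semi2}, while (iii) uses \eqref{gencon} together with the observation that the singular parts of $\mu_\pm^t$ are supported on $\{h=0\}\cup\{g=0\}$, which is exactly the set where $\partial_u\lambda_\pm=0$.

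The main obstacle is the distributional identity across the degeneracy set of $\Phi$: naive integration by parts would produce boundary contributions from $\{h=0\}\cup\{g=0\}$, where the $(x,t)$-chain rule fails. Their cancellation rests on the elementary inequality $|\ell|\le\sqrt{h}$ (and similarly $|m|\le\sqrt g$), which follows from $\ell=Rh$ together with $\ell^2+h^2=h$; this forces the offending integrand to vanish to the necessary order on the singular set. Once this is settled, (d) is an immediate application of the uniqueness part of Theorem \ref{thm_ec}, yielding the claimed conservative solution to \eqref{vwl}--\eqref{con}.
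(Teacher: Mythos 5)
The paper does not actually prove Lemma~\ref{Lemma 2.1}: it is stated ``for future reference'' as a result of the construction in \cite{CCDS,H}, and the proof resides in those references. So there is no in-paper argument to compare against, and your sketch must be evaluated on its own terms. Broadly, your four-step outline (change of variables, regularity, distributional verification plus the energy-measure axioms, then uniqueness from Theorem~\ref{thm_ec}) mirrors the actual strategy of \cite{H} for existence and of \cite{CCDS} for uniqueness. The observation $\ell^2+h^2=h$, hence $|\ell|\le\sqrt h$ and $\ell=0$ whenever $h=0$, is the correct mechanism that makes $u$ well defined across the degeneracy set and tames the boundary terms in the integration by parts.

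However, there are concrete gaps. First, you assert ``$p,q,h,g,\alpha>0$'' gives $t_X,t_Y>0$ pointwise. The hypothesis of the lemma gives only $p,q>0$; by construction $h,g\ge 0$ and they vanish exactly where singularities form, so $t_X,t_Y$ are merely nonnegative. The monotonicity/surjectivity of $\Phi$ therefore needs an argument that the zero set of $h$ (resp.\ $g$) does not contain a full horizontal (resp.\ vertical) segment of positive length, which is not automatic. Second, and more seriously, your justification of condition~(iii) is wrong: you claim that $\{h=0\}\cup\{g=0\}$ ``is exactly the set where $\partial_u\lambda_\pm=0$.'' These two sets are logically unrelated: $\{h=0\}$ is where $R$ blows up, while $\partial_u\lambda_-=0$ is a condition on the coefficients at $(x,u)$. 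What (iii) actually asserts is that \emph{for a.e.\ }$t$ the singular part of $\mu_-^t$ is carried by $\{\partial_u\lambda_-=0\}$; the content is a transience estimate, derived in $(X,Y)$ variables from $\ell_Y=\tfrac{\alpha qg}{2(c_2-c_1)^2}\partial_u\lambda_-$ when $h=\ell=0$, showing that if $\partial_u\lambda_-\ne0$ the solution leaves $\{h=0\}$ instantaneously and the concentration is seen only at an exceptional set of times. Finally, you invoke \eqref{gencon} here, but Theorem~\ref{thm_ec} (and hence Lemma~\ref{Lemma 2.1}) is stated under \eqref{con} alone; \eqref{gencon} enters only for the generic regularity result, Theorem~\ref{thm_reg}.
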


As a preliminary, we examine the boundary data should satisfy some compatibility conditions. Instead of \eqref{2.19}, we can assign a more general boundary data for \eqref{semi1}--\eqref{2.18}, along a line $\gamma_\kappa=\{(X,Y); ~X+Y=\kappa\}$, say
\begin{equation}\label{2.21}
  u(s,\kappa-s)=\bar{u}(s), ~~
  \left\{
   \begin{array}{c}
  \ell(s,\kappa-s)=\bar{\ell}(s), \\
  m(s,\kappa-s)=\bar{m}(s),  \\
   \end{array}
  \right.~~
   \left\{
   \begin{array}{c}
 h(s,\kappa-s)=\bar{h}(s), \\
 g(s,\kappa-s)=\bar{g}(s),  \\
   \end{array}
    \right.~~
      \left\{
   \begin{array}{c}
p(s,\kappa-s)=\bar{p}(s), \\
q(s,\kappa-s)=\bar{q}(s),  \\
   \end{array}
  \right.\end{equation}
and
\begin{equation}\label{bcon}
x(s,\kappa-s)=\bar{x}(s),\quad t(s,\kappa-s)=\bar{t}(s).
\end{equation}
 If both equations in $\eqref{semi4}_1$ hold, then the boundary data should satisfy the compatibility condition
 \begin{equation}\label{2.22}
 \begin{split}
 \frac{d}{ds}\bar{u}(s)&=\frac{d}{ds}u(s,\kappa-s)=(u_X-u_Y)(s,\kappa-s)\\
 &=\frac{\bar{p}(s)\bar{\ell}(s)}{\bar{c}_2-\bar{c}_1}
 -\frac{\bar{q}(s)\bar{m}(s)}{\bar{c}_2-\bar{c}_1}.
\end{split}\end{equation}
 Moreover, according to $\eqref{semi4}_2$ and \eqref{2.18}, the following compatibility conditions is also be required
 \begin{equation}\label{2.28}
 \frac{d}{ds}\bar{x}(s)=\frac{d}{ds}x(s,\kappa-s)=\frac{\bar{c}_2\bar{p}(s)\bar{h}(s)
 -\bar{c}_1\bar{q}(s)\bar{g}(s)}
 {\bar{c}_2-\bar{c}_1},
 \end{equation}
  \begin{equation}\label{2.29}
 \frac{d}{ds}\bar{t}(s)=\frac{d}{ds}t(s,\kappa-s)=\frac{\bar{p}(s)\bar{h}(s)
 -\bar{q}(s)\bar{g}(s)}{\bar{c}_2-\bar{c}_1}\alpha\big(\bar{x}(s),\bar{u}(s)\big).
 \end{equation}
here, we have denoted
\begin{equation}\label{barc}
\bar{c}_1:=c_1\big(\bar{x}(s),\bar{u}(s)\big) \quad{\rm and} \quad \bar{c}_2:=c_2\big(\bar{x}(s),\bar{u}(s)\big).
\end{equation}

We take the following lemma as the starting point for our analysis.

 \begin{Lemma}\label{Lemma 2.2}
{\rm(i)} Let $(u, \ell,m, h, g, p, q)(X,Y)$ be smooth solutions of the system \eqref{semi1}--\eqref{semi4} with the boundary conditions \eqref{2.21} along the line $\gamma=\{(X,Y); ~X+Y=\kappa\}$. Assume that the compatibility condition \eqref{2.22} is satisfied.  Then, for any $(X,Y)\in\mathbb{R}^2$, it holds that \begin{equation}\label{uY}
u_Y=\frac{qm}{c_2-c_1}.
\end{equation}
if and only if
\begin{equation}\label{uX}
u_X=\frac{p\ell}{c_2-c_1},
\end{equation}

{\rm (ii)} Let $(u, \ell,m, h, g, p, q)(X,Y)$ be smooth solutions of the system \eqref{semi1}--\eqref{semi4}. Then there exists a solution $(t,x)(X,Y)$ of $\eqref{semi4}_2$--\eqref{2.18} with the boundary data \eqref{bcon} if and only if the compatibility conditions \eqref{2.28}--\eqref{2.29} are satisfied.
 \end{Lemma}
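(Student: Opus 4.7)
\medskip

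\textbf{Proof proposal.} The two parts of the lemma are instances of the same principle: a pair of overdetermined first-order equations is globally solvable when the mixed-partial (integrability) identity holds in the interior and a single compatibility identity holds on the initial curve $\gamma$. I would handle each part in this spirit.

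For part (i), suppose $u_X = p\ell/(c_2-c_1)$ holds throughout $\mathbb{R}^2$, and set
\[
\psi(X,Y) := u_Y - \frac{qm}{c_2-c_1}.
\]
Along $\gamma$, parameterized by $s \mapsto (s, \kappa-s)$, the chain rule gives $\bar u'(s) = u_X - u_Y$, and substituting the hypothesized formula for $u_X$ into \eqref{2.22} yields $\psi \equiv 0$ on $\gamma$. Next differentiate: since $u_{XY}=u_{YX}$ and $u_X = p\ell/(c_2-c_1)$, we have
\[
\psi_X = \Bigl(\frac{p\ell}{c_2-c_1}\Bigr)_Y - \Bigl(\frac{qm}{c_2-c_1}\Bigr)_X.
\]
I would now expand the right-hand side using the evolution equations \eqref{semi1}--\eqref{semi3} for $\ell_Y$, $m_X$, $p_Y$, $q_X$, together with the chain-rule expansions $(c_i)_Y = (c_i)_x x_Y + (c_i)_u u_Y$ and $(c_i)_X = (c_i)_x x_X + (c_i)_u u_X$, replacing every occurrence of $u_Y$ by $\psi + qm/(c_2-c_1)$ and $x_Y$, $x_X$ via \eqref{semi4}. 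The key algebraic observation to verify is that the coefficients $a_i$, $b$, $d_i$ defined after \eqref{R-S-eqn} are precisely those that make the ``$\psi$-free'' part of this expression vanish identically; this is the algebraic reflection of the fact that the original system \eqref{R-S-eqn} comes from a single PDE \eqref{vwl}. After cancellation one is left with a linear ODE $\psi_X = A(X,Y)\psi$ in $X$ with smooth bounded coefficient $A$, and $\psi|_\gamma = 0$, whence $\psi\equiv 0$. The converse direction swaps the roles of $X$ and $Y$ and is identical in structure.

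For part (ii), the four equations to be solved are
\[
x_X = \tfrac{c_2 ph}{c_2-c_1},\quad x_Y = \tfrac{c_1 qg}{c_2-c_1},\quad t_X = \tfrac{\alpha ph}{c_2-c_1},\quad t_Y = \tfrac{\alpha qg}{c_2-c_1}.
\]
For smooth solutions on the simply connected $\mathbb{R}^2$ with boundary data $(\bar x,\bar t)$ on $\gamma$, the Poincar\'e lemma gives existence precisely when (a) the boundary data agrees with the equations along $\gamma$, which is exactly \eqref{2.28}--\eqref{2.29} after noting $d\bar x/ds = x_X - x_Y$ and $d\bar t/ds = t_X - t_Y$ on $\gamma$, and (b) the mixed-partial identities $x_{XY}=x_{YX}$ and $t_{XY}=t_{YX}$ hold. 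Necessity of (a) and (b) is automatic. For sufficiency I would verify (b) by direct computation: differentiate the right-hand sides using \eqref{semi2}--\eqref{semi3} for $h_Y$, $g_X$, $p_Y$, $q_X$, and use the chain rule on $c_1, c_2, \alpha$ via $(x(X,Y), u(X,Y))$ with part (i) already in hand; the resulting identity between $x_{XY}$ and $x_{YX}$ (and likewise for $t$) is an algebraic consequence of the definitions of $a_i, b, d_i$ coming from \eqref{R-S-eqn}. Given (a) and (b), one constructs $x$ and $t$ by first placing them on $\gamma$ via the boundary data and then integrating along, say, $X$-curves, and the two formulas for the $Y$-derivative agree thanks to (b).

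The main obstacle is the bookkeeping inside the two integrability verifications: expanding $(p\ell/(c_2-c_1))_Y - (qm/(c_2-c_1))_X$ in part (i), and checking $x_{XY}=x_{YX}$, $t_{XY}=t_{YX}$ in part (ii), both require careful tracking of the many coefficients $a_i, b, d_i$ and of the $x$- versus $u$-derivatives of $\alpha, c_1, c_2$, since in the $(X,Y)$ chart the derivatives must be routed through $(x(X,Y), u(X,Y))$. There is no conceptual difficulty—the cancellations are forced by the variational structure of \eqref{vwl}—but the algebra is the substantive part of the argument; a slicker presentation would point out that these identities were effectively established in \cite{H} when the semi-linear system was derived from \eqref{R-S-eqn}, so that they may be invoked here.
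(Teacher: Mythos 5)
Your argument is correct and conceptually the same as the paper's: part (i) rests on the integrability identity $\big(\tfrac{qm}{c_2-c_1}\big)_X = \big(\tfrac{p\ell}{c_2-c_1}\big)_Y$ (the paper derives this by direct calculation from \eqref{semi1}--\eqref{semi3} and the chain rule; you identify it as the cancellation of the ``$\psi$-free'' part, which is exactly the same algebraic content) combined with the compatibility condition \eqref{2.22} on $\gamma$, and part (ii) is the standard Poincar\'e-lemma argument, which the paper omits by deferring to \cite{BC}. The only difference is cosmetic: you package the passage from the boundary to all of $\mathbb{R}^2$ as a linear ODE $\psi_X = A\psi$ with $\psi|_\gamma = 0$ for the defect $\psi = u_Y - \tfrac{qm}{c_2-c_1}$, where $A$ explicitly collects the residual linear-in-$\psi$ terms arising when $u_Y$ is routed through $(c_i)_Y = \partial_x c_i\,x_Y + \partial_u c_i\,u_Y$, whereas the paper integrates $u_{YX}$ directly in $Y$ and evaluates the resulting boundary term using \eqref{2.22}; both yield the same conclusion.
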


 \begin{proof}

 (i). By a direct calculation, we observe that
\begin{equation*}
\begin{split}
\big(\frac{qm}{c_2-c_1}\big)_X&=\frac{pq}{(c_2-c_1)^2}\Big\{a_1(2m\ell-g)
-(a_1+a_2)(m\ell-gh)+a_2(h-2gh)\\
&\quad-\frac{\partial_u (c_2-c_1)}{c_2-c_1}m\ell+c_1bg\ell+(d_2-\partial_x c_2)mh\Big\}\\
&=\frac{pq}{(c_2-c_1)^2}\Big\{a_1g(h-1)
+a_2h(1-g)-\frac{\partial_u [\alpha(c_2-c_1)]}{2\alpha(c_2-c_1)}m\ell+b(c_1g\ell+c_2mh)\Big\},
\end{split}
\end{equation*}
and
\begin{equation*}
\begin{split}
\big(\frac{p\ell}{c_2-c_1}\big)_Y&=\frac{pq}{(c_2-c_1)^2}\Big\{a_1(2gh-g)
+(a_1+a_2)(m\ell-gh)+a_2(h-2m\ell)\\
&\quad-\frac{\partial_u (c_2-c_1)}{c_2-c_1}m\ell+c_2bmh+(d_1-\partial_x c_1)g\ell\Big\}\\
&=\frac{pq}{(c_2-c_1)^2}\Big\{a_1g(h-1)
+a_2h(1-g)-\frac{\partial_u [\alpha(c_2-c_1)]}{2\alpha(c_2-c_1)}m\ell+b(c_1g\ell+c_2mh)\Big\},
\end{split}
\end{equation*}
which leads to
\begin{equation}\label{uXY}
\big(\frac{qm}{c_2-c_1}\big)_X=\big(\frac{p\ell}{c_2-c_1}\big)_Y.
\end{equation}
Assume that \eqref{uY} holds, it follows from \eqref{uXY} that
\begin{equation*}
u_{YX}=\big(\frac{qm}{c_2-c_1}\big)_X=\big(\frac{p\ell}{c_2-c_1}\big)_Y.
\end{equation*}
This together with the boundary condition\eqref{2.21}, compatibility condition \eqref{2.22} and the assumption \eqref{uY} gives that
\begin{equation*}
\begin{split}
u_{X}(X,Y)&=u_X(X,\kappa-X)+\int_{\kappa-X}^Y\big(\frac{p\ell}{c_2-c_1}\big)_Y(X,s)\,ds\\
&=[u_X-u_Y](X,\kappa-X)+u_Y(X,\kappa-X)+\frac{p\ell}{c_2-c_1}(X,Y)-\frac{p\ell}{c_2-c_1}(X,\kappa-X)\\
&=\frac{p\ell}{c_2-c_1}(X,Y),
\end{split}\end{equation*}
which is indeed the desired identity \eqref{uX}. Similar arguments yield the converse implication.

(ii). We omit the proof here for brevity, since a similar approach of this result can be found in \cite{BC}.
\end{proof}
\subsection{Three types of generic singularities}
We observe that, for smooth initial data, the solution of the semilinear system \eqref{semi1}--\eqref{2.18} remains smooth on the $X$-$Y$
plane. However, the solution $u(x,t)$ of system \eqref{vwl}--\eqref{con} can have singularities. This happens precisely at points where the Jacobian matrix $D\mathcal{F}$ is not invertible. In fact, the determinant of its Jacobian matrix is calculated as
\begin{equation}
det\left(
  \begin{array}{cc}
   x_X & x_Y \\
    t_X & t_Y \\
  \end{array}
\right)   =
\frac{\alpha pqgh}{c_2-c_1}.
\end{equation}
We recall that $p,q$ remain uniformly positive
and uniformly bounded on compact subsets of the $X$-$Y$ plane. Hence, at a point $(X_0, Y_0)$ where $g(X_0, Y_0)\neq 0$ and $h(X_0, Y_0)\neq 0$, this coordinate transformation is invertible, having a strictly positive determinant. We thus can conclude that the solution $u(x,t)$ of system \eqref{vwl}--\eqref{con} is smooth on a neighborhood of
the point $\big(x(X_0, Y_0),t(X_0, Y_0)\big)$. To analyze the set of points $(x,t)$ where $u$ is singular, we thus need to study in
more details of the points where $g=0$ or $h=0$.
It is natural to distinguish three generic types of singularities:
\begin{itemize}
\item[i.] Points where $h=0$ but $\ell_X\neq0$ and $g\neq 0$ (or else, where $g=0$ but $m_Y\neq0$ and $h\neq 0$), their images under the map $\mathcal{F}:(X,Y)\mapsto \big(x(X,Y),t(X,Y)\big)$ yield a family of characteristic curves in the $x$-$t$ plane where solution $u(x,t)$ is singular (Fig. \ref{sinpoints}, black curves, inner points of singular curves).

\item[ii.] Points where $h=0$ and $\ell_X=0$ but $\ell_{XX}\neq 0$ (or else, $g=0$ and $m_Y=0$ but $m_{YY}\neq 0$), their images in the $x$-$t$ plane are point where singular curves start or end: \quad (Fig. \ref{sinpoints}, red dots, initial and terminal points of singular curves).

\item[iii.] Points where $h=0$ and $g=0$, their images in the $x$-$t$ plane are points where two singular curves cross:\quad{(Fig. \ref{sinpoints}, blue dot, intersection of singular curves in two directions)}.
\end{itemize}
\bigskip
\begin{figure}[htbp]
   \centering
\includegraphics[width=0.7\textwidth]{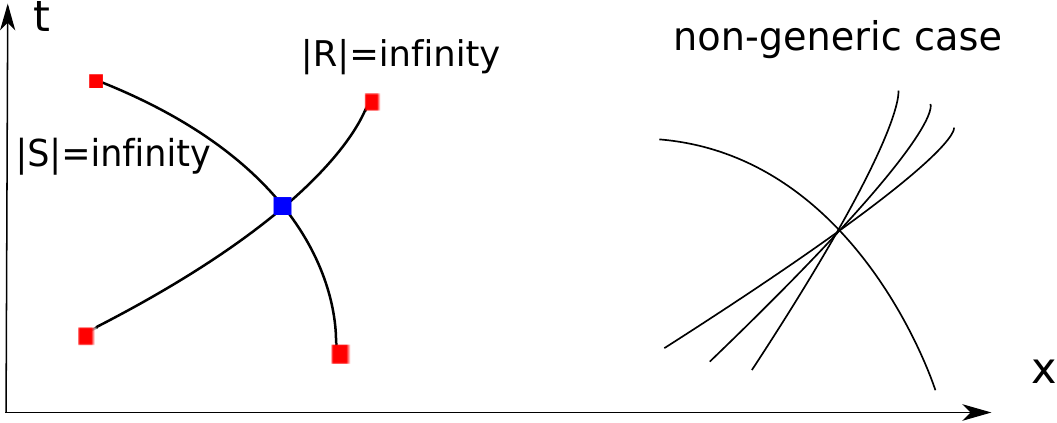}
\caption{The singular point in a solution $u(t,x)$.}
\label{sinpoints}
\end{figure}



More precisely, we give the following definition.

\begin{Definition}\label{def_gensin}
We say that a solution $u=u(x,t)$ of \eqref{vwl} has only {\bf generic singularities} for $t\in[0,T]$ if it admits a representation of the form \eqref{2.20}, where

{\rm (i)} the functions $(u, \ell, m, h, g, p, q, x,t)(X,Y)$ are $\mathcal{C}^\infty$,

{\rm (ii)} the following generic conditions
\begin{equation}\label{generic_con}
\begin{cases}
 h=0, \ell_X=0 \Longrightarrow \ell_Y\neq 0,\ell_{XX}\neq 0,\\
g=0, m_Y=0 \Longrightarrow m_X\neq 0,m_{YY}\neq 0,\\
 h=0, g=0 \Longrightarrow \ell_X\neq0,  m_Y\neq 0,\\
\end{cases}\end{equation}
hold for $t(X,Y)\in[0,T]$.
\end{Definition}

\subsection{Families of perturbed solutions}
Now we construct families of smooth solutions to the semi-linear system of \eqref{semi1}--\eqref{semi4}, depending on parameters.
Let a point $(X_0,Y_0)$ be given, and consider the line
\begin{equation*}\label{3.1}
\gamma_\kappa=\{(X,Y);~X+Y=\kappa\}, \quad \kappa~\:=~X_0+Y_0.
\end{equation*}
The main result of this part reads
\begin{Lemma}\label{Lemma 3.1}
 Assume the generic condition \eqref{gencon} holds. Let a point $(X_0,Y_0)\in \mathbb{R}^2$ be given, and $(u, \ell, m, h, g, p, q,x)$ be a smooth solution of the semi-linear system \eqref{semi1}--\eqref{semi4}.

{\bf(1)} If $(h, \ell_X,\ell_{XX})(X_0,Y_0)=(0, 0, 0)$, then there exists a 3-parameter family of smooth solutions $(u^\vartheta,\ell^\vartheta,  m^\vartheta, h^\vartheta, g^\vartheta, p^\vartheta, q^\vartheta,x^\vartheta)$ of \eqref{semi1}--\eqref{semi4}, depending smoothly on $\vartheta\in \mathbb{R}^{3}$, such that the following holds.

{\rm (i)} When $\vartheta= 0\in\mathbb R^{3}$, one recovers the original solution, namely $(u^0,\ell^0, m^0, h^0, g^0, p^0, q^0,x^0)$ $=( u, \ell, m, h, g, p, q,x)$.

{\rm(ii)} At a point $(X_0,Y_0)$, when $\vartheta=0$ one has
\begin{equation}\label{3.2}
\text{rank }D_\vartheta(h^\vartheta, \ell_X^\vartheta, \ell^\vartheta_{XX})~=~3.
\end{equation}

{\bf(2)} If $(h, g, \ell_X)(X_0,Y_0)=(0,0, 0)$, then there exists a 3-parameter family of smooth solutions $(u^\vartheta,\ell^\vartheta,  m^\vartheta, h^\vartheta, g^\vartheta, p^\vartheta, q^\vartheta,x^\vartheta)$ satisfying {\rm(i)--(ii)} as above, with \eqref{3.2} replaced by
\begin{equation}\label{3.3}
\text{rank }D_\vartheta(h^\vartheta, g^\vartheta,\ell^\vartheta_X)~=~3.
\end{equation}

{\bf(3)} If $\big(h, \ell_X,\partial_u\lambda_-(x,u)\big)(X_0,Y_0)=(0,0, 0)$, then there exists a 3-parameter family of smooth solutions $(u^\vartheta,\ell^\vartheta,  m^\vartheta, h^\vartheta, g^\vartheta, p^\vartheta, q^\vartheta,x^\vartheta)$ satisfying {\rm(i)--(ii)} as above, with \eqref{3.2} replaced by
\begin{equation}\label{3.4}
\text{rank }D_\vartheta\big(h^\vartheta,\ell^\vartheta_X,\partial_u\lambda_-(x^\vartheta,u^\vartheta)\big)~=~3.
\end{equation}
\end{Lemma}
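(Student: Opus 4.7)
The overall plan is to construct the three-parameter families by perturbing the boundary data of the semilinear system \eqref{semi1}--\eqref{2.18} along the line $\gamma_\kappa = \{X+Y = \kappa\}$ with $\kappa = X_0 + Y_0$, and then appealing to well-posedness of the associated semilinear Cauchy problem to obtain smooth solutions on $\mathbb{R}^2$ depending smoothly on the parameter $\vartheta \in \mathbb{R}^3$. By Lemma \ref{Lemma 2.2} the free boundary data along $\gamma_\kappa$ parametrized by $s \mapsto (s, \kappa-s)$ are $(\bar\ell, \bar m, \bar h, \bar g, \bar p, \bar q)(s)$, while $(\bar u, \bar x, \bar t)(s)$ are recovered (up to initial constants of integration, which we keep fixed) by integrating the compatibility conditions \eqref{2.22}, \eqref{2.28}, \eqref{2.29}. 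The perturbations will be localized near $s = X_0$ and will reduce to the original data at $\vartheta = 0$.

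For case \textbf{(1)}, fix an even bump $\phi \in C_c^\infty(\mathbb{R})$ with $\phi(0) = 1$ and set
\begin{equation*}
\bar h^\vartheta(s) = \bar h(s) + \vartheta_1 \phi(s - X_0), \quad
\bar\ell^\vartheta(s) = \bar\ell(s) + \vartheta_2 (s - X_0)\phi(s - X_0) + \tfrac{1}{2}\vartheta_3 (s - X_0)^2\phi(s - X_0),
\end{equation*}
keeping $\bar m, \bar g, \bar p, \bar q$ unchanged. Since $(X_0, Y_0) \in \gamma_\kappa$, the pointwise values of $\bar h^\vartheta, \bar\ell^\vartheta$ and their $s$-derivatives at $s = X_0$ directly yield $h^\vartheta, \ell^\vartheta$ and, via the identities $\ell_X = \tfrac{d\bar\ell}{ds} + \ell_Y$ and $\ell_{XX} = \tfrac{d^2\bar\ell}{ds^2} + 2\ell_{XY} - \ell_{YY}$ together with $\ell_Y, \ell_{XY}, \ell_{YY}$ read off from \eqref{semi1}--\eqref{semi4}, also the values of $\ell_X^\vartheta, \ell_{XX}^\vartheta$ at $(X_0, Y_0)$. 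With the chosen $\phi$, at $\vartheta = 0$,
\begin{equation*}
\partial_{\vartheta_1} \bar h^\vartheta(X_0) = 1, \qquad
\partial_{\vartheta_2} \tfrac{d\bar\ell^\vartheta}{ds}(X_0) = 1, \qquad
\partial_{\vartheta_3} \tfrac{d^2\bar\ell^\vartheta}{ds^2}(X_0) = 1,
\end{equation*}
while $\vartheta_2, \vartheta_3$ leave $\bar\ell(X_0)$ invariant and $\vartheta_3$ additionally leaves $\tfrac{d\bar\ell}{ds}(X_0)$ invariant. Arranging the targets $(h, \ell_X, \ell_{XX})$ against parameters $(\vartheta_1, \vartheta_2, \vartheta_3)$, the Jacobian $D_\vartheta(h^\vartheta, \ell_X^\vartheta, \ell_{XX}^\vartheta)|_{\vartheta=0}$ is lower-triangular with nonzero diagonal entries, establishing \eqref{3.2}.

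Cases \textbf{(2)} and \textbf{(3)} proceed in the same spirit, with the third parameter adapted to the new target. For \textbf{(2)}, replace the $\vartheta_3$ term in $\bar\ell^\vartheta$ by $\bar g^\vartheta(s) = \bar g(s) + \vartheta_3 \phi(s - X_0)$; then $\partial_{\vartheta_3} g^\vartheta(X_0, Y_0) = 1$ while $\vartheta_3$ leaves $\bar h(X_0)$ and $\tfrac{d\bar\ell}{ds}(X_0)$ invariant, yielding \eqref{3.3}. For \textbf{(3)}, choose $\vartheta_3$ to shift $\bar u(X_0)$ (or $\bar x(X_0)$) and hence $\partial_u\lambda_-(x^\vartheta, u^\vartheta)(X_0, Y_0)$: by the generic condition \eqref{gencon}, at $(x_0, u_0) = (\bar x(X_0), \bar u(X_0))$ either $\partial_{uu}\lambda_- \ne 0$, in which case a shift of $\bar u(X_0)$ suffices, or $\partial_{ux}\lambda_- \ne 0$, in which case one shifts $\bar x(X_0)$. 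Either shift is realized by adding a compactly supported bump to $\bar\ell$ (respectively to $\bar h$) on an interval $[a,b]$ strictly to the left of $X_0$, which through \eqref{2.22} (respectively \eqref{2.28}) changes $\bar u(X_0)$ (respectively $\bar x(X_0)$) by a nonzero integrated amount while leaving $\bar h(X_0), \bar\ell(X_0)$ and $\tfrac{d\bar\ell}{ds}(X_0)$ unchanged; combined with the $\vartheta_1, \vartheta_2$ perturbations from case \textbf{(1)}, this yields \eqref{3.4}.

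The main obstacle I anticipate is the verification of nondegeneracy of each Jacobian, because each $\vartheta_j$ also influences the non-targeted quantities through the coupling in \eqref{semi1}--\eqref{semi4} and through the compatibility conditions, which can propagate perturbations of $(\bar h, \bar\ell, \bar g, \ldots)$ into $(\bar u, \bar x, \bar t)$ at $s = X_0$. The triangular structure hinges on a careful choice of the support and higher-order vanishing of each bump function so that the lower-order targets remain invariant under the higher-order parameters. These calculations parallel the construction carried out for the variational wave equation \eqref{vwe} in \cite{BC}, with the additional variable-coefficient terms $a_i, b, d_i$ in \eqref{semi1}--\eqref{semi3} introducing notational but not conceptual complications.
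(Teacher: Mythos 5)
Your approach for parts \textbf{(1)} and \textbf{(2)} matches the paper's: perturb the boundary data along $\gamma_\kappa$ with compactly supported profiles centered at $s=X_0$, use the identities $\ell_X=\bar\ell'+\ell_Y$ and $\ell_{XX}=\bar\ell''+2\ell_{XY}-\ell_{YY}$ together with the semilinear equations to transfer the perturbation to $(X_0,Y_0)$, and observe that the resulting $3\times 3$ Jacobian is triangular with nonvanishing diagonal. This is sound, and your explicit polynomial-times-bump profiles are a clean realization of the paper's more abstract perturbations $\bar\ell^\vartheta=\bar\ell+\sum_j\vartheta_jL_j$, etc.

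Case \textbf{(3)} has a genuine gap. You try to shift $\bar u(X_0)$ (or $\bar x(X_0)$) indirectly by placing a bump in $\bar\ell$ (or $\bar h$) on an interval $[a,b]$ strictly to the left of $X_0$ and integrating the compatibility ODEs \eqref{2.22}--\eqref{2.28}. This runs into a dilemma over where those ODEs are anchored. If the integration constants are fixed at $s=X_0$ (which is what your parenthetical \emph{``up to initial constants of integration, which we keep fixed''} most naturally suggests, and what makes cases (1)--(2) transparent), then a bump supported to the left of $X_0$ leaves $\bar u(X_0)$ and $\bar x(X_0)$ untouched, so $\partial_{\vartheta_3}\bigl[\partial_u\lambda_-(x^\vartheta,u^\vartheta)\bigr](X_0,Y_0)=0$ and the rank drops to $2$. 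If instead you anchor to the left of $[a,b]$, then the bump in $\bar\ell$ changes \emph{both} $\bar u(X_0)$ and $\bar x(X_0)$ — \eqref{2.22} and \eqref{2.28} are a coupled system because the coefficients $\bar c_1,\bar c_2$ depend on $(\bar x,\bar u)$ — and the quantity you need to be nonzero, namely $\partial_{uu}\lambda_-\cdot\partial_{\vartheta_3}\bar u(X_0)+\partial_{ux}\lambda_-\cdot\partial_{\vartheta_3}\bar x(X_0)$, could in principle cancel; your claim \emph{``a shift of $\bar u(X_0)$ suffices''} silently assumes $\bar x(X_0)$ is held fixed. The paper sidesteps this by recognizing that $\bar u(X_0)$ and $\bar x(X_0)$ are themselves free integration constants for the compatibility ODEs: it parametrizes them directly as $\bar u^\vartheta(X_0)=\bar u(X_0)+\sum_j\vartheta_jU_j(X_0)$, $\bar x^\vartheta(X_0)=\bar x(X_0)+\sum_j\vartheta_j\mathcal X_j(X_0)$, then integrates \eqref{2.22}--\eqref{2.28} outward from $X_0$. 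In case (3) it then chooses $\bigl(\partial_{\vartheta_2}\bar u,\partial_{\vartheta_2}\bar x\bigr)(X_0)=(1,0)$ when $\partial_{uu}\lambda_-\neq 0$, or $(0,1)$ when $\partial_{ux}\lambda_-\neq 0$, which immediately makes the $\partial_u\lambda_-$ entry equal to the nonzero second derivative. You should adopt this direct parametrization of $(\bar u,\bar x)(X_0)$ — it also simplifies (1) and (2), where setting $U_j(X_0)=\mathcal X_j(X_0)=0$ guarantees cleanly that the $\bar u,\bar x$ rows of the boundary-data Jacobian vanish.
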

\begin{Remark}
Three groups of functions considered in \eqref{3.2}-\eqref{3.4} are corresponding to conditions on three types of generic singularities in Definition \ref{def_gensin}.
\end{Remark}
\begin{proof}[\bf Proof]
Let $(u, \ell, m, h, g, p, q,x)$ be a smooth solution to the semi-linear system \eqref{semi1}--\eqref{semi4}, and $(\bar{u},\bar{\ell},\bar{ m},\bar{h},\bar{g},\bar{p},\bar{q},\bar{x})(s)$ be the values along a line $\gamma_\kappa$ as in \eqref{2.21}.
The main goal of this lemma is to consider families solution $(\bar{u}^\vartheta,\bar{\ell}^\vartheta,\bar{ m}^\vartheta,\bar{h}^\vartheta,\bar{g}^\vartheta,\bar{p}^\vartheta,
\bar{q}^\vartheta,\bar{x}^\vartheta)$ of \eqref{semi1}--\eqref{semi4} with perturbations on the data \eqref{2.21} along the curve $\gamma_\kappa$, so that the matrices in \eqref{3.2}--\eqref{3.4}, computed at $\vartheta=0$, have full rank at the given point $(X_0,Y_0)$. These perturbations will have the form
\begin{equation*}
\left\{
\begin{array}{ll}
\displaystyle \bar{\ell}^\vartheta(s)=\bar{\ell}(s)+\sum_{j=1}^3\vartheta_j L_{j}(s),\\
\displaystyle \bar{m}^\vartheta(s)=\bar{m}(s)+\sum_{j=1}^3\vartheta_j M_{j}(s),\\
\displaystyle \bar{h}^\vartheta(s)=\bar{h}(s)+\sum_{j=1}^3\vartheta_j H_{j}(s),
\end{array}
\right.
\quad
\left\{
\begin{array}{ll}
\displaystyle \bar{g}^\vartheta(s)=\bar{g}(s)+\sum_{j=1}^3\vartheta_j G_{\j}(s),\\
\displaystyle \bar{p}^\vartheta(s)=\bar{p}(s)+\sum_{j=1}^3\vartheta_j P_{j}(s),\\
\displaystyle \bar{q}^\vartheta(s)=\bar{q}(s)+\sum_{j=1}^3\vartheta_j Q_{j}(s),\\
\end{array}
\right.
\end{equation*}
for some suitable functions $L_{j},M_{j}, H_{j}, G_{j}, P_j, Q_j\in \mathcal{C}^\infty_c(\mathbb{R})$. Moreover, at point $s=X_0$, we set
\begin{equation*}
\displaystyle \bar{u}^\vartheta(X_0)=\bar{u}(X_0)+\sum_{j=1}^3\vartheta_j U_{j}(X_0),\quad \bar{x}^\vartheta(X_0)=\bar{x}(X_0)+\sum_{j=1}^3\vartheta_j \mathcal{X}_{j}(X_0).
\end{equation*}
Notice that, with the above definitions and the compatibility conditions \eqref{2.22} and \eqref{2.28},  we can obtain the values $\bar{u}^\vartheta(s)$ and $\bar{x}^\vartheta(s)$ for all $s\in \mathbb{R}$. In addition, we can derive a unique solution of the semi-linear system \eqref{semi1}--\eqref{semi4} for each $\vartheta\in\mathbb{R}^3$.

To prove our results, we proceed with the values of $\ell_X$ and $\ell_{XX}$ at the point $(X_0,Y_0)$. To this end, we first observe that,
\begin{equation*}
\bar{z}'(s)=\frac{d}{ds}z(s,\kappa-s)=(z_X -z_Y)(s,\kappa-s),
\end{equation*}
 at any point $(s,\kappa-s)\in\gamma_\kappa$, for $z=\ell,m,h,g,q$. Here and in the rest of this manuscript, unless specified, we will use a prime to denote the derivative with respect to the parameter $s$ along the line $\gamma_\kappa$. Hence, it follows from \eqref{semi1}--\eqref{semi3} that,
\begin{equation}\label{lmeqn}
\ell_X(X_0,Y_0)
=\bar{\ell}'+\frac{\bar{q}(2\bar{h}-1)}{\bar{c}_2-\bar{c}_1}f_1,\quad
m_Y(X_0,Y_0)
=-\bar{m}'+\frac{\bar{p}(2\bar{g}-1)}{\bar{c}_2-\bar{c}_1}f_2,
\end{equation}
\begin{equation}\label{hgeqn}
h_X(X_0,Y_0)
=\bar{h}'-\frac{2\bar{q}\bar{\ell}}{\bar{c}_2-\bar{c}_1}f_1,\qquad
g_Y(X_0,Y_0)
=-\bar{g}'-\frac{2\bar{p}\bar{m}}{\bar{c}_2-\bar{c}_1}f_2,\quad\quad
\end{equation}
\begin{equation}\label{qyeqn}
q_Y(X_0,Y_0)
=-\bar{q}'+\frac{2\bar{p}\bar{q}}{\bar{c}_2-\bar{c}_1}f_3,
\quad\quad\quad\quad\quad\quad\quad\quad\qquad\qquad\qquad\qquad
\end{equation}
where the right hand sides of \eqref{lmeqn}--\eqref{qyeqn} are evaluated at $s=X_0$ and we have denoted
\begin{equation*}
f_1~:=~\bar{a}_1\bar{g}+\bar{a}_2\bar{h}-(\bar{a}_1+\bar{a}_2)(\bar{g} \bar{h}+\bar{m}\bar{\ell})+\bar{c}_2 \bar{b}\bar{h}\bar{m}-\bar{d}_1\bar{g}\bar{\ell},\quad\quad\quad\quad\quad\quad\quad\quad\quad
\end{equation*}
\begin{equation*}
f_2~:=~-\bar{a}_1\bar{g}-\bar{a}_2\bar{h}+(\bar{a}_1+\bar{a}_2)(\bar{g} \bar{h}+\bar{m}\bar{\ell})+\bar{c}_1 \bar{b}\bar{g}\bar{\ell}-\bar{d}_2\bar{h}\bar{m},\quad\quad\quad\quad\quad\quad\quad\quad
\end{equation*}
\begin{equation*}
f_3~:=~\bar{a}_1(\bar{\ell}-\bar{m})+(\bar{a}_1+\bar{a}_2)(\bar{h}\bar{m}
-\bar{g}\bar{\ell})+\bar{c}_1
\bar{b}\bar{m}\bar{\ell}+\bar{d}_2\bar{g}\bar{h}+\frac{\bar{c}_1
\partial_x \bar{c}_2-\bar{c}_2\partial_x \bar{c}_2}{2(\bar{c}_2-\bar{c}_1)}\bar{h},
\end{equation*}
with $\bar{a}_i=a_i\big(\bar{x}(s),\bar{u}(s)\big),$ $\bar{b}=b\big(\bar{x}(s),\bar{u}(s)\big)$,
$\bar{d}_i=d_i\big(\bar{x}(s),\bar{u}(s)\big)$ and $\bar{c}_i$ denoted in \eqref{barc}, for $i=1,2$.

On the other hand, a straightforward computation now shows
\begin{equation}\label{3.6}
\frac{d^2}{ds^2}\bar{\ell}(s)=\frac{d}{ds}[\ell_X (s,\kappa-s)-\ell_Y (s,\kappa-s)]=(\ell_{XX}+\ell_{YY}-2\ell_{XY})(s,\kappa-s).
\end{equation}
 By \eqref{semi1}--\eqref{semi4} and \eqref{hgeqn}--\eqref{qyeqn}, further manipulation leads to the following estimates for $\ell_{YY}$ and $\ell_{YX}$:
\begin{equation}\label{3.7}
\begin{split}
\ell_{YY}(X_0,Y_0)=&-\frac{\bar{q}(2\bar{h}-1)}{(\bar{c}_2-\bar{c}_1)^2}
\Big[\partial_u(\bar{c}_2-\bar{c}_1)\bar{u}_Y+
\partial_x(\bar{c}_2-\bar{c}_1)\bar{x}_Y\Big]f_1\\
&+\frac{(2\bar{h}-1)f_1}{\bar{c}_2-\bar{c}_1}\bar{q}_Y
+\frac{2\bar{q} f_1}{\bar{c}_2-\bar{c}_1}\bar{h}_Y+\frac{\bar{q}(2\bar{h}-1)}{\bar{c}_2-\bar{c}_1}\partial_Y f_1\\
=&\frac{\bar{q}(2\bar{h}-1)}{(\bar{c}_2-\bar{c}_1)^2}
\big[\frac{\partial_u\bar{c}_1-\partial_u\bar{c}_2}{\bar{c}_2-\bar{c}_1}\bar{q}\bar{m}
+\frac{\partial_x\bar{c}_1-\partial_x\bar{c}_2}{\bar{c}_2-\bar{c}_1}\bar{c}_1\bar{q}\bar{m}\big]f_1\\
&+\frac{(2\bar{h}-1)f_1}{\bar{c}_2-\bar{c}_1}[-\bar{q}'+\frac{2\bar{p}\bar{q}}{\bar{c}_2-\bar{c}_1}f_3]
-\frac{4\bar{q}^2\bar{\ell} f_1^2}{(\bar{c}_2-\bar{c}_1)^2}+\frac{\bar{q}(2\bar{h}-1)}{\bar{c}_2-\bar{c}_1}\partial_Y f_1=:F_1,
\end{split}
\end{equation}
and
\begin{equation}\label{3.8}
\begin{split}
\ell_{YX}(X_0,Y_0)=&-\frac{\bar{q}(2\bar{h}-1)}{(\bar{c}_2-\bar{c}_1)^2}
\Big[\partial_u(\bar{c}_2-\bar{c}_1)\bar{u}_X+
\partial_x(\bar{c}_2-\bar{c}_1)\bar{x}_X\Big]f_1\\
&+\frac{(2\bar{h}-1)f_1}{\bar{c}_2-\bar{c}_1}\bar{q}_X
+\frac{2\bar{q} f_1}{\bar{c}_2-\bar{c}_1}\bar{h}_X+\frac{\bar{q}(2\bar{h}-1)}{\bar{c}_2-\bar{c}_1}\partial_X f_1\\
=&\frac{\bar{q}(2\bar{h}-1)}{(\bar{c}_2-\bar{c}_1)^2}
\big[\frac{\partial_u\bar{c}_1-\partial_u\bar{c}_2}{\bar{c}_2-\bar{c}_1}\bar{p}\bar{\ell}
+\frac{\partial_x\bar{c}_1-\partial_x\bar{c}_2}{\bar{c}_2-\bar{c}_1}\bar{c}_2\bar{p}\bar{h}\big]f_1\\
&+\frac{2(2\bar{h}-1)\bar{p}\bar{q}f_1f_3}{(\bar{c}_2-\bar{c}_1)^2}
+\frac{2\bar{q} f_1}{\bar{c}_2-\bar{c}_1}[\bar{h}'-\frac{2\bar{q}\bar{\ell}}{\bar{c}_2-\bar{c}_1}f_1]+\frac{\bar{q}(2\bar{h}-1)}{\bar{c}_2-\bar{c}_1}\partial_X f_1=:F_2,
\end{split}
\end{equation}
with
$
\partial_r f_1=\partial_r\big[\bar{a}_1\bar{g}+\bar{a}_2\bar{h}-(\bar{a}_1+\bar{a}_2)(\bar{g} \bar{h}+\bar{m}\bar{\ell})+\bar{c}_2 \bar{b}\bar{h}\bar{m}-\bar{d}_1\bar{g}\bar{\ell}\big]
$, for $r=X$ or $Y$.
The equations \eqref{3.6}--\eqref{3.8} in turn yield
\begin{equation}\label{3.9}
\ell_{XX}=\bar{\ell}''-F_{1}+2F_{2}.
\end{equation}

Now, we are ready to construct families of perturbed solutions satisfying \eqref{3.2}--\eqref{3.4}.

{\bf (1).} We choose suitable perturbations $(\bar{u}^\vartheta,\bar{\ell }^\vartheta,\bar{ m}^\vartheta,\bar{h}^\vartheta,\bar{g}^\vartheta,\bar{p}^\vartheta,\bar{q}^\vartheta,\bar{x}^\vartheta)$, such that, at the point $s=X_0$ and $\vartheta= 0$, it holds that
\begin{equation*}
 D_\vartheta\left(
\begin{array}{c}
   \bar{u}\\
      \bar{h}\\
 \bar{g}\\
    \bar{\ell}\\
    \bar{m}\\
      \bar{p}\\
 \bar{q}\\
 \bar{x}\\
 \bar{g}'\\
 \bar{h}'\\
    \bar{m}'\\
      \bar{\ell}'\\
 \bar{\ell}''\\
 \bar{q}'\\
  \end{array}
\right) =\left(
\begin{array}{ccc}
  0&0&0\\
  1&0&0\\
 0&0&0\\
 0&0&0\\
   0&0&0\\
 0&0&0\\
 0&0&0\\
0&0&0\\
 0&0&0\\
0&0&0\\
 0&0&0\\
 0&1&0\\
  0&0&1\\
    0&0&0\\
  \end{array}
\right) .
\end{equation*}
Hence, by using \eqref{lmeqn} and \eqref{3.9}, we obtain the desired Jacobian matrix at the point $(X_0,Y_0)$,
 \begin{equation*}
 D_\vartheta\left(
\begin{array}{c}
h\\
     \ell_X\\
        \ell_{XX}\\
  \end{array}
\right) =\left( \begin{array}{ccccccc}
   1&0&0\\
  *&1&0\\
  *&*&1\\
  \end{array}
\right).
\end{equation*}
This in turn yields \eqref{3.2}.

{\bf (2).} We choose suitable perturbations $(\bar{u}^\vartheta,\bar{\ell}^\vartheta,\bar{ m}^\vartheta,\bar{h}^\vartheta,\bar{g}^\vartheta,\bar{p}^\vartheta,\bar{q}^\vartheta,\bar{x}^\vartheta)$, such that, at the point $s=X_0$ and $\vartheta= 0$, the Jacobian matrix of first order derivatives with respect to  $\vartheta$ is given by
 \begin{equation*}
 D_\vartheta\left(
\begin{array}{c}
   \bar{u}\\
 \bar{h}\\
    \bar{g}\\
      \bar{\ell}\\
 \bar{m}\\
    \bar{p}\\
      \bar{q}\\
      \bar{x}\\
      \bar{\ell}'\\
  \end{array}
\right) =\left(
\begin{array}{ccc}
  0& 0&0\\
  1& 0&0\\
  0& 1&0\\
  0& 0&0\\
  0&0&0\\
  0& 0&0\\
  0& 0&0\\
  0& 0&0\\
  0& 0&1\\
  \end{array}
\right) .
\end{equation*}
This together with \eqref{lmeqn} implies that at the point $(X_0,Y_0)$,
 \begin{equation*}
 D_\vartheta\left(
\begin{array}{c}
h\\
g\\
    \ell_X \\
  \end{array}
\right) =\left( \begin{array}{ccc}
   1&0&0\\
  0&1&0\\
  *&*&1\\
  \end{array}
\right).
\end{equation*}
We thus conclude this matrix has full rank, that is, \eqref{3.3} holds.

{\bf (3).} If $(h,\partial_u\lambda_-, \ell_X)(X_0,Y_0)=(0,0,0)$ is satisfied, the generic condition \eqref{gencon} gives that
\begin{equation}\label{3.10}
\partial_{uu}\lambda_-(X_0,Y_0)\neq0 \quad{\rm or}\quad \partial_{ux}\lambda_-(X_0,Y_0)\neq0.
\end{equation}
By choosing suitable perturbations $(\bar{u}^\vartheta,\bar{ \ell}^\vartheta,\bar{ m}^\vartheta,\bar{h}^\vartheta,\bar{g}^\vartheta,\bar{p}^\vartheta,
 \bar{q}^\vartheta,\bar{x}^\vartheta)$, such that, at the point $s=X_0$ and $\vartheta=0$, it holds that
 \begin{equation*}
 D_\vartheta\left(
\begin{array}{c}
   \bar{u}\\
 \bar{h}\\
    \bar{g}\\
      \bar{\ell}\\
 \bar{m}\\
    \bar{p}\\
      \bar{q}\\
      \bar{x}\\
      \bar{\ell}'\\
  \end{array}
\right) =\left(
\begin{array}{ccc}
   0& 1&0\\
  1& 0&0\\
  0& 0&0\\
  0& 0&0\\
  0& 0&0\\
  0&0&0\\
  0& 0&0\\
  0& 0&0\\
  0& 0&1\\
  \end{array}
\right)
\quad {\rm or}\quad
 D_\vartheta\left(
\begin{array}{c}
   \bar{u}\\
 \bar{h}\\
    \bar{g}\\
      \bar{\ell}\\
 \bar{m}\\
    \bar{p}\\
      \bar{q}\\
      \bar{x}\\
      \bar{\ell}'\\
  \end{array}
\right) =\left(
\begin{array}{ccc}
   0& 0&0\\
  1& 0&0\\
  0& 0&0\\
  0& 0&0\\
  0& 0&0\\
  0&0&0\\
  0& 0&0\\
  0& 1&0\\
  0& 0&1\\
  \end{array}
\right) .
\end{equation*}
Here the first matrix corresponds to the assumption that $\partial_{uu}\lambda_-(X_0,Y_0)\neq0$,
while the second one corresponds to $\partial_{ux}\lambda_-(X_0,Y_0)\neq0$.
In terms of this construction and \eqref{lmeqn}, one has
 \begin{equation*}
 D_\vartheta\left(
\begin{array}{c}
h\\
  \partial_u \lambda_-\\
    \ell_X\\
  \end{array}
\right) =\left( \begin{array}{ccccc}
 1&0&0\\
  *&\partial_{uu}\lambda_-&0\\
   *&*&1\\
  \end{array}
\right)
\quad {\rm or}\quad
 D_\vartheta\left(
\begin{array}{c}
h\\
  \partial_u \lambda_-\\
    \ell_X\\
  \end{array}
\right) =\left( \begin{array}{ccccc}
 1&0&0\\
  *&\partial_{ux}\lambda_-&0\\
   *&*&1\\
  \end{array}
\right),
\end{equation*}
at the point $(X_0,Y_0)$, which, in combination with \eqref{3.10} achieves \eqref{3.4}. Here the first matrix corresponds to the assumption that $\partial_{uu}\lambda_-(X_0,Y_0)\neq0$,
while the second one corresponds to $\partial_{ux}\lambda_-(X_0,Y_0)\neq0$.
This completes the proof  of Lemma \ref{Lemma 3.1}.
\end{proof}

\subsection{Proof of Theorem \ref{thm_reg}}

To recover the singularities of the solution $u=u(x,t)$ of \eqref{vwl} in the original $(x,t)$ plane, we will use Lemma \ref{Lemma 3.1} together with transversality argument (c.f.\cite{Bloom, BC, GG}) to study the smooth solutions to the semi-linear \eqref{semi1}--\eqref{2.18}, and hence determine the generic structure of the level sets $\{(X,Y);~ h(X,Y)=0\}$ and $\{(X,Y); ~g(X,Y)=0\}$.
One can prove the following lemma in a very similar method as in \cite{BC}, we omit it here for brevity.

\begin{Lemma}\label{Lemma 4.1}
Assume the generic condition \eqref{gencon} holds. Consider a compact domain of the form $$\Omega:=~\{(X,Y); ~|X|\leq M,\quad |Y|\leq M\},$$
and denote $\mathcal{S}$ be the family of all $\mathcal{C}^2$ solutions $(u, \ell, m, h, g, p, q,x)$ to the semi-linear system \eqref{semi1}--\eqref{semi4}, with $p,q>0$ for all $(X,Y)\in\mathbb{R}^2$. Moreover, denote $\mathcal{S}'\subset \mathcal{S}$ be the subfamily of all solutions $(u, \ell, m, h, g, p, q,x)$, such that for $(X ,Y)\in\Omega$, none of the following values is attained:
\begin{equation}\label{4.1}
\left\{
\begin{array}{ll}
(h, \ell_X,\ell_{XX})=(0, 0,0),\\
(g, m_Y, m_{YY})=(0, 0, 0),
\end{array}
\right.\quad
\left\{
\begin{array}{ll}
(h,g, \ell_X)=(0,0, 0),\\
(h,g,  m_Y)=(0,0, 0),
\end{array}
\right.\quad
\left\{
\begin{array}{ll}
(h,\partial_{u}\lambda_-,\ell_X)=(0,0,0),\\
(g, \partial_{u}\lambda_+,m_Y)=(0,0,0).
\end{array}
\right.\\
\end{equation}
Then $\mathcal{S}'$ is a relatively open and dense subset of  $\mathcal{S}$, in the topology induced by $\mathcal{C}^2(\Omega)$.
\end{Lemma}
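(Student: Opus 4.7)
The plan is to prove openness and density separately. Openness is the easier part: suppose $(u^n,\ell^n,\ldots)\to(u,\ell,\ldots)$ in $\mathcal{C}^2(\Omega)$ with $u^n\notin\mathcal{S}'$ for all $n$. Then for each $n$ there is a point $(X_n,Y_n)\in\Omega$ where one of the six degenerate triples in \eqref{4.1} vanishes. Passing to a subsequence, $(X_n,Y_n)\to(X_*,Y_*)\in\Omega$ by compactness, and further subsequencing we may assume the \emph{same} triple vanishes for infinitely many $n$. All quantities involved in those triples, including the second derivatives $\ell_{XX}$ and $m_{YY}$, depend continuously in the $\mathcal{C}^2(\Omega)$ topology, so the limit satisfies the same vanishing at $(X_*,Y_*)$ and lies in $\mathcal{S}\setminus\mathcal{S}'$. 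Hence $\mathcal{S}'$ is open.

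\textbf{Density via Thom's transversality.} I would write $\mathcal{S}\setminus\mathcal{S}'=\bigcup_{k=1}^{6}B_k$, where $B_k$ denotes the set of $u\in\mathcal{S}$ at which the $k$-th triple from \eqref{4.1} vanishes somewhere in $\Omega$, and show each $B_k$ is closed and nowhere dense. Fix $k$, pick $u_0\in\mathcal{S}$, and pick a potential bad base point $(X_0,Y_0)\in\Omega$. By Lemma \ref{Lemma 3.1} (for the three $X$-type conditions) or its symmetric $Y$-counterpart obtained by exchanging the roles of $\ell$ and $m$, $h$ and $g$, $p$ and $q$, there is a $\mathcal{C}^\infty$ three-parameter family of solutions $(u^\vartheta,\ldots)_{\vartheta\in\mathbb{R}^3}$ with $u^0=u_0$ such that the map
\[
\Phi:(\vartheta,X,Y)\ \longmapsto\ \big(k\text{-th triple evaluated on }u^\vartheta\big)(X,Y)
\]
has $\mathrm{rank}\,D_\vartheta\Phi(0,X_0,Y_0)=3$. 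By continuity this full-rank condition persists on a product neighborhood $U\times V$ of $(0,X_0,Y_0)$, on which $\Phi$ is a submersion onto $\mathbb{R}^3$. Thom's parametric transversality theorem then yields an open dense set of $\vartheta\in U$ for which the slice $\Phi(\vartheta,\cdot):V\to\mathbb{R}^3$ is transverse to $\{0\}$; since $\dim V=2$ is strictly less than the codimension $3$ of $\{0\}$ in $\mathbb{R}^3$, transversality forces this slice to be nowhere zero on $V$.

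\textbf{Globalization and main obstacle.} The principal obstacle is to promote this local statement (no bad point in a small $V$) to a global one (no bad point anywhere in $\Omega$), since Lemma \ref{Lemma 3.1} produces a family attached to a single base point. I would handle this by compactness: for fixed $k$, cover $\Omega$ by finitely many open sets $V_1,\ldots,V_N$ of the above type, and let $B_k^{V_j}$ be the subset of $u\in\mathcal{S}$ whose $k$-th triple vanishes somewhere in $V_j$. Each $B_k^{V_j}$ is closed in $\mathcal{S}$ by the continuity-plus-compactness argument used for openness, and nowhere dense by the local Thom step above (applied with $V\subset V_j$). Hence $B_k\subset\bigcup_{j=1}^{N} B_k^{V_j}$ is a finite union of closed nowhere dense sets, and therefore itself closed and nowhere dense, so its complement $\mathcal{S}\setminus B_k$ is open and dense. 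Finally,
\[
\mathcal{S}' \ =\ \bigcap_{k=1}^{6}\big(\mathcal{S}\setminus B_k\big)
\]
is a finite intersection of open dense subsets of $\mathcal{S}$, hence open and dense, which completes the proof.
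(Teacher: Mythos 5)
The overall strategy you propose — openness by compactness and continuity, density by Thom's parametric transversality fed by the three-parameter families of Lemma~\ref{Lemma 3.1} — is the right one and matches the approach of the cited reference. However, the globalization step has two genuine gaps that your write-up does not close.

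First, the decomposition $B_k=\bigcup_{j}B_k^{V_j}$ with a fixed covering $V_1,\ldots,V_N$ of $\Omega$ does not reduce the density claim to the local Thom step. For $B_k^{V_j}$ to be nowhere dense, you must show that \emph{every} solution $u'\in\mathcal{S}$ can be approximated in $\mathcal{C}^2(\Omega)$ by a solution with no degenerate point of type $k$ in $V_j$. The local step you invoke is anchored to a single base solution $u_0$ and to a single point $(X_0,Y_0)$: Lemma~\ref{Lemma 3.1} gives you full rank at $(0,X_0,Y_0)$, hence a small product neighborhood $U\times V$, hence density of good parameters only for $u_0$ and only for that small $V$. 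For a different $u'$, the degenerate set inside $V_j$ is a fresh compact set in possibly unrelated positions, so you would have to redo the entire covering-and-transversality argument for $u'$ inside $V_j$. In other words, proving each $B_k^{V_j}$ nowhere dense already requires the full machinery, so the decomposition buys nothing; the argument should instead proceed by fixing $u_0$, covering the compact degenerate set $K_k(u_0)\subset\Omega$ (not all of $\Omega$) by finitely many Lemma~\ref{Lemma 3.1}-neighborhoods \emph{depending on $u_0$}, applying Thom to the combined family, and then noting that on the complement $\Omega\setminus\bigcup V_j$ the triple was nonzero for $u_0$ and hence stays nonzero by compactness for all nearby solutions.

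Second, and more substantively, you do not explain how to combine the three-parameter families from different base points into a single finite-dimensional family to which Thom's theorem is applied. This is a nontrivial step here because Lemma~\ref{Lemma 3.1} constructs perturbations of the boundary data along the line $\gamma_\kappa=\{X+Y=\kappa\}$ with $\kappa=X_0+Y_0$, so different base points $(X_j,Y_j)$ live on different lines. One must either reformulate all perturbations on a common reference line and propagate through the semilinear system, or localize the perturbations (e.g.\ with disjoint compact supports in $s$) and verify that the combined family still realizes the required rank three in the $\vartheta$-derivative at every covered point. Without this, the application of Thom's parametric transversality to the global family is unjustified. A minor additional point: you write $B_k\subset\bigcup_j B_k^{V_j}$ and conclude $B_k$ is "therefore itself closed"; a subset of a closed set need not be closed (in fact $B_k=\bigcup_j B_k^{V_j}$ here, and $B_k$ is closed directly by the compactness-of-$\Omega$ argument).
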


For future use, we introduce a new space
\begin{equation}\label{Ndef}\mathcal{N}~:=~\Big(\mathcal{C}^3(\mathbb{R})\cap H^1(\mathbb{R})\Big)\times\Big(\mathcal{C}^2(\mathbb{R})\cap L^2(\mathbb{R})\Big),\end{equation} equipped with the norm $$\|(u_{0},u_{1})\|_{\mathcal{N}}~:=~\|u_{0}\|_{\mathcal{C}^3}+\|u_{0}\|_{H^1}+\|u_{1}\|_{\mathcal{C}^2}+\|u_{1}\|_{L^2}.$$

Applying a standard comparison argument, we deduce that, if the initial data $(u_{0},u_{1})\in\mathcal{N}$, then the corresponding solution remains smooth for all $|x|$ sufficiently large. The proof of this lemma
is similar to \cite{BC}, and we omit it here for brevity.
\begin{Lemma}\label{lem_out}
Assume $(u_{0},u_{1})\in\mathcal{N}$ and let $T>0$ be given. Then there exists $r>0$ sufficiently large so that the solution $u=u(x,t)$ of (\ref{vwl})--(\ref{ID}) remains $\mathcal{C}^2$ on the domain $\{(x,t);~t\in[0,T], ~|x|\geq r\}$.
\end{Lemma}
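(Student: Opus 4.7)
The plan is to combine finite speed of propagation with a small-data propagation-of-smoothness argument carried out in the characteristic coordinates of Section \ref{sec_rew}. Since $u_0\in\mathcal C^3$, the function $u_{0,x}$ has bounded second derivative and is therefore uniformly continuous on $\mathbb R$; coupled with $u_{0,x}\in L^2$, this forces $u_{0,x}(x)\to 0$ as $|x|\to\infty$. The same reasoning applied to $u_1\in\mathcal C^2\cap L^2$ yields $u_1(x)\to 0$ at infinity. Using the bounds in \eqref{con}, the initial Riemann invariants $R(x,0)=\alpha u_1+c_2 u_{0,x}$ and $S(x,0)=\alpha u_1+c_1 u_{0,x}$ both vanish at infinity, so for any prescribed $\epsilon>0$ one can choose $r_0>0$ with $|R(\cdot,0)|,|S(\cdot,0)|\leq\epsilon$ on $\{|x|\geq r_0\}$.

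Let $\Lambda<\infty$ be a uniform bound on $|c_1|$ and $c_2$ (provided by \eqref{con}) and set $r:=r_0+\Lambda T$. By finite speed of propagation, $(u,R,S)(x,t)$ on $\{|x|\geq r,\,t\in[0,T]\}$ depends only on the initial data restricted to $\{|x|\geq r_0\}$. The semi-linear system \eqref{semi1}--\eqref{2.18} has $\mathcal C^\infty$ coefficients, and the boundary data along $\gamma_0$ inherits $\mathcal C^2$ regularity from $(u_0,u_1)\in\mathcal C^3\times\mathcal C^2$ (which makes $R_0,S_0\in\mathcal C^2$ and thus $\bar h,\bar g,\bar p,\bar q,\bar\ell,\bar m\in\mathcal C^2$); consequently, the solution $(u,\ell,m,h,g,p,q,x,t)$ of \eqref{semi1}--\eqref{2.18} is $\mathcal C^2$ on every compact subset of the $(X,Y)$-plane.

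To conclude, it suffices to show that $|R|$ and $|S|$ are uniformly bounded in the preimage of $\{|x|\geq r,\,t\in[0,T]\}$: once that is established, $h=1/(1+R^2)$ and $g=1/(1+S^2)$ are uniformly positive there, the Jacobian $\det D\mathcal F=\alpha pqgh/(c_2-c_1)$ is bounded away from zero, and $\mathcal F\colon(X,Y)\mapsto(x,t)$ is a $\mathcal C^2$ diffeomorphism, so that $u(x,t)$ is $\mathcal C^2$ on $\{|x|\geq r,\,t\in[0,T]\}$. The boundedness itself follows from a bootstrap using the balance laws \eqref{balance1}, whose right-hand sides are polynomial in $(R,S)$ of degree at most three with smooth bounded coefficients. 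Starting from $|R|,|S|\leq\epsilon$ on the initial strip and invoking a Gr\"onwall-type estimate on $[0,T]$, we obtain $|R|,|S|\leq C_T\,\epsilon$ throughout the region, provided $\epsilon$ is chosen small enough in terms of $T$ and the total initial energy $\mathcal E_0$.

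The main obstacle is this last bootstrap step: outside the ball of radius $r$ the forward and backward waves still interact through the quadratic source terms in \eqref{R-S-eqn}, and one must rule out nonlinear amplification of $R$ or $S$ before time $T$. Control is obtained by combining the smallness of $R_0,S_0$ on $\{|x|\geq r_0\}$ with the global energy bound $\mathcal E_0$, so that the source integrals remain small on $[0,T]$ and the Gr\"onwall loop closes, mirroring the strategy used in the existence proof of \cite{H}.
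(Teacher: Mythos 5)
Your overall strategy is correct and is essentially the one the authors have in mind (the paper omits the proof, pointing to ``a standard comparison argument'' as in \cite{BC}): $\mathcal N$-regularity forces $u_{0,x},u_1\to 0$ at infinity, hence $R(\cdot,0),S(\cdot,0)$ are small on $\{|x|\ge r_0\}$; finite speed of propagation (with $\Lambda=\sup\max\{|c_1/\alpha|,c_2/\alpha\}$ and $r=r_0+\Lambda T$) confines the domain of dependence of $\{|x|\ge r,\,0\le t\le T\}$ inside $\{|x_0|\ge r_0\}$; a small-data no-blowup bound keeps $h,g$ uniformly positive, so $\det D\mathcal F=\alpha p q g h/(c_2-c_1)$ stays away from zero and $u$ inherits $\mathcal C^2$ regularity from the $\mathcal C^2$ solution of the semilinear system.

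Two points on the last step, though, which is stated too loosely. First, the global energy bound $\mathcal E_0$ plays no role: what closes the argument is the uniform coefficient bounds in \eqref{con} together with the pointwise smallness of the data on $\{|x|\ge r_0\}$; ``the source integrals remain small'' is not the mechanism, since the needed control is pointwise along characteristics, not an integral energy estimate. Second, the smallness must be propagated on the full domain of determinacy $D=\{(y,s):\ |y|\ge r_0+\Lambda s,\ 0\le s\le T\}$ rather than only on $\{|x|\ge r,\,0\le t\le T\}$, because characteristics issuing from the latter set pass through $\{r_0\le |y|<r\}$ at earlier times. Setting $M(t):=\sup\{|R|+|S|:\ (y,s)\in D,\ s\le t\}$ and integrating \eqref{R-S-eqn} along backward and forward characteristics (which remain in $D$ for earlier times) gives
\begin{equation*}
M(t)\ \le\ 2\epsilon\ +\ C\int_0^t\bigl(1+M(s)\bigr)M(s)\,ds,
\end{equation*}
and a standard continuation argument (bootstrap threshold $M\le 1$, then Gr\"onwall) yields $M\le 2\epsilon e^{CT}$ on $[0,T]$ once $\epsilon$ is chosen small enough depending only on $T$ and \eqref{con}. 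That is the precise form of your claimed bound $|R|,|S|\le C_T\epsilon$, and no appeal to $\mathcal E_0$ or to the balance laws \eqref{balance1} is required.
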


With the help of Lemma \ref{Lemma 4.1}, we can now prove the generic regularity of conservative solutions to \eqref{vwl}--\eqref{con} of the Theorem \ref{thm_reg}.
\begin{proof}[\bf Proof of Theorem \ref{thm_reg}]
Let the initial data $(\hat{u}_{0},\hat{u}_{1})\in\mathcal{N}$ be given and set the open ball
\[B_\delta~:=~\{(u_{0},u_{1})\in\mathcal{N};~\|(u_{0},u_{1})-(\hat{u}_{0},\hat{u}_{1})\|_{\mathcal{N}}<\delta\}.\]
To prove our main theorem, it suffices to prove that, for any $(\hat{u}_{0},\hat{u}_{1})\in\mathcal{N}$, there exists an open dense subset $\widehat{\mathcal{M}}\subset B_\delta$, such that, for every initial data $(u_0,u_1)\in\widehat{\mathcal{M}}$, the conservative solution $u=u(x,t)$ of  \eqref{vwl}--\eqref{con} is twice continuously differentiable in the complement of finitely many characteristic curves, within the domain $\mathbb{R}\times[0,T]$.
We prove this result by two steps.

{\bf (1).(Construction of an open dense set $\widehat{\mathcal{M}}$)}  Since $(\hat{u}_{0},\hat{u}_{1})\in\mathcal{N}$, in view of Lemma \ref{lem_out}, we can choose $r>0$ large enough so that the corresponding functions $\hat{R}, \hat{S}$ in \eqref{R-S} being uniformly bounded on the domain of the form $\{(x,t); t\in[0,T], |x|\geq r\}$. In particular, we can choose $\delta>0$, such that, for initial data
 $(u_{0},u_{1})\in B_\delta$, the corresponding solution $u=u(x,t)$ of \eqref{vwl} being twice continuously differentiable
on the outer domain $\{(x,t); ~t\in[0,T], |x|\geq \varrho\}$,  for some $\varrho>0$ sufficiently large. This means the singularities of $u(x,t)$ in the set $\mathbb{R}\times[0,T]$ only appear on the compact set $$\mathcal{U}~:=~[-\varrho, \varrho]\times [0,T].$$
Denote $\mathcal{F}$ be the map of $(X,Y)\mapsto \mathcal{F}(X,Y)~:=~(x(X,Y),t(X,Y))$. Then, we can easily obtain the inclusion $\mathcal{U}\subset \mathcal{F}(\Omega)$ by choosing $M$ large enough and by possibly shrinking the radius $\delta$, where $\Omega$ is a domain defined in Lemma \ref{Lemma 4.1}.

Now, we defined the subset $\widehat{\mathcal{M}}\subset B_\delta$ as follows: $(u_{0},u_{1})\in \widehat{\mathcal{M}}$ if the following items are satisfied

(I). $(u_{0},u_{1})\in B_\delta$;

(II). for any $(X,Y)$ such that $(x(X,Y),t(X,Y))\in\mathcal{U},$  the values \eqref{4.1} are never attained, here $(u, \ell,  m, h, g, p, q)$ is
the corresponding solution of \eqref{semi1}--\eqref{semi4} with boundary data \eqref{2.19}.

We claim the set $\widehat{\mathcal{M}}$ is open and dense in $B_\delta$, we omit the detailed proof here for brevity, since a similar procedure of this result can be found in \cite{BC, CCD}.

{\bf (2). ($u$ is piecewise smooth)} Now, it remains to verify that for every initial data $(u_{0},u_{1})\in\widehat{\mathcal{M}}$, the corresponding solution $u(x,t)$ of \eqref{vwl} is piecewise $\mathcal{C}^2$ on the domain $[0,T]\times\mathbb{R}$. Toward this goal, we recall that $u(x,t)$ is $\mathcal{C}^2$ on the outer domain $\{(x,t);~ t\in [0,T ], |x|\geq \varrho\}$, so in the following we just need to consider the singularities of solution $u(x,t)$ on the inner domain $\mathcal{U}$. Recall the inclusion $\mathcal{U}\subset \mathcal{F}(\Omega)$, we know that, for every point $(X_0,Y_0)\in\Omega$, there are two cases:

{\it CASE 1.} If $h(X_0,Y_0)\neq 0$ and $g(X_0,Y_0)\neq 0$, we can obtain the determinant of the Jacobian matrix
 \begin{equation*}
\text{ det } \left(
\begin{array}{cc}
  x_X& x_Y\\
 t_X& t_Y\\
  \end{array}
\right) =\frac{\alpha pq hg}{c_2-c_1}>0,\end{equation*}
by $\eqref{semi4}_2$ and \eqref{2.18}. This implies that the map $(X,Y)\mapsto (x,t)$ is locally invertible in a neighborhood of $(X_0,Y_0)$. Clearly, the solution $u(x,t)$ is $\mathcal{C}^2$ in a neighborhood of $(x(X_0,Y_0),$ $ t(X_0,Y_0))$.

{\it CASE 2.} If $h(X_0,Y_0)=0$, we can obtain $\ell=0$, immediately. In this case, we claim that either $\ell_X \neq  0$ or $\ell_Y\neq 0$. In fact, by the equation \eqref{semi1} and the definition of $a_1$ in \eqref{R-S-eqn}, at the point $(X_0,Y_0)$, we have
\begin{equation*}
\ell_Y(X_0,Y_0)=\displaystyle-\frac{q}{c_2-c_1}a_1g=\frac{\alpha q g}{2(c_2-c_1)^2}\partial_u \lambda_-.
\end{equation*}
This together with the construction of $\widehat{\mathcal{M}}$ that the values $(h,g, \ell_X)=(0,0,0) $ and
$(h,\partial_u \lambda_-,\ell_X)=(0,0,0)$ are never attained in $\Omega$, it is easy to see that $\ell_X\neq 0$ or $\ell_Y\neq  0$.

By continuity, we can choose $\eta>0,$ so that in the open neighborhood $$\Omega' :=~\{(X,Y); ~|X|<M+\eta,|Y|< M+\eta\}, $$
the values listed in \eqref{4.1} are never attained. Applying the implicit function theorem, we derive that the sets
 \[\chi^h:=~\{(X,Y)\in\Omega';~h(X,Y)=0\},\quad
 \chi^g:=\{(X,Y)\in\Omega'; ~g(X,Y)=0\}\]
 are 1--dimensional embedded manifold of class $\mathcal{C}^2$.
In particular, the set $\chi^h\cap\Omega$ has finite connected components. Indeed, assume on the contrary that there exists a sequence of points $P_1, P_2,\cdots\in\chi^h\cap \Omega$  belonging to distinct components. Then we can choose a subsequence, denote still by $P_i$, such that $P_i\to \bar{P}$ for some $\bar{P}\in \chi^h\cap\Omega$. Since $h(\bar{P})=0,$ then $(\ell_X,\ell_Y)(\bar{P})\neq (0,0)$, which together with the implicit function theorem implies that there is a neighborhood $\Gamma$ of $\bar{P}$ such that $\chi^h \cap\Gamma$ is a connected $\mathcal{C}^2$ curve. Thus, $P_i\in\chi^h \cap\Gamma$ for all $i$ large enough, providing a contradiction on the assumption
that $P_i$ belongs to distinct components.

To complete the proof, we need to study more details on the image of the singular sets $\chi^h$ and $\chi^g$, since the set for the singular points $(t,x)$ of $u$ coincides with the image of the two sets $\chi^h, \chi^g$ under the $C^2$ map $(X,Y)\mapsto \mathcal{F}(X,Y)= (x(X,Y),t(X,Y))$.

By the previous argument, there are only finite many points $P_i=(X_i,Y_i), i=1,\cdots, m$, inside set $\Omega'$, where $h=0, \ell= 0,$ and $ \ell_X= 0$. Moreover, by \eqref{4.1}, at a point $(X_0,Y_0)\in \chi^h\cap \chi^g$, we have $\ell_X\neq  0, \ell_Y= 0,  m_X= 0,  m_Y\neq  0$. Thus, the two curves ${h=0}$ and ${g=0}$ intersect perpendicularly. Therefore, there are only finitely many such intersection points $Q_\jmath=(X_\jmath',Y_\jmath'), \jmath=1,\cdots, n$, inside the compact set $\Omega$.

Moreover, the set $\chi^h \verb|\| \{P_1,\cdots,P_m,Q_1,\cdots,Q_n\}$ has finitely many connected components which intersect $\Omega$. Consider any one of these components, which is a connected curve, say $\gamma_j$, such that $h=0, \ell=0$ and $\ell_X\neq 0$ for any $(X,Y)\in \gamma_j$. Thus, for a suitable function $\varphi_j$, this curve can be expressed as $$\gamma_j=\{(X,Y):~X=\varphi_j(Y), a_j<Y<b_j\},$$
We claim that the image $\Lambda(\gamma_j)$ is a $\mathcal{C}^2$ curve in the $x$-$t$ plane. Indeed, on the open interval $(a_j,b_j)$, the differential of the map $Y\mapsto \big(x(\varphi_j(Y),Y), t(\varphi_j(Y),Y)\big)$ does not vanish. This is true, because by \eqref{2.18}, we have
\[\frac{d}{dY}t(\varphi_j(Y),Y))=t_X\varphi'_j+t_Y=0\cdot\varphi'_j+\frac{\alpha qg}{c_2-c_1}>0,\]
since $g, c_2-c_1, q>0$. As a consequence, the singular set $\mathcal{F}(\chi^h)$ is the union of the finitely points $p_i=\mathcal{F}(P_i), i=1,\cdots, m,$ $ q_\jmath=\mathcal{F}(Q_\jmath), \jmath=1,\cdots,n,$ together with finitely many $\mathcal{C}^2$-curve $\mathcal{F}(\gamma_j)$.   Obviously, the same representation is valid for the image $\mathcal{F}(\chi^g)$. This completes the proof of Theorem \ref{thm_reg}.
\end{proof}

To this end, we introduce suitable regularity condition that allows us to define the
tangent vectors in the norm \eqref{Finsler v} between any two generic solutions and hence compute its weighted length.
In what follows, we are interested not in a single generic solution, but a path of generic solutions $\theta\mapsto u^\theta,\theta\in[0,1]$.

\begin{Definition}\label{def_repath}
We say that a path of initial data $\gamma^0:\theta\mapsto (u_0^\theta,u_1^\theta)$, $\theta\in[0,1]$ is a {\bf piecewise regular path} if the following conditions hold.

{\rm (i)} There exists a continuous map $(X,Y,\theta)\mapsto (u, \ell, m, h, g, p, q, x,t)$ such that the semilinear system \eqref{semi1}--\eqref{2.18} holds for $\theta\in[0,1]$, and the function $u^\theta(x,t)$ whose graph is
\begin{equation*}
\text{ Graph }(u^\theta)=\{(x,t,u)(X,Y,\theta);~(X,Y)\in \mathbb{R}^2\}
\end{equation*}
provides the conservation solution of \eqref{vwl} with initial data $u^\theta(x,0)=u^\theta_0(x),u_t^\theta(x,0)=u_1^\theta(x)$.

{\rm (ii)} There exist finitely many values $0=\theta_0<\theta_1<\cdots<\theta_N=1$ such that the map $(X,Y,\theta)\mapsto (u, \ell, m, h, g, p, q, x,t)$ is $\mathcal{C}^\infty$ for $\theta\in(\theta_{i-1},\theta_i), i=1,\cdots, N$, and the solution $u^\theta=u^\theta(x,t)$ has only generic singularities at time $t=0$.

In addition, if for all $\theta\in[0,1]\backslash\{\theta_1,\cdots,\theta_N\}$, the solution $u^\theta$ has only generic singularities for $t\in [0,T]$, then we say that the path of solution $\gamma^t: \theta\mapsto (u^\theta,u^\theta_t)$ is {\bf piecewise regular} for $t\in[0,T]$.
\end{Definition}

Towards our goal, we state the following result, which is an application of Theorem \ref{thm_reg}.
\begin{Theorem}\label{thm_repath}
Assume the generic condition \eqref{gencon} holds.
For any fixed $T>0,$ let $\theta\mapsto( u^\theta,\ell^\theta,m^\theta,h^\theta, g^\theta,$ $p^\theta, q^\theta, x^\theta, t^\theta ), \theta\in[0,1],$ be a smooth path of solutions to the system \eqref{semi1}--\eqref{2.18}. Then there exists a sequence of paths of solutions $\theta\mapsto( u^\theta_n,\ell^\theta_n,m^\theta_n,h^\theta_n, g^\theta_n, p^\theta_n, q^\theta_n,$ $ x^\theta_n, t^\theta_n ), $ such that

{\rm (i)} For each $n\geq 1$, the path of the corresponding solution of \eqref{vwl} $\theta\mapsto u_n^\theta$ is regular for $t\in[0,T]$ in the sense of Definition \ref{def_repath}.

{\rm (ii)} For any bounded domain $\Sigma$ in the $(X$,$Y)$ space, the functions $( u^\theta_n,\ell^\theta_n,m^\theta_n,h^\theta_n, g^\theta_n, p^\theta_n, q^\theta_n,$ $  x^\theta_n, t^\theta_n )$ converge to $( u^\theta,\ell^\theta,m^\theta,h^\theta, g^\theta,p^\theta, q^\theta, x^\theta, t^\theta )$ uniformly in $\mathcal{C}^k([0,1]\times\Sigma)$, for every $k\geq 1$, as $n\to \infty.$
\end{Theorem}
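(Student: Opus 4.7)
The plan is to deduce Theorem~\ref{thm_repath} by applying a parametric version of Thom's transversality theorem to the one-parameter family of smooth solutions, in direct analogy with the proof of Lemma~\ref{Lemma 4.1} and Theorem~\ref{thm_reg}, but now with the homotopy parameter $\theta$ playing the role of an additional independent coordinate. The six degeneracy conditions listed in~\eqref{4.1} each cut out a subvariety of codimension $3$ in the relevant jet space; in the three-dimensional enlarged parameter space $(X,Y,\theta)$ a generic smooth perturbation of the boundary data will meet these subvarieties only in a $0$-dimensional subset, and the finitely many resulting bad points furnish precisely the exceptional parameters $\theta_1,\dots,\theta_N$ appearing in Definition~\ref{def_repath}.

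First I would fix a compact rectangle $\Omega=[-M,M]^2$ in the $(X,Y)$-plane. Because the given path is smooth in $\theta\in[0,1]$, the total energy stays uniformly bounded along the path, so the finite propagation speed estimate that underlies Lemma~\ref{lem_out} may be made uniform in $\theta$; choosing $M$ large enough guarantees that, for every $\theta$, $\mathcal{F}^\theta(\Omega)$ covers the strip $[-\varrho,\varrho]\times[0,T]$ on which any singularity of $u^\theta$ can occur, while outside some large $|x|$ range all $u^\theta$ are automatically $C^2$.

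Next I would construct an $N$-parameter smooth deformation $\vartheta\in\mathbb{R}^N$ of the boundary data on $\gamma_0$, extending the perturbations built in Lemma~\ref{Lemma 3.1} so that they act simultaneously on the full path $\theta\mapsto(\bar u^\theta,\bar\ell^\theta,\ldots,\bar x^\theta)$ via bumps that are smooth and compactly supported in the $(s,\theta)$ variables. The construction in Lemma~\ref{Lemma 3.1} applied pointwise in $\theta$ shows that at any prescribed base point $(X_0,Y_0,\theta_0)\in\Omega\times[0,1]$ the Jacobians
\[
D_\vartheta(h,\ell_X,\ell_{XX}),\qquad D_\vartheta(h,g,\ell_X),\qquad D_\vartheta(h,\ell_X,\partial_u\lambda_-),
\]
together with their $g$--$m$ symmetric counterparts, can be made to have full rank $3$. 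By the parametric Thom transversality theorem (as in \cite{GG} and the formulations used in \cite{BC,CCD}), for a generic $\vartheta$ arbitrarily close to $0$ the six bad loci are transverse to $\Omega\times[0,1]$ and hence intersect it in only finitely many isolated points. Their $\theta$-coordinates provide the exceptional set $\{\theta_1,\dots,\theta_N\}$; outside this set the corresponding solution $u^\theta$ has only generic singularities on $[0,T]$, as in the proof of Theorem~\ref{thm_reg}, which is condition~(i) of Definition~\ref{def_repath}.

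Finally, picking a sequence $\vartheta_n\to 0$ of such generic perturbations and invoking standard continuous-dependence theory for the semilinear hyperbolic system~\eqref{semi1}--\eqref{2.18} on smooth non-characteristic boundary data yields the $C^k$ convergence asserted in~(ii), uniformly on $[0,1]\times\Sigma$ for any bounded $\Sigma\subset\mathbb{R}^2$. The main obstacle I anticipate is the setup of the transversality step itself: one must glue the pointwise perturbations of Lemma~\ref{Lemma 3.1} into an honest $(\theta,\vartheta)$-dependent family of boundary data so that the full-rank Jacobian conditions hold \emph{simultaneously} at every base point of $\Omega\times[0,1]$, and verify that a parametric Thom-type theorem genuinely applies to this infinite-dimensional perturbation space. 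Once that is in place, the remaining bookkeeping mirrors the argument already carried out for a single initial datum in the proof of Theorem~\ref{thm_reg}.
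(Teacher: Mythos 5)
The paper itself does not provide a proof of Theorem~\ref{thm_repath}; it states only that the proof ``is similar to \cite{BC}.'' Your reconstruction matches what that reference does for the variational wave equation, adapted to the larger system here. The key step — treating the homotopy parameter $\theta$ as an extra independent variable, so that the six codimension-$3$ degeneracies of \eqref{4.1}, which Lemma~\ref{Lemma 4.1} arranges to be \emph{avoided entirely} in the two-dimensional $(X,Y)$-domain, instead generically cut out a $0$-dimensional (hence finite) set in the three-dimensional domain $\Omega\times[0,1]$, whose $\theta$-coordinates then form the exceptional set $\{\theta_1,\dots,\theta_N\}$ of Definition~\ref{def_repath} — is exactly the right idea, and the reduction to the compact region via Lemma~\ref{lem_out} (uniformized in $\theta$ by boundedness of energy along a smooth path) is correct. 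The ``gluing'' obstacle you flag is a genuine part of the argument, but it is the standard compactness step in Thom-type transversality: one covers $\Omega\times[0,1]$ by finitely many neighborhoods on which the rank-$3$ conditions of Lemma~\ref{Lemma 3.1} (applied at frozen $\theta$) hold, assembles a finite-dimensional perturbation space from the union of these local deformations, and invokes the parametric transversality theorem of \cite{GG}; this does not require the Jacobians to have full rank simultaneously at every point for a single fixed set of bump functions. Your final step — letting $\vartheta_n\to 0$ and using smooth dependence of solutions to the semilinear system \eqref{semi1}--\eqref{2.18} on smooth boundary data along the non-characteristic line $\gamma_0$ to get $\mathcal{C}^k$ convergence on bounded sets — is correct as well. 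In short, your proposal is a faithful account of the intended argument, with the one acknowledged technical point being routine rather than a genuine gap.
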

 The proof of this lemma
is similar to \cite{BC}, and we omit it here for brevity.

\section{Metric for piecewise smooth solutions}\label{sec_piecewise}

In this section, we extend the Lipschitz metric for smooth solutions in Section \ref{sec_smooth} to piecewise smooth solutions with only generic singularities.

\subsection{Tangent vectors in transformed coordinates}
To begin with, we express the norm of tangent
vectors \eqref{norm1} in transformed coordinates $X$-$Y$.

Let $u(x,t)$ be a reference solution of \eqref{vwl} and $u^\varepsilon(x,t)$ be a family of perturbed solutions. In the $(X$,$Y)$ plane, denote $(u, \ell, m, h, g, p, q, x,t)$ and $( u^\varepsilon,\ell^\varepsilon,m^\varepsilon,h^\varepsilon, g^\varepsilon, p^\varepsilon, q^\varepsilon, x^\varepsilon, t^\varepsilon)$ be the corresponding smooth solutions of \eqref{semi1}--\eqref{2.18}, and moreover assume the perturbed solutions take the form
\begin{equation*}
( u^\varepsilon,\ell^\varepsilon,m^\varepsilon,h^\varepsilon, g^\varepsilon, p^\varepsilon, q^\varepsilon, x^\varepsilon, t^\varepsilon)=(u, \ell, m, h, g, p, q, x,t)+\varepsilon(U, L, M,H, G, P, Q,\mathcal{X},\mathcal{T})+o(\varepsilon).
\end{equation*}
Here we denote the curve in ($X,Y)$ plane by
\begin{equation}\label{curve}
\Gamma_\tau=\{(X,Y)\,|\,t(X,Y)=\tau\}=\{(X,Y(\tau, X));X\in\mathbb{R}\}=\{(X(\tau, Y),Y);Y\in\mathbb{R}\}\end{equation}
and the perturbed curve as
 $$ \Gamma_\tau^\varepsilon=\{(X,Y)\,|\,t^\varepsilon(X,Y)=\tau\}=\{(X,Y^\varepsilon(\tau, X));X\in\mathbb{R}\}=\{(X^\varepsilon(\tau, Y),Y);Y\in\mathbb{R}\}.$$
Notice that the coefficients of system \eqref{semi1}--\eqref{2.18} are smooth, it thus follows that the first order perturbations satisfy a linearized system and are well defined for $(X,Y)\in\mathbb{R}^2$.

Now, we are ready to derive an expression for $I_0$--$I_6$ of \eqref{norm1} in terms of $(U, L, M,H, G, P, Q,$ $\mathcal{X},\mathcal{T})$. First, we observe that
$$t^\varepsilon\big(X,Y^\varepsilon(\tau,X)\big)
=t^\varepsilon\big(X^\varepsilon(\tau,Y),Y\big)=\tau.$$
By the implicit function theorem, at $\varepsilon=0$, it holds that
\begin{equation}\label{xvar0}
\frac{\partial X^\varepsilon}{\partial\varepsilon}\Big|_{\varepsilon=0}
=-\mathcal{T}\frac{c_2-c_1}{\alpha hp},
\quad {\rm and}\quad
\frac{\partial Y^\varepsilon}{\partial\varepsilon}\Big|_{\varepsilon=0}=-\mathcal{T}\frac{c_2-c_1}{\alpha gq}.
\end{equation}

(1). The change in $x$ is
\begin{equation}\label{wXchange}
\begin{split}
w&=\displaystyle\lim_{\varepsilon\to 0}\frac{x^\varepsilon\big(X,Y^\varepsilon(\tau,X)\big)
-x\big(X,Y(\tau,X)\big)}{\varepsilon}\\
&= \mathcal{X}\big(X,Y(\tau,X)\big)+x_Y\cdot \frac{\partial Y^\varepsilon}{\partial \varepsilon}\Big|_{\varepsilon=0}
=\big(\mathcal{X}-\frac{c_1}{\alpha}\mathcal{T}\big)(X,Y(\tau,X)).
\end{split}
\end{equation}
Similarly, we obtain
\begin{equation}\label{zYchange}
\begin{split}
z&=\displaystyle\lim_{\varepsilon\to 0}\frac{x^\varepsilon\big(X^\varepsilon(\tau,Y),Y\big)
-x\big(X(\tau,Y),Y\big)}{\varepsilon}\\
&= \mathcal{X}\big(X(\tau,Y),Y\big)+x_X\cdot \frac{\partial X^\varepsilon}{\partial \varepsilon}\Big|_{\varepsilon=0}
=\big(\mathcal{X}-\frac{c_2}{\alpha}\mathcal{T}\big)(X(\tau,Y),Y).
\end{split}
\end{equation}

(2). For the change in $u$, we first observe from \eqref{semi4} and \eqref{xvar0} that,
\begin{equation*}
\begin{split}
v+u_x w&=\displaystyle\lim_{\varepsilon\to 0}\frac{u^\varepsilon\big(X,Y^\varepsilon(\tau,X)\big)
-u\big(X,Y(\tau,X)\big)}{\varepsilon}\\
&= U\big(X,Y(\tau,X)\big)+u_Y\cdot \frac{\partial Y^\varepsilon}{\partial \varepsilon}\Big|_{\varepsilon=0}
=\big(U-\frac{\mathcal{T} m}{\alpha g}\big)(X,Y(\tau,X)).
\end{split}
\end{equation*}
This together with \eqref{R-S-eqn} gives
\begin{equation}\label{uXchange}
v+\frac{Rw-sz}{c_2-c_1}=v+u_xw+\frac{w-z}{c_2-c_1}S=U(X,Y(\tau,X)).
\end{equation}

(3). In addition, we derive an expression for the terms $J_3^\pm$ in \eqref{norm1}. Using \eqref{semi1}, \eqref{semi2}  and  \eqref{xvar0}, a direct computation gives rise to
\begin{equation*}
\begin{split}
r+wR_x&=\frac{d}{d\varepsilon}\frac{\ell^\varepsilon}{h^\varepsilon}\big(X,Y^\varepsilon(\tau,X)\big)\Big|_{\varepsilon=0}
=\lim_{\varepsilon\to 0}\frac{\frac{\ell^\varepsilon}{h^\varepsilon}\big(X,Y^\varepsilon(\tau,X)\big)
-\frac{\ell}{h}\big(X,Y(\tau,X)\big)}{\varepsilon}\\
&= \frac{1}{h}\big(L+\ell_Y\cdot \frac{\partial Y^\varepsilon}{\partial \varepsilon}\Big|_{\varepsilon=0}\big)-\frac{\ell}{h^2}\big(H+h_Y\cdot \frac{\partial Y^\varepsilon}{\partial \varepsilon}\Big|_{\varepsilon=0}\big)\\
&=\frac{L}{h}-\frac{\ell H}{h^2}-\frac{\mathcal{T}}{\alpha hg}[a_1 g+a_2 h-(a_1+a_2)(gh+m\ell)+c_2bhm-d_1g\ell],
\end{split}
\end{equation*}
and
\begin{equation*}
\begin{split}
s+zS_x&=\frac{d}{d\varepsilon}\frac{m^\varepsilon}{g^\varepsilon}\big(X^\varepsilon(\tau,Y),Y\big)\Big|_{\varepsilon=0}
\\
&= \frac{1}{g}\big(M+m_X\cdot \frac{\partial X^\varepsilon}{\partial \varepsilon}\Big|_{\varepsilon=0}\big)-\frac{m}{g^2}\big(G+g_X\cdot \frac{\partial X^\varepsilon}{\partial \varepsilon}\Big|_{\varepsilon=0}\big)\\
&=\frac{M}{g}-\frac{mG}{g^2}-\frac{\mathcal{T}}{\alpha hg}[-a_1 g-a_2 h+(a_1+a_2)(gh+m\ell)+c_1bg\ell-d_2hm].
\end{split}
\end{equation*}
Thus, we obtain from \eqref{rseq} that
\begin{equation}\label{rXchange}
\hat{r}
=\frac{L}{h}-\frac{\ell H}{h^2}-\frac{\mathcal{T}}{\alpha h}(a_1-a_1h-d_1\ell),
\end{equation}
and
\begin{equation}\label{rYchange}
\hat{s}=\frac{M}{g}-\frac{mG}{g^2}-\frac{\mathcal{T}}{\alpha g}(-a_2 +a_2g-d_2m).
\end{equation}

(4). To continue, we first use the same procedure to derive the change in the base measure with density $1+R^2$  as
\begin{equation}\label{pXchange}
\frac{d}{d\varepsilon}p^\varepsilon\big(X,Y^\varepsilon(\tau,X)\big)\Big|_{\varepsilon=0}
= P+p_Y\cdot \frac{\partial Y^\varepsilon}{\partial \varepsilon}\Big|_{\varepsilon=0}=P-\frac{c_2-c_1 }{\alpha gq}\mathcal{T}p_Y.
\end{equation}
Moreover, the change in base measure with density $R^2$ can be calculated as
\begin{equation}\label{R2change}
\begin{split}
&\frac{d}{d\varepsilon}\Big(\big(p^\varepsilon(1-h^\varepsilon)\big)\big(X,Y^\varepsilon(\tau,X)\big)\Big)\Big|_{\varepsilon=0}\\
&= \big(P+p_Y\cdot \frac{\partial Y^\varepsilon}{\partial \varepsilon}\Big|_{\varepsilon=0}\big)(1-h)-p\big(H+h_Y\cdot \frac{\partial Y^\varepsilon}{\partial \varepsilon}\Big|_{\varepsilon=0}\big)\\
&=\big(P-\frac{c_2-c_1 }{\alpha gq}\mathcal{T}p_Y\big)(1-h)-p\big(H-\frac{c_2-c_1 }{\alpha gq}\mathcal{T}h_Y\big).
\end{split}\end{equation}
Finally, we can achieve the change in base measure with density 1 by subtracting \eqref{pXchange} from \eqref{R2change}
\begin{equation}\label{1change}
\begin{split}
h\big(P-\frac{c_2-c_1 }{\alpha gq}\mathcal{T}p_Y\big)+p\big(H-\frac{c_2-c_1 }{\alpha gq}\mathcal{T}h_Y\big).
\end{split}\end{equation}
Notice that
$$(1+R^2)dx=pdX,\quad (1+S^2)dx=-qdY.$$
Hence, according to \eqref{wXchange}--\eqref{1change}, the weighted norm \eqref{norm1} can be rewritten as a line integral over the line $\Gamma_\tau$ defined in \eqref{curve}, More specifically, we have
\begin{equation}\label{normXY}
\|(v,w,\hat{r},z,\hat{s})\|_{(u,R,S)}=\sum_{k=0}^6\kappa_k\int_{\Gamma_\tau}
\Big(|J_k|\mathcal{W}^-dX+|H_k|\mathcal{W}^+dY\Big),
\end{equation}
where
\begin{equation*}
\begin{split}
J_0&=\big(\mathcal{X}-\frac{c_1}{\alpha}\mathcal{T}\big)ph,\\
J_1&=\big(\mathcal{X}-\frac{c_1}{\alpha}\mathcal{T}\big)p,\\
J_2&=Up,\\
J_3&=Lp-\frac{\ell H}{h}p-\frac{\mathcal{T}p}{\alpha }(a_1-a_1h-d_1\ell),\\
J_4&=hP+pH+\frac{2p\mathcal{T}}{\alpha}\big(a_1\ell+
\frac{c_1\partial_x\alpha-\alpha\partial_x c_1}{2\alpha}h\big),\\
J_5&=\ell P+\frac{\ell H}{h}p+\frac{2p\mathcal{T}\ell }{\alpha h}\big(a_1\ell+
\frac{c_1\partial_x\alpha-\alpha\partial_x c_1}{2\alpha}h\big),\\
J_6&=(1-h)P-pH-\frac{c_1\partial_x c_2-c_2\partial_x c_1}{\alpha(c_2-c_1)}\mathcal{T}p(1-h),\\
\end{split}
\end{equation*}
and
\begin{equation*}
\begin{split}
H_0&=\big(\mathcal{X}-\frac{c_2}{\alpha}\mathcal{T}\big)qg,\\
H_1&=\big(\mathcal{X}-\frac{c_2}{\alpha}\mathcal{T}\big)q,\\
H_2&=Uq,\\
H_3&=Mq-\frac{mG}{g}q-\frac{\mathcal{T}q}{\alpha }(-a_2 +a_2g-d_2m),\\
H_4&=gQ+qG+\frac{2q\mathcal{T}}{\alpha}\big(-a_2m+
\frac{c_2\partial_x\alpha-\alpha\partial_x c_2}{2\alpha}g\big),\\
H_5&=mQ+\frac{mG}{g}q+\frac{2q\mathcal{T}m}{\alpha g}\big(-a_2m+
\frac{c_2\partial_x\alpha-\alpha\partial_x c_2}{2\alpha}g\big),\\
H_6&=(1-g)Q-qG-\frac{c_1\partial_xc_2-c_2\partial_x c_1}{\alpha(c_2-c_1)}\mathcal{T}q(1-g).\\
\end{split}
\end{equation*}
It is easy to verify that each integrands $J_k, H_k$ are smooth, for $k=0,1,2,4,6$. On the other hand, for the term $\frac{\ell H}{h}p$ in $J_3$ and $J_5$, we first observe that, $$(\ell^\varepsilon)^2+(h^\varepsilon)^2=h^\varepsilon.$$
Differentiating this equation with respect to $\varepsilon$, at $\varepsilon=0$, it holds that
$$2\ell L+2hH=H.$$
With this help, we achieve
\begin{equation*}
\frac{\ell H}{h}p=\frac{2\ell^2 L+2\ell hH}{h}p=\frac{2(h-h^2) L+2\ell hH}{h}p=2p[(1-h)L+\ell H],
\end{equation*}
here we have used the fact that $\ell^2+h^2=h.$ Therefore, $J_3$ and $J_5$ are also smooth. In a similar way, we can get the smoothness of $H_3$ and $H_5$.

\subsection{Length of piecewise regular paths}\label{sub_piecewise}
In this part, we define the length of a piecewise regular path $\gamma^t: \theta\mapsto \big(u^\theta(t),u^\theta_t(t)\big)$, and examine the appearance of the generic singularity will not impact the Lipschitz property of this metric.
\begin{Definition}\label{def_piece}
The length $\|\gamma^t\|$ of the piecewise regular path $\gamma^t: \theta\mapsto \big(u^\theta(t),u^\theta_t(t)\big)$ is defined as
\begin{equation}\label{piecedef}
    \|\gamma^t\|=\inf_{\gamma^t}\int_0^1\Big\{\sum_{k=0}^6\kappa_k\int_{\Gamma_t^\theta}
\Big(|J_k^\theta|\mathcal{W}^-dX+|H_k^\theta|\mathcal{W}^+dY\Big)\Big\}\,d\theta,
\end{equation}
where the infimum is taken over all piecewise smooth relabelings of the $X$-$Y$ coordinates and $\Gamma_\tau^\theta:=\{(X,Y);t^\theta(X,Y)=\tau\}$
\end{Definition}

\begin{Remark}
In general, there are many distinct solutions to the system \eqref{semi1}--\eqref{2.18} which yields the same solution $u=u(x,t)$ of \eqref{vwl}. In fact, suppose $\varphi,\psi: \mathbb{R}\mapsto\mathbb{R}$ be two $\mathcal{C}^2$ bijections, with $\varphi',\psi'>0$. Consider a particular solution $(u, \ell, m, h, g, p, q, x,t)$ to the system \eqref{semi1}--\eqref{2.18}, and let the new independent and dependent variables $(\widetilde{X}, \widetilde{Y})$ and  $(\tilde{u}, \tilde{\ell}, \tilde{m}, \tilde{h}, \tilde{g}, \tilde{p}, \tilde{q}, \tilde{x},\tilde{t})$ be defined by
\begin{equation}\label{variable}
X=\varphi(\widetilde{X}),\quad Y=\psi(\widetilde{Y}),
\end{equation}
\begin{equation*}
\begin{cases}
(\tilde{u}, \tilde{\ell}, \tilde{m}, \tilde{h}, \tilde{g}, \tilde{x},\tilde{t})(\widetilde{X},\widetilde{Y})=(u, \ell, m, h, g, x,t)(X,Y),\\
\tilde{p}(\widetilde{X},\widetilde{Y})=p(X,Y)\cdot\varphi'(\widetilde{X}),\\
\tilde{q}(\widetilde{X},\widetilde{Y})=q(X,Y)\cdot\psi'(\widetilde{Y}).
\end{cases}
\end{equation*}
It is easy to see that $(\tilde{u}, \tilde{\ell}, \tilde{m}, \tilde{h}, \tilde{g}, \tilde{p},\tilde{q}, \tilde{x},\tilde{t})(\widetilde{X},\widetilde{Y})$ is also the solution to the same system \eqref{semi1}--\eqref{2.18}, and the set
\begin{equation}\label{setre}
\{\big(\tilde{x}(\widetilde{X},\widetilde{Y}),
\tilde{t}(\widetilde{X},\widetilde{Y}),
\tilde{u}(\widetilde{X},\widetilde{Y})\big);~~~~
(\widetilde{X},\widetilde{Y})\in\mathbb{R}^2\}
\end{equation}
concides with the set \eqref{2.20}. We thus derive that the set \eqref{setre} is another graph of the same solution $u(x,t)$ of \eqref{vwl}. One can regard the variable transformation \eqref{variable} simply as a relabeling of forward and backward characteristics, in the solution $u(x,t)$. We refer the readers to \cite{GHR} for more details on the relabeling symmetries, in connection with the Camassa-Holm equation.
\end{Remark}

Our main result in this section is stated  as follows, which extends the Lipschitz property in Lemma \ref{lem_est} to piecewise smooth solutions with generic singularities.
\begin{Theorem}\label{thm_length}
 Let $T>0$ be given, consider a path of solutions $\theta\mapsto \big(u^\theta(t),u^\theta_t(t)\big)$ of \eqref{vwl}, which is piecewise regular for $t\in[0,T]$. Moreover, the total energy is less than some constants $E>0$. Then there exists constants $\kappa_0,\kappa_1,\cdots,\kappa_6$ in \eqref{piecedef} and $C>0$, such that the length satisfies
\begin{equation}\label{pieceest}
\|\gamma^t\|\leq C\|\gamma^0\|,
\end{equation}
where the constant $C$ depends only on $T$ and $E$.
\end{Theorem}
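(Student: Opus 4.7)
The plan is to combine the Gr\"onwall-type estimate of Lemma \ref{lem_est} (valid for smooth solutions) with the density result of Theorem \ref{thm_repath}, using the $(X,Y)$-coordinate representation \eqref{normXY} of the Finsler norm as the bridge that remains well-defined across the singularities of $u$. The first move is to reduce \eqref{pieceest} to paths of $\mathcal{C}^\infty$ solutions of the semilinear system \eqref{semi1}--\eqref{2.18}. Given any piecewise regular path $\gamma^t$, Theorem \ref{thm_repath} produces an approximating sequence $\gamma^t_n$ whose underlying solutions are $\mathcal{C}^\infty$ in $(X,Y,\theta)$ and converge uniformly on bounded sets in every $\mathcal{C}^k$ norm. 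Because each integrand $J_k^\theta, H_k^\theta$ in \eqref{normXY} is a smooth function of the semilinear variables and their first-order perturbations on the whole $(X,Y)$-plane — here the algebraic identity $\ell H/h=2[(1-h)L+\ell H]$ (and its symmetric counterpart $mG/g=2[(1-g)M+mG]$) is what keeps $J_3,J_5,H_3,H_5$ smooth across $h=0$ and $g=0$ — dominated convergence will transfer any uniform bound for $\gamma^t_n$ to $\gamma^t$ in the limit.

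Second, for an approximating smooth path I would work directly in the $(X,Y)$ picture. For each $\theta$, the solution $u^\theta(x,t)$ is smooth off a finite union of characteristic curves, and Lemma \ref{lem_est} applies on every open time interval free of singularities with a constant depending only on $T$ and on the conserved total energy $E$ (via the bounds \eqref{4.9} on $\int_0^T G_i(t)\,dt$ and the uniform bounds \eqref{Westimate} on $\mathcal{W}^\pm$). Rewriting the pointwise Gr\"onwall bound \eqref{est on w and z} in terms of the integrands $J_k^\theta, H_k^\theta$ on the level curve $\Gamma_t^\theta$ and the same weights $\kappa_0,\ldots,\kappa_6$ chosen according to the acyclic ordering of $\mathcal{F}_k^h$ in Fig.~\ref{f:wa36}, I obtain
\begin{equation*}
\frac{d}{dt}\sum_{k=0}^6\kappa_k\int_{\Gamma_t^\theta}\bigl(|J_k^\theta|\mathcal{W}^-\,dX+|H_k^\theta|\mathcal{W}^+\,dY\bigr)\;\leq\;a(t)\sum_{k=0}^6\kappa_k\int_{\Gamma_t^\theta}\bigl(|J_k^\theta|\mathcal{W}^-\,dX+|H_k^\theta|\mathcal{W}^+\,dY\bigr)
\end{equation*}
for a locally integrable $a(t)$ depending only on $T$ and $E$. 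Integrating from $0$ to $t$ and finally taking the infimum over relabelings of the $X$--$Y$ coordinates in Definition \ref{def_piece} (which can only decrease the length) yields \eqref{pieceest} for every fixed $\theta$ at which the whole time slab $[0,T]$ is free of non-generic singularities, i.e.\ for $\theta$ in the open dense set $[0,1]\setminus\{\theta_1,\ldots,\theta_N\}$.

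Third, I would verify that the length integrand in $\theta$ is continuous at each critical parameter $\theta_i$, so that integrating in $\theta$ preserves the bound across these finitely many values. This is precisely the payoff of working in $(X,Y)$: even when, at $\theta=\theta_i$, the solution $u^{\theta_i}(\cdot,t)$ develops cusp singularities at level $t$, the semilinear variables and their perturbations $(U,L,M,H,G,P,Q,\mathcal{X},\mathcal{T})$ remain $\mathcal{C}^\infty$ in $(X,Y,\theta)$. The implicit function theorem applied to $t^\theta$, whose $(X,Y)$ gradient has components $t_X=\alpha ph/(c_2-c_1)$ and $t_Y=\alpha qg/(c_2-c_1)$ (both nonnegative with $p,q>0$, and at least one strictly positive off the generic singular loci), shows that the level curve $\Gamma_t^\theta$ varies continuously — in fact piecewise $\mathcal{C}^1$ — in $\theta$. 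Combined with the smoothness of the $J_k^\theta,H_k^\theta$ and the uniform decay of $\mathcal{W}^\pm$, this gives continuity in $\theta$ of the line integral in \eqref{normXY}, hence of the overall length.

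The main obstacle I foresee is in the third step: the generic singular points of types (ii) and (iii) in the classification preceding Definition \ref{def_gensin} — the endpoints of singular curves where $h=\ell_X=0$ and the crossings where $h=g=0$ — are exactly the places where $t_X=t_Y=0$, so the level curve $\Gamma_t^\theta$ may pass through such points and the parametrization of $\Gamma_t^\theta$ degenerates there. Showing that these isolated points contribute zero mass to the line integral requires using the sharper non-degeneracy encoded in \eqref{generic_con} (namely $\ell_{XX}\neq 0$, $m_{YY}\neq 0$, or the transverse intersection in the crossing case) to argue that $\Gamma_t^\theta$ is a piecewise $\mathcal{C}^1$ curve away from a finite set of cusp points whose $(x,t)$-image is negligible. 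Once this local analysis is in place, the global estimate \eqref{pieceest} follows by combining the smooth-solution bound with continuity in $\theta$.
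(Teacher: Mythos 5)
Your outline has the right general shape (work in $(X,Y)$ coordinates, apply the smooth-solution Gr\"onwall estimate, handle singularities separately), but it misses the specific argument that carries the weight in the paper, and Step~1 is both unnecessary and a misreading of Theorem~\ref{thm_repath}. That theorem starts from a path that is already $\mathcal{C}^\infty$ in $(X,Y,\theta)$ and perturbs it so that the \emph{physical} solutions $u_n^\theta(x,t)$ have only \emph{generic} singularities; it does not produce solutions smooth in $(x,t)$, and a piecewise regular path already has the $\mathcal{C}^\infty$-in-$(X,Y,\theta)$ property on each $(\theta_{i-1},\theta_i)$ by Definition~\ref{def_repath}. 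The paper works directly with the given path: it reduces \eqref{pieceest} to the per-$\theta$ estimate \eqref{6.2}, which then needs to hold for $\theta$ off the finite set $\{\theta_1,\dots,\theta_N\}$; integrating over $\theta$ afterwards needs no continuity in $\theta$ since that bad set has measure zero.

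The genuine gap is in your Step~2. You say you would ``rewrite the pointwise Gr\"onwall bound \eqref{est on w and z} in terms of the integrands $J_k^\theta,H_k^\theta$ on $\Gamma_t^\theta$,'' but this is precisely what must be proved, not asserted: Lemma~\ref{lem_est} was derived for solutions smooth in $(x,t)$, and for a generic solution there are singular characteristic curves spanning essentially all of $[0,T]$, so there are no ``open time intervals free of singularities'' on which to apply it. The paper's Case~2 supplies the missing step. It shows that at times $t\neq t_j$ (away from the finitely many times when a type-(ii)/(iii) degeneracy occurs) the one-sided limits of $\frac{d}{dt}\sum_k\kappa_k\int_{\Gamma_t^\theta}(|J_k^\theta|\mathcal{W}^-dX+|H_k^\theta|\mathcal{W}^+dY)$ agree across a singular curve $\{h=0\}$, via the comparison curves $\Lambda_\varepsilon^\pm,\chi_\varepsilon^\pm$ and the observation that $|X_\varepsilon-X'_\varepsilon|=O(\varepsilon)$ while the integrands are continuous; the finitely many special times $t_j$ are then absorbed by $t$-continuity of the length integral. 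Your concluding ``obstacle'' correctly senses that the degenerate points where $t_X=t_Y=0$ need care, but it is aimed at $\theta$-continuity, which is not where the difficulty lies, and the $\Lambda_\varepsilon^\pm/\chi_\varepsilon^\pm$ matching of one-sided time derivatives across cusp curves --- the central technical content of Theorem~\ref{thm_length} --- is absent from your proposal.
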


\begin{proof}
Let the piecewise regular path $\theta\mapsto \big(u^\theta(t),u^\theta_t(t)\big)$ be given. From Definition \ref{def_repath}, for every $\theta\in[0,1]\backslash\{\theta_1,\cdots,\theta_N\}$,
 the solution $u^\theta(t)$ has generic regularities for $t\in[0,T]$. More specifically, $u^\theta$ is smooth in the $X$-$Y$ coordinates and piecewise smooth in the $x$-$t$ coordinates, hence the tangent vector is well-defined for all $\theta\in [0,1], t\in[0,T]$.

To prove \eqref{pieceest}, it suffices to show that
\begin{equation}\label{6.2}
\|\big(v^\theta,r^\theta,s^\theta\big)(t)\|_{(u^\theta,R^\theta,S^\theta)(t)}\leq C_1\|\big(v^\theta,r^\theta,s^\theta\big)(0)\|_{(u^\theta,R^\theta,S^\theta)(0)},
\end{equation}
 for $\theta\in[0,1]\backslash\{\theta_1,\cdots,\theta_N\}$,  here $C_1>$ is a constant depending only on $T$ and the upper bound of the total energy.
Indeed, according to Definition \ref{def_piece}, fix $\epsilon>0$ and choose a relabeling of the variables $X,Y$, such that, at time $t=0$, it holds that
\begin{equation*}
\int_0^1\Big\{\sum_{k=0}^6\kappa_k\int_{\Gamma_0^\theta}
\Big(|J_k^\theta|\mathcal{W}^-dX+|H_k^\theta|\mathcal{W}^+dY\Big)\Big\}\,d\theta
\leq \|\gamma^0\|+\epsilon.
\end{equation*}
Integrating \eqref{6.2} over $\theta\in[0,1]$, we have
\begin{equation*}
\|\gamma^t\|\leq C(\|\gamma^0\|+\epsilon),
\end{equation*}
which yields the desired estimate \eqref{pieceest} immediately, since $\epsilon>0$ is arbitrary.

To complete the proof we need to achieve the estimate \eqref{6.2}. Two cases can occur.

{\it CASE 1:} If $u^\theta$ is smooth in the $x$-$t$ coordinates, \eqref{6.2} follows directly from \eqref{normest}.

{\it CASE 2:} If $u^\theta$ is piecewise smooth with generic singularities. In this case, we claim that the appearance of the generic singularities will not affect the estimate \eqref{normest}. Toward this goal, we first observe that there exist at most finitely many points $W_j=(X_j,Y_j), j=1,\cdots,N$ such that the generic conditions \eqref{generic_con} hold when $t\in[0,T]$.
Moreover, for each time $t_j=t(X_j,Y_j)$ corresponding to the point $W_j$, the map
\begin{equation*}
t\mapsto\int_0^1\Big\{\sum_{k=0}^6\kappa_k\int_{\Gamma_t^\theta}
\Big(|J_k^\theta|\mathcal{W}^-dX+|H_k^\theta|\mathcal{W}^+dY\Big)\Big\}\,d\theta
\end{equation*}
is continuous. Hence the metric will not be impacted at (at most finitely) time $t=t_j$ when there exist singularities such that the generic conditions \eqref{generic_con} hold.

On the other hand, at time $t\neq t_j$,  to obtain the estimate \eqref{normest} it suffices to show that the time derivative
\begin{equation*}
\frac{d}{dt}\sum_{k=0}^6\kappa_k\int_{\Gamma_t^\theta}
\Big(|J_k^\theta|\mathcal{W}^-dX+|H_k^\theta|\mathcal{W}^+dY\Big)\end{equation*}
will not be affected by the presence of singularity. Indeed, assume that the solution has the generic singularities along a backward characteristic.
For a fixed time $\tau$ and denote $\Gamma_\tau:=\{(X,Y);~t^\theta(X,Y)$ $=\tau\}$. Let the point $(X_\varepsilon,Y_\varepsilon)$ be the intersection of the curve $\Gamma_{\tau-\varepsilon}=\{(X,Y);~ t^\theta(X,Y)=\tau-\varepsilon\}$ and the singular curve $\{(X,Y); ~h^\theta (X,Y)=0\}$, and the point $(X'_\varepsilon,Y'_\varepsilon)$ be the intersection of the curve $\Gamma_{\tau+\varepsilon}=\{(X,Y);~ t^\theta(X,Y)=\tau+\varepsilon\}$ and the singular curve $\{(X,Y); ~h^\theta (X,Y)=0\}$. In addition, define the curves
\begin{equation*}
\begin{cases}
\Lambda_\varepsilon^+:=\Gamma_{\tau+\varepsilon}\cap\{(X,Y);X\in [X'_\varepsilon,X_\varepsilon]\},\\
\Lambda_\varepsilon^-:=\Gamma_{\tau-\varepsilon}\cap\{(X,Y);X\in [X'_\varepsilon,X_\varepsilon]\},
\end{cases}
\quad
\begin{cases}
\chi_\varepsilon^+:=\Gamma_{\tau+\varepsilon}\cap\{(X,Y);Y\in [Y_\varepsilon,Y'_\varepsilon]\},\\
\chi_\varepsilon^-:=\Gamma_{\tau-\varepsilon}\cap\{(X,Y);Y\in [Y_\varepsilon,Y'_\varepsilon]\}.
\end{cases}
\end{equation*}
Then, it follows that
\begin{equation*}
\lim_{\varepsilon\to 0}\frac{1}{\varepsilon}\Big(\int_{\Lambda_\varepsilon^+}-\int_{\Lambda_\varepsilon^-}\Big)\sum_{k=0}^6 |J_k^\theta|\mathcal{W}^-\,dX=0,
\end{equation*}
\begin{equation*}
\lim_{\varepsilon\to 0}\frac{1}{\varepsilon}\Big(\int_{\chi_\varepsilon^+}-\int_{\chi_\varepsilon^-}\Big)\sum_{k=0}^6 |H_k^\theta|\mathcal{W}^+\,dY=0.
\end{equation*}
The first limit holds since each integrand is continuous  and $|X_\varepsilon -X'_\varepsilon|=O(\varepsilon)$. The second limit holds since each integrand is continuous  and $|Y'_\varepsilon -Y_\varepsilon|=O(\varepsilon)$. Consequently, \eqref{normest} follows even in the presence of singular curve where $h=0$. Similarity, we can obtain the same result in the presence of singular curve where $g=0$. This completes the proof of Theorem \ref{thm_length}.
\end{proof}

\section{Metric for general weak solutions}\label{sec_weak}
Aim of this section is to prove the main Theorem \ref{thm_metric}, by extending the Lipschitz metric to general weak solutions using the generic regularity result in section \ref{gen_sec}. Then we compare our metric with some familiar distance determined by various norms.

\subsection{Construction of geodesic distance}
In this part, we construct a geodesic distance $d(\cdot,\cdot)$ on the space $H^1(\mathbb{R})\times L^2(\mathbb{R})$ and prove the Lipschitz property.
For the sake of convenience, fix any constant $E>0$, we denote a set
\begin{equation*}
\Omega_E:=\{(u,u_t)\in H^1(\mathbb{R})\times L^{2}(\mathbb{R}); ~\mathcal{E}(u,u_t):=\int_\mathbb{R}[\alpha^2u_t^2+\gamma^2u_x^2]\,dx
\leq E\}.
\end{equation*}
Recall the generic regularity theorem \ref{thm_reg}, that is, there exists an open dense set of initial data $\mathcal{M}\subset \Big(\mathcal{C}^3(\mathbb{R})\cap H^1(\mathbb{R})\big)\times \big(\mathcal{C}^2(\mathbb{R})\cap L^2(\mathbb{R})\big)$, such that, for $(u_0,u_1)\in\mathcal{M}$, the conservative solution of \eqref{vwl} has only generic singularities. For future reference, we denote a set $$\mathcal{M}^\infty:=\mathcal{C}_0^\infty\cap\mathcal{M},$$ on which we define a geodesic distance by optimizing over all piecewise regular paths connecting two solutions of \eqref{vwl}. Then by the semilinear system \eqref{semi1}--\eqref{2.18} and Theorem \ref{thm_length}, we can extend this distance from space $\mathcal{M}^\infty$ to a larger space.

\begin{Definition}\label{def_piecepath}
For solutions with initial data in $\mathcal{M}^\infty\cap \Omega_E$, we define the geodesic distance $d\big((u,u_t), (\hat{u},\hat{u}_t)\big)$ as the infimum among the weighted lengths of all piecewise regular paths $\theta\mapsto (u^\theta,u_t^\theta)$, which connect $(u,u_t)$ with $(\hat{u},\hat{u}_t)$, that is, for any time $t$,
\begin{equation*}
\begin{split}
d\big((u,u_t),(\hat{u},\hat{u}_t)\big):=\inf \{ \|\gamma^t\|: &\gamma^t \text{ is a piecewise regular path}, \gamma^t(0)=(u,u_t),\\
& \gamma^t(1)=(\hat{u},\hat{u}_t), \mathcal{E}(u^\theta,u_t^\theta)\leq E, \text{ for all } \theta\in[0,1]\}.
\end{split}\end{equation*}
\end{Definition}

The definition $d(\cdot,\cdot)$ is indeed a distance because  after a suitable re-parameterization, the concatenation of two piecewise regular paths is still a piecewise regular path. Now, we can define the metric for the general weak solutions.

\begin{Definition}\label{def_weak}
Let $(u_0,u_1)$ and $(\hat{u}_0,\hat{u}_1)$ in $H^1(R)\times L^{2}(R)$ be two initial data as required in the existence and uniqueness Theorem \ref{thm_ec}. Denote $u$ and $\hat{u}$ to be the corresponding global weak solutions, then for any time $t$, we define,
\begin{equation*}
d\big((u,u_t),(\hat{u},\hat{u}_t)\big):=\lim_{n\rightarrow \infty}d\big((u^n,u_t^n),(\hat{u}^n,\hat{u}_t^n)\big),
\end{equation*}
for any two sequences of solutions $(u^n,u_t^n)$ and $(\hat{u}^n,\hat{u}_t^n)$ with the corresponding initial data in $\mathcal{M}^\infty\cap \Omega_E$, moreover
\[
\|(u^n_0-u_0, \hat{u}^n_0-\hat{u}_0)\|_{H^1}\rightarrow 0,\quad\hbox{and}\quad
\|(u^n_1-u_1, \hat{u}^n_1-\hat{u}_1)\|_{L^2}\rightarrow 0.
\]
\end{Definition}

We claim that the definition of this metric is well-defined. Indeed, the limit in the definition is independent on the selection of sequences because the solution with initial data in $\mathcal{M}^\infty\cap \Omega_E$ are Lipschitz continuous.

On the other hand, when
\[\|u^n_0-u_0\|_{H^1}\rightarrow 0, \quad \|u^n_1-u_1 \|_{L^2}\rightarrow 0,
\]
by the semi-linear equations \eqref{semi1}--\eqref{2.18},  we can get the corresponding solutions satisfy, for any $t>0$,
\[\|u^n-u\|_{H^1}\rightarrow 0,\quad \|u^n_t-u_t \|_{L^2}\rightarrow 0\]
Thus the Lipschitz property in Theorem \ref{thm_length} can be extended to the general solutions, this in turn yields the main theorem: Theorem \ref{thm_metric}.

\subsection{Comparison with other metrics}
Finally, by some calculations, we study the relations among our distance $d(\cdot,\cdot)$ and other types of metrics.

\begin{Proposition}[Comparison with the Sobolev metric]\label{Prop_sob}
For any two finite energy initial data $(u_0,u_1)$ and $(\hat{u}_0, \hat{u}_1) \in \mathcal M^\infty$, there exists some constant $C$ depends only on the initial energy, such that,
\begin{equation*}
d\big((u_0,u_1),(\hat{u}_0, \hat{u}_1)\big)\leq C\Big(\|u_0-\hat{u}_0\|_{H^1}+\|u_0-\hat{u}_0\|_{W^{1,1}}+
\|u_1-\hat{u}_1\|_{L^1}+\|u_{1}-\hat{u}_{1}\|_{L^2}\Big).
\end{equation*}
\end{Proposition}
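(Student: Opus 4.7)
The plan is to construct an explicit path connecting the two initial data and to estimate its weighted length at $t=0$, since by Definition \ref{def_piecepath} the geodesic distance $d$ is the infimum of such lengths over piecewise regular paths and any single admissible path yields an upper bound. First I would form the linear interpolation
\[
u_0^\theta := (1-\theta)u_0 + \theta\hat u_0, \qquad u_1^\theta := (1-\theta)u_1 + \theta\hat u_1, \qquad \theta\in[0,1].
\]
Because $(u_0,u_1),(\hat u_0,\hat u_1)\in\mathcal M^\infty\subset C_0^\infty$, the path lies in $C_0^\infty$ and, by the inequality $(a+b)^2\le 2a^2+2b^2$ applied to both $(u_0^\theta)_x$ and $u_1^\theta$, has total energy bounded uniformly in $\theta$ by some $E'$ depending only on the endpoint energies. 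Solving the semilinear system \eqref{semi1}--\eqref{2.18} from these smoothly $\theta$--dependent Cauchy data gives a smooth path in the sense of Theorem \ref{thm_repath}, which I can therefore approximate by a sequence of piecewise regular paths whose weighted lengths, expressed via \eqref{normXY}, converge to the length of the linear interpolation.

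Along this interpolation the vertical tangents at $t=0$ are $v=\hat u_0-u_0$ and $v_t=\hat u_1-u_1$, both independent of $\theta$. Differentiating the definitions $R=\alpha u_t+c_2 u_x$ and $S=\alpha u_t+c_1 u_x$ of \eqref{R-S} in $\theta$ yields
\[
r=\alpha v_t + \alpha_u u_t\, v + c_2 v_x + c_{2,u} u_x\, v, \qquad s=\alpha v_t + \alpha_u u_t\, v + c_1 v_x + c_{1,u} u_x\, v.
\]
In the infimum defining the Finsler norm \eqref{Finsler v} I would choose the trivial horizontal shifts $w\equiv 0\equiv z$ (admissible as initial data for \eqref{wz}). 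Then \eqref{rseq} gives $\hat r=r$ and $\hat s=s$, while the integrands of $I_0$, $I_1$, $I_4$, $I_5$ in \eqref{norm1} vanish identically at $t=0$; only $I_2$, $I_3$ and $I_6$ survive.

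To bound these three terms I would use the uniform facts $\mathcal W^\pm\le 1+CE'$ and $\|R\|_{L^2}^2+\|S\|_{L^2}^2\le CE'$ (which follow from \eqref{con} and the definition of the total energy), the coefficient bounds in \eqref{con}, and the Sobolev embedding $\|v\|_{L^\infty}\le C\|v\|_{H^1}$, giving
\begin{align*}
I_2 &\le C\int|v|(1+R^2+S^2)\,dx \le C\bigl(\|v\|_{L^1}+\|v\|_{L^\infty}(\|R\|_{L^2}^2+\|S\|_{L^2}^2)\bigr) \le C\bigl(\|v\|_{W^{1,1}}+\|v\|_{H^1}\bigr),\\
I_3 &\le C\int\bigl(|v_t|+|u_t||v|+|v_x|+|u_x||v|\bigr)\,dx \le C\bigl(\|v_t\|_{L^1}+\|v_x\|_{L^1}+\|v\|_{H^1}\bigr),\\
I_6 &\le C\int(|R|+|S|)\bigl(|v_t|+|u_t||v|+|v_x|+|u_x||v|\bigr)\,dx \le C\bigl(\|v_t\|_{L^2}+\|v\|_{H^1}\bigr),
\end{align*}
with constants depending only on $E'$ and the coefficient bounds. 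Since the estimates are uniform in $\theta$, integrating over $\theta\in[0,1]$ and summing over $k=2,3,6$ yields the inequality in the statement, upon first passing the bounds through the piecewise regular approximants and then taking the infimum over relabelings in Definition \ref{def_piecepath}.

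The main obstacle is technical rather than conceptual: one must verify that the $C^k$-convergence on compact subsets of the $(X,Y)$ plane furnished by Theorem \ref{thm_repath}(ii) actually transmits the bounds on $I_2,I_3,I_6$ to the approximants. This is where the compact support of the endpoint data is essential, since it localizes the support of both $v$ and the energy densities $R^2,S^2$ at $t=0$ to a bounded region in $(X,Y)$, so that the integrals in \eqref{normXY} reduce to integrals on bounded domains and the uniform convergence in Theorem \ref{thm_repath}(ii) suffices to pass to the limit.
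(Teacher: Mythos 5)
Your proof is correct in substance, but it takes a genuinely different route from the paper's. The paper interpolates the Riemann variables linearly, $R^\theta=\theta R+(1-\theta)\hat R$ and $S^\theta=\theta S+(1-\theta)\hat S$, and then recovers $u^\theta$ by solving the ODE $u^\theta_x=\frac{R^\theta-S^\theta}{(c_2-c_1)(x,u^\theta)}$; this gives the clean tangents $r^\theta=R-\hat R$, $s^\theta=S-\hat S$ (independent of $\theta$) at the price of a Gr\"onwall-type ODE estimate to control $v^\theta$ in $L^\infty$ by $\|R-\hat R\|_{L^1}+\|S-\hat S\|_{L^1}$. You instead interpolate the Cauchy data $u_0^\theta$, $u_1^\theta$ linearly, which gives the clean tangents $v=\hat u_0-u_0$, $v_t=\hat u_1-u_1$ at the price of $\theta$-dependent, coefficient-laden expressions for $r^\theta,s^\theta$. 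Both arguments then set $w=z=0$ and observe that only $I_2, I_3, I_6$ survive; your term-by-term bounds on these via Cauchy--Schwarz and the 1-D Sobolev embedding $\|v\|_{L^\infty}\lesssim\|v\|_{H^1}$ are correct and land precisely on the four norms in the statement. The advantages trade: your path avoids the implicit ODE reconstruction of $u^\theta$ (and the accompanying integrability concern that the Gr\"onwall exponent involves $\int|R^\theta-S^\theta|\,dx$), while the paper's path avoids having to re-expand $R-\hat R$, $S-\hat S$ in terms of differences of the original data. I note one point worth being explicit about: the paper's remark after the Proposition asserts that their $R,S$-linear path is ``totally different'' from the one in \cite{BC2015} because the general equation lacks the special structure of \eqref{vwe}; your argument shows that the naive linear interpolation of initial data still yields the stated upper bound here, so that remark should be read as a comparison with \cite{BC2015}'s specific construction rather than an obstruction to the data-linear path. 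Finally, your deferral of the approximation step to Theorem \ref{thm_repath}(ii) together with the compact-support localization is exactly what the paper also leaves implicit, so you are not missing anything the paper supplies.
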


\begin{proof}
To find an upper bound of this optimal transport metric, we only have to consider one path  $(u^\theta_0,u^\theta_1)$ connecting  $(u_0,u_1)$ and $(\hat u_0, \hat u_1)$, satisfying the  following conditions
\beq
R^\theta=\theta R+(1-\theta)\hat R,\qquad
S^\theta=\theta S+(1-\theta)\hat S.
\eeq
In fact, it is easy to use above equations to recover a unique path $(u^\theta, u^\theta_t)$; see \eqref{u_rec}. It is easy to check that the energy $\int (\tilde{R}^\theta)^2+(\tilde{S}^\theta)^2\,dx$ is bounded by the energies of $(u_0,u_1)$ and $(\hat u_0, \hat u_1)$.

Then we choose $w=z=0$, so the norm becomes
\begin{equation}
\label{norm_rec}
\begin{split}
\|(v^\theta, w^\theta, \hat r^\theta,z^\theta, \hat{s}^\theta)\|_{(u^\theta,R^\theta,S^\theta)}
&=\kappa_2\int_\mathbb{R}  \Big|v^\theta\Big|\big[(1+(R^\theta)^2)\, (\mathcal{W}^-)^\theta+(1+(S^\theta)^2)\, (\mathcal{W}^+)^\theta\big]\,dx\\
&\quad+\kappa_3\int_\mathbb{R}  \big[|r^\theta|\, (\mathcal{W}^-)^\theta
+|s^\theta|\, (\mathcal{W}^+)^\theta\big]\,dx \\
&\quad+\kappa_6\int_\mathbb{R} \Big[ \Big|2R^\theta r^\theta\Big|\, (\mathcal{W}^-)^\theta+
\Big|2S^\theta s^\theta\Big|\, (\mathcal{W}^+)^\theta\Big]\,dx.
\end{split}
\end{equation}

Now we come to estimate terms in the above equation. It is easy to see that
\beq\label{r_rec}
r^\theta=\frac{d}{d\theta} R^\theta=R-\hat R
\eeq
and
\beq\label{s_rec}
s^\theta=\frac{d}{d\theta} S^\theta=S-\hat S.
\eeq
Finally, we estimate $v^\theta$. First, by \eqref{R-S},
\beq\label{u_rec}
u^\theta_x=\frac{R^\theta-S^\theta}{(c_2-c_1)(x,u^\theta)}.
\eeq
Since the right hand side is Lipschitz on $u^\theta$ and $u^\theta$ has compact support, one can easily prove the existence and uniqueness of $u^\theta(x)$. So $v^\theta=\frac{d}{d\theta} u^\theta$ satisfies
\[
v^\theta_x=\frac{r^\theta-s^\theta}{(c_2-c_1)(x,u^\theta)}
-v^\theta\frac{R^\theta-S^\theta}{(c_2-c_1)^2(x,u^\theta)}(\partial_{u} c_2- \partial_{u} c_1)(x,u^\theta),
\]
then, using \eqref{r_rec} and \eqref{s_rec}, it is easy to see that
\beq\label{v_rec}
|v^\theta|\leq K\, (\| S-\hat S\|_{L^1}+\| R-\hat R\|_{L^1})
\eeq
for some constant $K$.

Using \eqref{norm_rec} and \eqref{r_rec}--\eqref{v_rec}, it is easy to prove this Proposition.
\end{proof}

Using the Lipschitz continuous dependence under Finsler norm, i.e. Theorem \ref{thm_metric}, this proposition tells that
\begin{equation*}
d\big((u,u_t)(t),(\hat{u}, \hat{u})\big)(t)\leq C\Big(\|u_0-\hat{u}_0\|_{H^1}+\|u_0-\hat{u}_0\|_{W^{1,1}}+
\|u_1-\hat{u}_1\|_{L^1}+\|u_{1}-\hat{u}_{1}\|_{L^2}\Big).
\end{equation*}
for any $t\geq0$.
The path used in the proof of Proposition \ref{Prop_sob} is totally different from the one used before in \cite{BC2015}, because in the general case we lose the special structure that variational wave equation holds. The following propositions can be proved in a similar way as in \cite{BC2015}. We add the proofs to make this paper self-contained.

\begin{Proposition}[Comparison with $L^1$ metric]
\label{Prop_L1}
For any solutions $u(t), \hat{u}(t)$ of system (\ref{vwl}) with initial data $u_0,\hat{u}_0\in H^1(\mathbb{R})\cap L^1(\mathbb{R})$ and $u_1,\hat{u}_1\in L^2(\mathbb{R})$, there exists some constant $C$ depends only on the upper bound for the total energy, such that,
\begin{equation}\label{L1}
\|u-\hat{u}\|_{L^1}\leq C\cdot d\Big((u,u_t)(t),(\hat{u},\hat{u}_t)(t)\Big).
\end{equation}
\end{Proposition}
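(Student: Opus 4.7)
The plan is to first establish \eqref{L1} for generic solutions connected by piecewise regular paths, and then to extend to general weak solutions via the density procedure in Definition \ref{def_weak}. At a fixed $t\geq 0$, let $\gamma^t:\theta\mapsto (u^\theta(t),u^\theta_t(t))$, $\theta\in[0,1]$, be any piecewise regular path joining $(u,u_t)(t)$ to $(\hat u,\hat u_t)(t)$ in the sense of Definition \ref{def_repath}, and write $v^\theta:=\partial_\theta u^\theta$ for the vertical tangent vector. Integrating $\hat u(t,\cdot)-u(t,\cdot)=\int_0^1 v^\theta(t,\cdot)\,d\theta$ in $x$ gives
$$\|\hat u(t)-u(t)\|_{L^1}\;\leq\;\int_0^1\|v^\theta(t)\|_{L^1}\,d\theta,$$
so the heart of the matter is to dominate $\|v\|_{L^1}$ by the Finsler norm $\|(v,r,s)\|_{(u,R,S)}$ uniformly, with a constant depending only on the coefficient bounds \eqref{con}.

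For the latter estimate, for any admissible $(w,z,\hat r,\hat s)$ I would use the triangle inequality
$$|v|\;\leq\;\Big|v+\frac{Rw-Sz}{c_2-c_1}\Big|+\frac{|R||w|+|S||z|}{c_2-c_1},$$
combined with $2|R|\leq 1+R^2$, $2|S|\leq 1+S^2$, $\mathcal W^\pm\geq 1$, and the positive lower bound of $c_2-c_1$ coming from \eqref{con}. The first term on the right, integrated against $dx$, is controlled by $I_2/\kappa_2$ from \eqref{norm1}, while the remaining pieces are absorbed into $I_1/\kappa_1$; altogether
$$\int_{\mathbb R}|v|\,dx\;\leq\;C\bigl(I_1+I_2\bigr)\;\leq\;C\,\|(v,w,\hat r,z,\hat s)\|_{(u,R,S)},$$
with $C$ depending only on the coefficient bounds. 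Taking the infimum over admissible $(w,z,\hat r,\hat s)$ yields $\|v\|_{L^1}\leq C\,\|(v,r,s)\|_{(u,R,S)}$; integrating in $\theta$ and infimizing over piecewise regular paths in $\Omega_E$ then produces
$$\|\hat u(t)-u(t)\|_{L^1}\;\leq\;C\,d\bigl((u,u_t)(t),(\hat u,\hat u_t)(t)\bigr)$$
for generic solutions.

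To extend this to general weak solutions with $u_0,\hat u_0\in H^1\cap L^1$ and $u_1,\hat u_1\in L^2$, I would approximate the data by sequences $(u_0^n,u_1^n),(\hat u_0^n,\hat u_1^n)\in\mathcal M^\infty\cap\Omega_E$ as in Definition \ref{def_weak}. By stability of the semilinear system \eqref{semi1}--\eqref{2.18} and finite speed of propagation, the corresponding solutions converge in $L^1_{\mathrm{loc}}$ at time $t$; combined with the $L^1$ hypothesis on the initial data and the uniform energy bound, one should obtain $L^1$ convergence on all of $\mathbb R$, which allows passing to the limit on both sides of the inequality for generic solutions. I expect the density step to be the main technical obstacle, since one must simultaneously arrange $H^1\times L^2$ convergence (so that $d$ converges by Definition \ref{def_weak}) and $L^1$ convergence (so that the left-hand side converges); the elementary Finsler-norm bound in the main step should cause no trouble once the correct weights are identified.
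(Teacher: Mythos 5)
Your proposal matches the paper's proof step for step: the same triangle-inequality decomposition $|v|\leq|v+\tfrac{Rw-Sz}{c_2-c_1}|+\tfrac{|Rw|+|Sz|}{c_2-c_1}$, the same bounds $2|R|\leq 1+R^2$, $2|S|\leq 1+S^2$ and $\mathcal W^\pm\geq 1$ to absorb the terms into $I_1$ and $I_2$, then the infimum over admissible shifts and over regular paths. Your final paragraph raises the density/extension step more carefully than the paper does (the paper simply invokes Definition \ref{def_weak}), but this is an elaboration of the same argument, not a different route.
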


\begin{proof}  Let $\gamma^t:\theta\mapsto \big(u^\theta(t),u^\theta_t(t)\big)$ be a regular path connecting $u(t)$ with $\hat{u}(t)$. By direct calculations, we obtain
\begin{equation*}
\begin{split}
|v|&=\Big|v+\frac{Rw-Sz}{c_2-c_1}-\frac{Rw-Sz}{c_2-c_1}\Big|\leq \Big|v+\frac{Rw-Sz}{c_2-c_1}\Big|+\Big|\frac{Rw}{c_2-c_1}\Big|
+\Big|\frac{-Sz}{c_2-c_1}\Big|\\
&\leq \Big|v+\frac{Rw-Sz}{c_2-c_1}\Big|+\frac{|w|(1+R^2)}{2(c_2-c_1)}+\frac{|z|(1+S^2)}{2(c_2-c_1)},
\end{split}\end{equation*}
which, in combination with the definition \ref{def_weak} and \eqref{norm1}, shows that
\begin{equation*}
\begin{split}
d\Big((u,u_t)(t),(\hat{u},\hat{u}_t)(t)\Big)&\geq C\inf_{\gamma^t}\int_0^1\int_{\mathbb{R}} |v^\theta|\,dx\,d\theta\\
&=C\inf_{\gamma^t}\int_0^1\int_{\mathbb{R}} \big|\frac{d u^\theta}{d\theta}\big|\,dx\,d\theta\geq C\|u-\hat{u}\|_{L^1}.
\end{split}
\end{equation*}
The estimate \eqref{L1} follows from the above argument and we thus complete the proof of Proposition \ref{Prop_L1}.
\end{proof}

\begin{Proposition}[Comparison with the Kantorovich-Rubinstein metric]\label{Prop_KR}
For any solutions $u(t),$ $\hat{u}(t)$ of system (\ref{vwl}) with initial data $u_0,\hat{u}_0\in H^1(\mathbb{R})$ and $u_1,\hat{u}_1\in L^2(\mathbb{R})$, there exists some constant $C$ depends only on the upper bound for the total energy, such that,
\begin{equation}\label{KR}
\sup_{\|f\|_{\mathcal{C}^1}\leq 1}\left|\int f \,d\mu-\int f\,d\hat{\mu}\right|\leq C \cdot d\Big((u,u_t)(t),(\hat{u},\hat{u}_t)(t)\Big),
\end{equation}
where $\mu,\hat{\mu}$ are the measures with densities $\alpha^2(x,u) u_t^2+\gamma^2(x,u) u_x^2$ and $\alpha^2(\hat{x},\hat{u}) \hat{u}_t^2+\gamma^2(\hat{x},\hat{u}) \hat{u}_x^2$ with respect to the Lebesgue measure.
The metric (\ref{KR}) is usually called a Kantorovich-Rubinstein distance, which is equivalent to a Wasserstein distance by a duality theorem \cite{V}.
\end{Proposition}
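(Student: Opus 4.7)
My plan is to follow the strategy of Propositions~\ref{Prop_sob} and \ref{Prop_L1}: bound $|\int f\,d\mu-\int f\,d\hat\mu|$ along an arbitrary piecewise regular path and then take infima. Fix $f\in C^1(\mathbb R)$ with $\|f\|_\infty,\|f_x\|_\infty\le 1$, and let $\gamma^t:\theta\mapsto(u^\theta(t),u_t^\theta(t))$ be a piecewise regular path joining $(u,u_t)$ to $(\hat u,\hat u_t)$ with tangent vectors $(v^\theta,r^\theta,s^\theta)$. Writing $\tilde R^2=\phi(x,u)R^2$ and $\tilde S^2=\psi(x,u)S^2$ with the smooth bounded coefficients $\phi=-c_1/(c_2-c_1)$ and $\psi=c_2/(c_2-c_1)$ (which satisfy $\phi+\psi=1$), the fundamental theorem of calculus yields
\begin{equation*}
\int f\,d\mu-\int f\,d\hat\mu=\int_0^1\!\!\int f\big[\phi_u v(R^2-S^2)+2\phi R\,r+2\psi S\,s\big]\,dx\,d\theta.
\end{equation*}
It thus suffices to show that for each $\theta$ and \emph{every} admissible shift pair $(w,z)$, the inner integrand is dominated in absolute value by $C$ times the cost-function density in \eqref{norm1}; a bound by $C\|(v^\theta,r^\theta,s^\theta)\|_{(u^\theta,R^\theta,S^\theta)}$ will then follow from the Finsler norm being an infimum.

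For the bilinear term $\int 2f\phi R\,r\,dx$, I would substitute $r=\hat r-wR_x-\Delta_r$ from \eqref{rseq} (with $\Delta_r$ the relative-shift remainder) and integrate by parts through $2RR_xw=(R^2w)_x-R^2w_x$, yielding
\begin{equation*}
\int 2f\phi Rr\,dx=\int f\phi(2R\hat r+R^2w_x)\,dx+\int(f_x\phi+f\phi_x+f\phi_u u_x)wR^2\,dx-\int 2f\phi R\,\Delta_r\,dx.
\end{equation*}
Adding and subtracting the correction $\tfrac{2a_1(w-z)}{c_2-c_1}R^2S$ identifies the first integrand with $f\phi$ times the signed $J_6^-$-expression, giving a piece bounded by $C\int J_6^-\mathcal W^-\,dx$ plus this correction. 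The $f_x\phi+f\phi_x$ portion of the second integral contributes $C\int|w|R^2\,dx\le C\int J_1^-\mathcal W^-\,dx$ (using $|w|R^2\le|w|(1+R^2)$ and $\mathcal W^-\ge 1$); the remaining $f\phi_u u_x wR^2$ portion, together with the $\Delta_r$ term and the subtracted $J_6^-$-correction, produces a finite list of cross-terms of the schematic form $\int(|w|+|z|)(1+|R|+|S|+|RS|+R^2|S|+|R|S^2+|R|^3)\,dx$, which I would absorb into $J_0^\pm$--$J_5^\pm$ via the elementary splittings $|R|\le\tfrac12(1+R^2)$, $|S|\le\tfrac12(1+S^2)$, $2|RS|\le R^2+S^2$, $R^2|S|\le\tfrac12R^2(1+S^2)$, combined with the uniform bound $1\le\mathcal W^\pm\le C_0(E)$. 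The residual term $\int f\phi_u v(R^2-S^2)\,dx$ is handled as in the proof of Proposition~\ref{Prop_L1}, decomposing $v=\big(v+\tfrac{Rw-Sz}{c_2-c_1}\big)-\tfrac{Rw-Sz}{c_2-c_1}$ and absorbing the first piece into $J_2^\pm$ and the second into $J_1^\pm$. A symmetric treatment of $\int 2f\psi S\,s\,dx$ closes the estimate.

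Putting everything together gives $|\tfrac{d}{d\theta}\int f\,d\mu^\theta|\le C\|(v^\theta,r^\theta,s^\theta)\|_{(u^\theta,R^\theta,S^\theta)}$. Integrating in $\theta$, taking the infimum over piecewise regular paths via Definitions~\ref{def_piecepath}--\ref{def_weak}, and taking the supremum over $\|f\|_{C^1}\le 1$ delivers \eqref{KR}. The principal difficulty I anticipate is the bookkeeping of cross-terms produced by the $(w,z)$-dependent integration by parts---in particular the $|w|R^3$-type terms arising from $\phi_u u_x=\phi_u(R-S)/(c_2-c_1)$, which must be redistributed between $J_1^-$, $J_3^-$ and $J_5^-$ using AM-GM-type splittings together with the uniform boundedness of $\mathcal W^\pm$. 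A subtler conceptual point is that the pointwise bound on $|\tfrac{d}{d\theta}\int f\,d\mu^\theta|$ must be established for every admissible $(w,z)$, not merely at $(w,z)=(0,0)$: the Finsler norm is defined as an infimum, so bounding only $N(0,0)$ would give a quantity strictly larger than $\|(v,r,s)\|$ and would fall short of controlling KR by $d$.
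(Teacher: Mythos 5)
Your overall strategy (differentiate $\int f\,d\mu^\theta$ along a piecewise regular path, integrate by parts to convert $R_x w$ into $w_x$, re-express in terms of $\hat r,\hat s$, $v+\tfrac{Rw-Sz}{c_2-c_1}$, and the relative-shift corrections, and then argue for \emph{every} admissible $(w,z)$ so that the infimum defining the Finsler norm is controlled) is the same as the paper's, and the remark that one must bound $N(w,z)$ rather than $N(0,0)$ is exactly right. However, your treatment of the cubic cross-terms contains a genuine gap.

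You propose to absorb terms of the schematic size $\int(|w|+|z|)|R|^3\,dx$ (and symmetrically $|S|^3$) ``via AM-GM-type splittings together with the uniform boundedness of $\mathcal W^\pm$, redistributing between $J_1^-$, $J_3^-$, $J_5^-$.'' This cannot work: none of the densities $J_0^-,\dots,J_6^-$ (nor their forward analogues) dominates $|w|R^3$, and AM-GM makes it worse, e.g.\ $|w|R^3\le\tfrac12|w|R^2+\tfrac12|w|R^4$. Since $R$ is not pointwise bounded, $\int|w|R^3\,dx$ is not controlled by $\sum_k\int J_k^-\mathcal W^-\,dx$ with uniform constants, and the uniform bound $1\le\mathcal W^\pm\le 1+E$ does not supply a pointwise $R^2$ or $S^2$ weight. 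The same objection applies to the $|z|S^3$ terms.

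The correct resolution, which the paper's computation makes transparent, is that these cubic terms \emph{cancel} rather than being absorbed. Your decomposition produces $\phi_u u_x wR^2$ from the boundary term $(f\phi)_x$ and, separately, the piece $-\phi_u\tfrac{Rw-Sz}{c_2-c_1}(R^2-S^2)$ from decomposing $\phi_u v(R^2-S^2)$. These must be combined \emph{before} estimating, using
\begin{equation*}
\phi_u v R^2 + \phi_u u_x\, w\, R^2 = \phi_u R^2\bigl(v+u_x w\bigr)
= \phi_u R^2\Bigl[\Bigl(v+\tfrac{Rw-Sz}{c_2-c_1}\Bigr)+\tfrac{S(z-w)}{c_2-c_1}\Bigr],
\end{equation*}
and symmetrically with $z$ and $S^2$ on the forward side. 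The $|R|^3 w$ contribution of $\phi_u u_x w R^2$ carries sign opposite to the $|R|^3 w$ contribution of $-\phi_u\tfrac{Rw}{c_2-c_1}R^2$, so they cancel identically; what survives is a relative-shift term of the size $\phi_u R^2 S\tfrac{z-w}{c_2-c_1}$ (and $\phi_u RS^2$), i.e.\ at most quadratic in $(R,S)$ times $|w-z|$, exactly as in the third integral of the paper's estimate. Those surviving quadratic cross-terms in turn cancel between the backward and forward contributions after adding and subtracting the $J_6^\pm$ corrections (one checks that the $|w-z|R^2S$, $|w-z|RS^2$ and $|w-z|RS$ coefficients from $\phi\Delta_r$, $\psi\Delta_s$, $\phi_u$, $\psi_u$ sum to zero, using $\phi_u=\tfrac{2a_1c_2-2a_2c_1}{c_2-c_1}$, $\psi_u=-\phi_u$). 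Your plan does not make this structure visible because it treats $\phi_u v(R^2-S^2)$ and the $\phi_u u_x$ boundary term independently, and it therefore leaves uncancelled $|R|^3$ and $|S|^3$ terms which are genuinely outside the reach of the Finsler cost.

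In short: right strategy and right decomposition, but the claim that the cubic cross-terms are absorbable by AM-GM is false; they must be shown to cancel, which requires grouping the $\phi_u v$ term with the $\phi_u u_x w$ (and $\phi_u u_x z$) boundary terms before estimating.
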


\begin{proof}
Assume that $\gamma^t:\theta\mapsto \big(u^\theta(t),u^\theta_t(t)\big)$ is a regular path connecting $u(t)$ with $\hat{u}(t)$, and denote  $\mu^\theta$  be the measure with density $\alpha^2(x^\theta,u^\theta) (u^\theta_t)^2+\gamma^2(x^\theta,u^\theta) (u^\theta_x)^2=(\tilde{R}^\theta)^2+(\tilde{S}^\theta)^2$ with respect to the Lebesgue measure, then for any function $f$ with $\|f\|_{\mathcal{C}^1}\leq 1$, it holds that
\begin{equation*}
\begin{split}
&\left|\int_0^1\frac{d}{d\theta}\int f\,d\mu^\theta\,d\theta \right| \leq C\int_0^1\int_{\mathbb{R}} \Big(|w^\theta|(1+(R^\theta)^2)+|z^\theta|(1+(S^\theta)^2)\Big)\,dx\,d\theta\\
&\quad+C\int_0^1\int_{\mathbb{R}} |f|\cdot \big|v^\theta+\frac{R^\theta w^\theta-S^\theta z^\theta}{c_2-c_1}\big|\big[(1+(R^\theta)^2)+(1+(S^\theta)^2)\big]\,dx\,d\theta
\\
&\quad+\int_0^1\int_{\mathbb{R}}|f|\cdot\Big|\frac{-c_1}{c_2-c_1}
\big(2R^\theta(r^\theta+R^\theta_xw^\theta)+(R^\theta)^2w_x^\theta\big)+(R^\theta)^2S^\theta\frac{z^\theta-w^\theta}{c_2-c_1}\partial_u\big(\frac{-c_1}{c_2-c_1}\big)\\
&\quad +\frac{c_2}{c_2-c_1}
\big(2S^\theta(s^\theta+S^\theta_xz^\theta)+(S^\theta)^2z_x^\theta\big) +R^\theta(S^\theta)^2\frac{z^\theta-w^\theta}{c_2-c_1}\partial_u\big(\frac{c_2}{c_2-c_1}\big)\Big|\,dx\,d\theta\\
 &\leq C\int_0^1\int_{\mathbb{R}} \Big\{ \big|v^\theta+\frac{R^\theta w^\theta-S^\theta z^\theta}{c_2-c_1}\big|\big[(1+(R^\theta)^2)+(1+(S^\theta)^2)\big]
 +|w^\theta|(1+(R^\theta)^2)\\
&\quad\qquad\qquad+|z^\theta|(1+(S^\theta)^2)+\big|2R^\theta\hat{r}^\theta +(R^\theta)^2w^\theta_x+\frac{2a_1(w^\theta-z^\theta)}{c_2-c_1}(R^\theta)^2S^\theta\big|
\\
&\quad\qquad\qquad+\big|2S^\theta\hat{s}^\theta+(S^\theta)^2z^\theta_x-\frac{2a_2(w^\theta-z^\theta)}{c_2-c_1}R^\theta (S^\theta)^2\big|\Big\}\,dx\,d\theta,
\end{split}\end{equation*}
where we have used the fact that $\partial_u\big(\frac{-c_1}{c_2-c_1}\big)=\frac{2a_1c_2-2a_2c_1}{c_2-c_1}$ and $\partial_u\big(\frac{c_2}{c_2-c_1}\big)=\frac{2a_2c_1-2a_1c_2}{c_2-c_1}$. Hence, we get estimate \eqref{KR} immediately. This completes the proof of Proposition \ref{Prop_KR}.
\end{proof}

\section*{Acknowledgments}
The first author is partially supported by the National Natural Science Foundation of China (No. 11801295), and the Shandong Provincial Natural Science Foundation, China (No. ZR2018BA008). The second author is partially
supported by NSF with grants DMS-1715012 and DMS-2008504.



\begin{thebibliography}{99}
 \bibitem{AH2007} G. Ali and J. K. Hunter, Diffractive nonlinear geometrical optics for variational wave equations and the Einstein equations, {\it Comm. Pure Appl. Math.} {\bf 60} (2007), 1522--1557.

 \bibitem{AH2009} G. Ali and J. K. Hunter, Orientation waves in a director field with rotational inertia, {\it Kinet. Relat. Models} {\bf 2}(1) (2009), 1--37.



\bibitem{Bloom} J.~M.~Bloom,
The local structure of smooth maps of manifolds, B.A. Thesis, Harvard University,
Cambrige, MA, 2004.



\bibitem{BC}
A. Bressan and G. Chen,  Generic regularity of conservative solutions to a nonlinear wave equation, {\it  Ann. Inst. H.~Poincar\'e Anal. Non Lin\'eaire} {\bf 34} (2) (2017), 335--354.

\bibitem{BC2015}
A. Bressan and G. Chen, Lipschitz metrics for a class of nonlinear wave equations,  {\it Arch. Ration. Mech. Anal.} {\bf 226}(3) (2017), 1303--1343.


\bibitem{BCZ} A. Bressan, G. Chen and Q. Zhang,
Unique conservative solutions to a variational wave equation, {\it Arch. Ration. Mech. Anal.} {\bf 217} (3) (2015), 1069--1101.

\bibitem{BF}
A.~Bressan and M.~Fonte, An optimal transportation metric for solutions of the
Camassa-Holm equation, {\it Methods and Applications of Analysis},
{\bf 12} (2005), 191--220.





\bibitem{BH}A.~Bressan and T.~Huang,
Representation of dissipative solutions to a nonlinear variational wave equation,
{\it Comm. Math. Sci.} {\bf 14} (2016), 31--53.

\bibitem{BHY}
A.~Bressan, T.~Huang  and F.~Yu,
Structurally stable singularities for a nonlinear wave equation, {\it Bull. Inst. Math.
Acad. Sinica.} {\bf 10}(4) (2015),449--478.


\bibitem{BHR}
A. Bressan, H. Holden, and X. Raynaud, Lipschitz metric for the Hunter-Saxton equation, {\it J. Math. Pures Appl.}, {\bf 94} (2010), 68--92.

 \bibitem{BZ}
A.~Bressan and Y.~Zheng,
Conservative solutions to a nonlinear variational wave equation,
{\it Comm. Math. Phys.} {\bf 266} (2006), 471--497.


\bibitem{CCCS} H. Cai, G, Chen, R. M. Chen and Y. Shen, Lipschitz metric for the Novikov equation, {\em Arch. Ration. Mech. Anal.} {\bf 229} (3) (2018), 1091--1137.

\bibitem{CCDS} H. Cai, G, Chen, Y. Du and Y. Shen, Uniqueness of conservative solutions to a one-dimensional general quasilinear wave equation through variational principle, submitted, available at arXiv:2007.14582.

\bibitem{CCD} H. Cai, G. Chen and Y. Du, Uniqueness and regularity of conservative solution to a wave system modeling nematic liquid crystal, {\it J. Math. Pures Appl.} {\bf 117} (2018), 185--220.

\bibitem{CGH} J. A. Carrillo, K. Grunert and H. Holden, A Lipschitz metric for the Camassa-Holm equation,
Forum of Mathematics, Sigma (2020) Vol. 8, e27, 292 pp. 

\bibitem{CHL} G. Chen, T. Huang and W. Liu, Poiseuille flow of nematic liquid crystals via the full Ericksen-Leslie model, {\it Arch. Ration. Mech. Anal.}
{\bf 236} (2020), 839--891.

\bibitem{CS} G. Chen and Y. Shen, Existence and regularity of solutions in nonlinear wave equations, {\it Discrete Contin. Dyn. Syst., Series A}, {\bf 35}(8) (2015), 3327-3342. 


 \bibitem{CZZ} G. Chen, P. Zhang and Y. Zheng,
 Conservation solutions to a system of variational wave equations of nematic liquid crystals,
{\it Commun. Pure Appl. Anal.} {\bf 12}(3) (2013) 1445--1468.

\bibitem{DG} R. Donninger, I. Glogic, On the existence and stability of blowup for wave maps into a negatively curved target. {\it Anal. PDE} {\bf 12}(2) (2019), 389--416.




\bibitem{GHZ} R.T. Glassey, J.K. Hunter and Y. Zheng, Singularities in a nonlinear variational wave equation. {\it J. Differential. Equations}
{\bf 129} (1996),  49--78.


\bibitem{GG} M. Golubitsky and V. Guillemin, Stable Mappings and Their Singularities, Graduate Texts
in Mathematics, 14, Springer-Verlag, New York, 1973.


\bibitem{GHR} K. Grunert, H. Holden and X. Raynaud, Lipschitz metric for the Camassa-Holm
equation on the line, {\it Discrete Contin. Dyn. Syst.} {\bf 33} (2013), 2809--2827.

\bibitem{HR} H.~Holden and X.~Raynaud, 
Global semigroup of conservative solutions of the nonlinear variational wave equation.
{\em Arch. Ration. Mech. Anal.} {\bf 201} (2011),  871-964.

\bibitem{H} Y. B. Hu, Conservative solutions to a one--dimensional nonlinear variational wave equation, {\it J. Differential Equations} {\bf 259} (2015), 172--200.




\bibitem{JL} J. Jendrej, A. Lawrie,
Two-bubble dynamics for threshold solutions to the wave maps equation.
{\em Invent. Math.} {\bf 213} (3) (2018),  1249--1325.

 \bibitem{L} F. M. Leslie,
  Theory of Flow Phenomena in Liquid Crystals.
{\em Advances in Liquid Crystals}, Vol. 4, 1-81.
 Academic Press, New York, 1979.

 \bibitem{RS}
I. Rodnianski and J. Sterbenz, On the formation of singularities in the critical $O(3)$ $\sigma$-model, {\em Ann. of Math.} {\bf 172} (2010), 187--242.



 \bibitem{V} C. Villani, {\it Topics in Optimal Transportation}. American Mathematical Society, Providence, 2003.



\bibitem{ZZ03} P. Zhang and Y. Zheng,
Weak solutions to a nonlinear variational wave equation.
{\em Arch. Ration. Mech. Anal.} {\bf 166} (2003), 303--319.

\bibitem{ZZ10} P. Zhang and Y. Zheng, Conservative solutions to
a  system of variational wave equations of nematic liquid crystals.
{\em Arch. Ration. Mech. Anal.} {\bf 195} (2010), 701--727.

\bibitem{ZZ11} P. Zhang and Y. Zheng,
Energy conservative solutions to a one-dimensional full variational wave system.
{\em Comm. Pure Appl. Math.} {\bf 55} (2012), 582--632.

\end{thebibliography}
\end{document}